\documentclass[11pt,a4paper,reqno]{amsart}
\usepackage[hmargin=2.5cm,bmargin=2.5cm,tmargin=3cm]{geometry}
\usepackage{enumerate}

\usepackage{amsmath,amsthm,amssymb,amsfonts,latexsym}

\usepackage[colorlinks, linkcolor=blue, filecolor=blue, citecolor = olive, urlcolor=purple]{hyperref}

\usepackage[german,english]{babel}
\usepackage{orcidlink}

\usepackage[normalem]{ulem}
\usepackage{latexsym}
\usepackage{float,caption}

\usepackage{float}
\usepackage{tikz}					
\usetikzlibrary{decorations.pathreplacing}

\newcommand{\degout}{\deg_{\mathrm{out}}}

\DeclareMathOperator{\supp}{supp}

\DeclareMathOperator{\diam}{diam}

\DeclareMathOperator{\diag}{diag}

\DeclareMathOperator{\aufspan}{span}

\allowdisplaybreaks

\newcommand{\Graph}{{\mathcal G}}

\newcommand{\Ra}{\rightarrow}

\newcommand{\C}{\mathbb{C}}

\newcommand{\N}{\mathbb{N}}

\newcommand{\Z}{\mathbb{Z}}

\usepackage{aliascnt}

\theoremstyle{plain}
\newtheorem{theo}{Theorem}[section]

\newaliascnt{cor}{theo}
\newaliascnt{prop}{theo}
\newaliascnt{lemma}{theo}

\newtheorem{lemma}[lemma]{Lemma}
\newtheorem{prop}[prop]{Proposition}
\newtheorem{cor}[cor]{Corollary}

\aliascntresetthe{cor}
\aliascntresetthe{prop}
\aliascntresetthe{lemma}

\theoremstyle{definition} 

\newaliascnt{defi}{theo}
\newaliascnt{assum}{theo}
\newaliascnt{assums}{theo}
\newaliascnt{prob}{theo}

\newtheorem{defi}[defi]{Definition}
\newtheorem{assum}[assum]{Assumption}

\aliascntresetthe{defi}
\aliascntresetthe{assum}
\aliascntresetthe{assums}
\aliascntresetthe{prob}

\theoremstyle{remark}

\newaliascnt{rems}{theo}
\newaliascnt{rem}{theo}
\newaliascnt{exa}{theo}
\newaliascnt{exs}{theo}

\newtheorem{rem}[rem]{Remark}
\newtheorem{exa}[exa]{Example}

\aliascntresetthe{rems}
\aliascntresetthe{rem}
\aliascntresetthe{exa}
\aliascntresetthe{exs}

\numberwithin{equation}{section}
\numberwithin{figure}{section}
\numberwithin{lemma}{section}

\newcommand{\be}{\begin{equation}}
\newcommand{\ee}{\end{equation}}
\newcommand{\bea}{\begin{eqnarray*}}
	\newcommand{\eea}{\end{eqnarray*}}
\newcommand{\beq}{\begin{eqnarray}}
\newcommand{\eeq}{\end{eqnarray}}

\newcommand{\mG}{\mathsf G}
\newcommand{\mV}{\mathsf V}

\newcommand{\mw}{\mathsf w}
\newcommand{\mv}{\mathsf v}
\newcommand{\mE}{\mathsf E}
\newcommand{\me}{\mathsf e}

\newcommand{\mLn}{\mathsf{L}_n}
\newcommand{\mP}{\mathsf{P}}

 \def\mG{\mathsf{G}}

 \def\mV{\mathsf{V}}
 \def\mU{\mathsf{U}}
 \def\mE{\mathsf{E}}

 \def\mP{\mathsf{P}}
 \def\mK{\mathsf{K}}
 \def\mS{\mathsf{S}}
 \def\mC{\mathsf{C}}
 
 \def\mL{\mathsf{L}}

 \def\mv{\mathsf{v}}
 \def\me{\mathsf{e}}
 \def\mw{\mathsf{w}}
 \def\mz{\mathsf{z}}
 \def\mW{\mathsf{W}}

\DeclareMathOperator{\inr}{inr}

\newcommand{\lSP}{\ell_\nu^2}

\newcommand{\dForm}{\mathcal Q} 
\newcommand{\dLapl}{\mathcal L} 
\newcommand{\dLaplD}{\mathcal L^{\mathrm{D}}_{\mV_0}} \newcommand{\dLaplN}{\mathcal L^{\mathrm{N}}_{\mV_0}} \newcommand{\dLaplB}{\mathcal L^{\mathrm{B}}_{\mV_0}} 
\newcommand{\dFormD}{\mathcal Q^{\mathrm{D}}_{\mV_0}} \newcommand{\dFormN}{\mathcal Q^{\mathrm{N}}_{\mV_0}} \newcommand{\dFormB}{\mathcal Q^{\mathrm{B}}_{\mV_0}} 
\newcommand{\llD}{\lambda_1^{\mathrm{D}}} \newcommand{\llN}{\lambda_2^{\mathrm{N}}} \newcommand{\llB}{\lambda_2^{\mathrm{B}}}

\newcommand{\natExt}[1]{E^{\mathrm{D}}_{#1}}
\newcommand{\neuExt}[1]{E^{\mathrm{N}}_{#1}}

\newcommand{\parti}{{\mathcal P}}

\newcommand{\DirForm}[1]{\dForm^{\mathrm{D}}_{#1}} 
 
\newcommand{\BouForm}[1]{\dForm^{\mathrm{B}}_{#1}} 
 
\newcommand{\NeuForm}[1]{\dForm^{\mathrm{N}}_{#1}}

\newcommand{\benergy[1]}{{\Lambda}^{\mathrm{B}}_{#1}} 

\newcommand{\nenergy[1]}{{\Lambda}^{\mathrm{N}}_{#1}} 
\newcommand{\denergy[1]}{{\Lambda}^{\mathrm{D}}_{#1}} 
\newcommand{\noptenergy[1]}{{\mathfrak L}^{\mathrm{N}}_{#1}} 
\newcommand{\boptenergy[1]}{{\mathfrak L}^{\mathrm{B}}_{#1}} 
\newcommand{\doptenergy[1]}{{\mathfrak L}^{\mathrm{D}}_{#1}} 
\newcommand{\classpart}{\mathfrak P_k} 
\newcommand{\classconn}{\mathfrak C_k}

\title{Spectral minimal partitions of combinatorial graphs}

\subjclass[2020]{49R05, 47B39, 31C20, 47A75, 05C63}

\keywords{Spectral minimal partitions; Laplacians on combinatorial graphs; weighted graphs; infinite graphs; boundary conditions; spectral geometry}

\author[M.~Hofmann]{Matthias Hofmann
\orcidlink{0000-0002-5996-4483}}

\author[J.B.~Kennedy]{James B.\ Kennedy
\orcidlink{0000-0001-5634-0301}}

\author[D.~Mugnolo]{Delio Mugnolo
\orcidlink{0000-0001-9405-0874}}

\author[M.~Plümer]{Marvin Plümer
\orcidlink{0000-0002-7161-4697}}

\address{Matthias Hofmann, Lehrstuhl Numerische Mathematik, Fakult\"at Mathematik und Informatik, Fern\-Universit\"at in Hagen, D-58084 Hagen, Germany}
\email{matthias.hofmann@fernuni-hagen.de}

\address{James B. Kennedy, Departament of Mathematics, University of Aveiro, 3810-193 Aveiro, Portugal}
\email{jbkennedy@ua.pt}

\address{Delio Mugnolo, Lehrstuhl Analysis, Fakult\"at Mathematik und Informatik, Fern\-Universit\"at in Hagen, D-58084 Hagen, Germany}
\email{delio.mugnolo@fernuni-hagen.de}

\address{Marvin Plümer, Lehrstuhl Analysis, Fakult\"at Mathematik und Informatik, Fern\-Universit\"at in Hagen, D-58084 Hagen, Germany}
\email{marvin.pluemer@fernuni-hagen.de}

\date{\today}

\thanks{The authors are grateful to Gregory Berkolaiko (Texas A\&M) for helpful discussions.\\
This work was supported by the Funda\c{c}\~ao para a Ci\^encia e a Tecnologia (FCT), Portugal, within the scope of the project Spectral Optimal Partitions: geometric and numerical analysis, reference \href{https://doi.org/10.54499/2023.13921.PEX}{2023.13921.PEX} (M.H. and J.B.K.), and via the research centers CIDMA, references \href{https://doi.org/10.54499/UID/04106/2025}{UID/04106/2025} and \href{https://doi.org/10.54499/UID/PRR/04106/2025}{UID/PRR/04106/2025}, \url{} (J.B.K.) and GFM, references \href{https://doi.org/10.54499/UID/00208/2025}{UID/00208/2025} and \href{https://doi.org/10.54499/UID/PRR/00208/2025}{UID/PRR/00208/2025} (J.B.K.) under the FCT Multi-Annual Financing Program for R{\&}D Units. 
D.M.\ and M.P.\ were partially supported by the Deutsche Forschungsgemeinschaft (Grant 397230547). \\
 This article is based upon work
 from COST Actions 18232 MAT-DYN-NET and 24122 mSPACE, supported by COST (European Cooperation in Science and
 Technology), \url{www.cost.eu}.
}

\begin{document}

\begin{abstract}
This paper investigates spectral minimal partitions for weighted graphs, thus extending the extensive class of results that are currently available on domains and, to a lesser extent, manifolds and metric graphs.
We provide a rigorous framework for analyzing graph Laplacians under Dirichlet, Neumann, and boundaryless energy formulations; a central focus of the study is establishing existence theorems for minimal partitions. While existence is straightforward for finite connected graphs due to the finiteness of the class of admissible partitions, infinite graphs require advanced topological and functional-analytic machinery. Specifically, we introduce the notion of canonical compactifiability, which relates to compact embeddings and uniform Poincar\'e-type constants for Neumann and boundaryless energies; and an appropriate notion of subgraph convergence. In this way, we can relax the spectral minimal problem on infinite graphs by reducing it to the study of finite graphs; and can, thus, guarantee that optimal spectral energies are actually attained by appropriate partitions even in non-compact settings.
\end{abstract}

\maketitle

\section{Introduction}
This article introduces a systematic study of spectral minimal partitions of combinatorial graphs.

The isoperimetric constant of a closed compact manifold was introduced by Cheeger in~\cite{Che70}, and it was shown in~\cite{Yau75} that it can be interpreted as a variational quantity, as the lowest positive critical value of the Rayleigh quotient associated with the $p$-Laplacian, for $p=1$. The same idea was extended to graphs in~\cite{Dod84,AloMil85} and later in \cite{BuhHei09,SzlBre10}, starting with $p=2$ and then for $p\to 1$, paving the road to the development of spectral clustering. The main issue with this method is that the nodal domains of the eigenfunctions associated with higher eigenvalues cannot guarantee the delivery of a sought-after number of clusters. Higher-order Cheeger partitioning via the $p$-Laplacian was finally achieved in~\cite{LeeOveTre12} for $p=2$, although this technique does not seamlessly carry over to the case of $p\ne 2$, see \cite{HeiLenMug15}. Here we follow another approach, strongly inspired by the theory of spectral minimal partitions on domains \cite{HelHofTer09} and metric graphs~\cite{KenKurLen21,HofKenMug21}.

{Roughly, by a partition of a graph $\mG$ we will understand a family of connected, pairwise disjoint subgraphs of $\mG$ (which are usually called \emph{clusters}, or \emph{cells} in the literature), whose union is equal to $\mG$, that is, the partition should be \emph{exhaustive}. Other definitions are possible; for example, we may not require that the clusters be connected, or we may allow non-exhaustive partitions, but to keep things simpler we will only study the baseline case of exhaustive partitions consisting of connected clusters.}

 The basic idea of a spectral minimal partition was introduced in~\cite{ConTerVer05}, in the context of Euclidean domains: it boils down to introducing a functional 
\[
\Lambda_p(\mathcal P):=\left(\sum_{i=1}^k |\mu(\Omega_i)|^p\right)^\frac{1}{p}
\]
defined on an appropriate class $\mathfrak P$ of all partitions of a given spatial structure into $k$ disjoint \textit{clusters} $\Omega_1,\ldots,\Omega_k$, where $\mu(\Omega_i)$ denotes the first strictly positive eigenvalue of {a suitable} Laplacian on $\Omega_i$ -- the \textit{energy} of $\Omega_i$; and then to take its infimum over $\mathfrak P$.
Akin to domains and metric graphs, it is then crucial to specify two requirements:
\begin{itemize}
\item the class of partitions of interest;
\item  the relevant realisation of the Laplacian to be considered on each $\Omega_i$.
\end{itemize}
Our main goal here is to develop an existence theory for such spectral minimal partitions in the context of combinatorial graphs, which may be finite or countably infinite. We will consider three natural realisations of the Laplacian, each defining a different notion of energy: the \textit{Dirichlet energy}, the \textit{Neumann energy} and the \textit{boundaryless energy}: the latter one is induced by the Laplacian on the \textit{graph} $\Omega_i$, the other two reflect the nature of $\Omega_i$ as a \textit{subgraph} of $\mG$ and are, therefore, arguably more geometric; they are induced by two Laplacian realisations that, to the best of our knowledge, were introduced in~\cite{Fri93,ChuGraYau96}, respectively. We review all of them in Section~\ref{sec:some-examples}. Robin-type realisations have been studied in~\cite{Fri93,BenCarEnc00,BerRazSmi12}, among others, but we will not delve into this generalisation.

A comparable theory was also developed in~\cite{OstWhiOud14}, albeit for the Dirichlet energy, and finite graphs only. On the other hand, the authors of~\cite{OstWhiOud14} relax the notion of graph partition to allow for clusters that actually are partitions of unity over the vertex set, see \cite[Section~3]{OstWhiOud14} (clearly, each graph partition defines a partition of unity on $\mV$, but not vice versa).

Also due to advances in the theory of infinite graphs over the last ten years or so \cite{KelLenWoj21}, including the study of different realisations of the Laplacian, it is now easy to find natural conditions that ensure that, even on infinite graphs, these realisations of the Laplacian have discrete spectrum and, in particular, the lowest positive eigenvalue $\mu$ is a well-defined quantity. (Cheeger inequalities leading to a $ k$-partitioning for $k=2$ have already been extensively studied for infinite combinatorial graphs, too, see \cite{KelMug16} and references therein; a different approach based on iso\textit{capacitary} constants, leading again to a 2-partitioning, has been recently proposed in~\cite{HuaWan25}.)

In \cite{BerRazSmi12} a connection between the nodal domains of graph Laplacian eigenvectors and criticality of graph partitions with respect to a spectral energy functional were explored, and a one-to-one correspondence between critical graph partitions and the signed Laplacian was recently established in \cite{Men26}.  

We also mention that \cite{BeHo25} explored a similar idea to investigate the correspondence between eigenvalues of the $p$-Laplacian and critical points of a family of eigenvalue problems on cut graphs. The issue of spectral minimal $k$-partitions induced by the discrete $p$-Laplacian {(again for finite graphs and a Dirichlet realisation)} was discussed in~\cite[Section~9.1]{DeiTudZha25}, with a special focus on the limiting case $p\to \infty$.

Let us present a plan of the present work: In \autoref{sec:some-examples} we fix the notation, introduce the underlying function spaces and energy functionals and develop the spectral theory. We also give an example to provide intuition with regard the spectral energies in the partition problems that we consider.  In \autoref{sec:smps} we develop the existence theory for the types of spectral minimal partition problems under consideration. In \autoref{sec:spec-ineq}, we discuss some spectral inequalities for the associated partition energies of these spectral minimal partitions. We conclude, {in \autoref{sec:snakes-and-ladders}, with a prototypical family of examples, consisting of finite and infinite ladder graphs. Here our goals are twofold: for finite ladders, we show how the different realizations of the Laplacian lead to profoundly different types of minimizing partitions; for infinite ladders, we show why auxiliary assumptions on the graphs, in particular as regards vertex and edge weights, is necessary in order to guarantee that minimizing partitions exist.}

\section{Preliminaries and Notation}\label{sec:some-examples}

\subsection{Metric structure and function spaces on infinite graphs}
Let $\mG$ be a finite or countably infinite graph with vertex set $\mV$ and edge set $\mE$; we refer to~\cite[Chapt.~8]{Die05} for general definitions. We will always assume, without further comment, that \(\mG\) is \emph{simple} and \emph{connected}, i.e.\ \(\mG\) has no loops or parallel edges and any two vertices in \(\mG\) can be connected with a path in \(\mE\). Because \(\mG\) is simple, each edge \(\me\) can be identified with a unique two-element set \(\{\mv,\mw\}\) of distinct vertices \(\mv,\mw\in\mV\). For \(\mv\) let \(\mE_\mv\) denote the set of edges incident to \(\mv\). 

We endow each vertex $\mv\in\mV$ and each edge $\me\in\mE$ with weights

$\nu(\mv)\in (0,\infty)$ and $\omega(\me)\in (0,\infty)$. We require that the weights are locally finite, i.e.,
\begin{equation}\label{eq:omega-finite}
\sum_{\me\in\mE_\mv}\omega(\me)<\infty
\end{equation}
holds for all vertices \(\mv\in\mV\).
Furthermore, we shall extend \(\nu\) to a discrete measure on the vertex set \(\mV\) writing
\[
	\nu(\mS):=\sum_{\mv\in \mS}\nu(\mv)
\]
for subsets \(\mS\subset\mV\); and likewise for $\omega$.

We say that \(\nu\) is \emph{finite} if
\begin{equation}
\label{eq:finite-nu}
	\nu(\mV)<\infty.
\end{equation}

Usually the edge weight \(\omega(\me)\) is interpreted as an interaction strength between the incident vertices \(\mv\) and \(\mw\). (This interpretation immediately becomes clear if one looks at the quadratic form \(\dForm\) defined below: if the edge weight \(\omega(\me)\) becomes larger, the values \(f(\mv)\) and \(f(\mw)\) need to be closer for \(\dForm(f)\) to be small.) One may, e.g. think of \({1}/{\omega(\me)}\) as the length of the edge \(\me\) like in \cite{GeoHaeKel15, KosMalNei17}. We define a metric on \(\mV\), accordingly: for a path \(\gamma=(\mv_0,\mv_1\ldots,\mv_n)\), where any two successive vertices \(\mv_{j-1}\) and \(\mv_j\) are connected by an edge \(\me_j\), we define its length by
	\[L(\gamma):=\sum_{j=1}^n\frac{1}{\omega(\me_j)}.\]
On \(\mV\) we consider the \emph{shortest path metric} given by
\begin{equation}\label{eq:distfunction}
    d(\mv,\mw):=\inf\{L(\gamma)~|~\gamma\text{ is a path connecting }\mw\text{ and }\mv\},\quad \mv,\mw\in\mV.
\end{equation}

The diameter of \(\mG\) with respect to the metric \(d\) is, as usual,
	\[\diam_d(\mG):=\sup\{d(\mv,\mw)~|~\mv,\mw\in\mV\}.\]

In this way, $\mG$ is a  metric measure space in the sense of~\cite{Stu06}. For $1 \leq p < \infty$, let

\[
	\ell_\nu^p(\mV):=\left\{f:\mV\Ra \C~|~\sum_{\mv\in\mV}\nu(\mv)|f(\mv)|^p<\infty\right\}
\]
be the space of $p$-summable functions on $\mV$, a Banach space for the natural norm. When $p=2$ this is a Hilbert space when equipped with the scalar product given by
\[
	(f,g)_{\lSP(\mV)}=\sum_{\mv\in\mV}\nu(\mv)f(\mv)\overline{g(\mv)}
\]
and its induced norm \(\|\cdot\|_{\lSP(\mV)}\). Moreover, let \(\ell^\infty(\mV)\) denote the space of bounded functions on \(\mV\) equipped with the norm given by
	\[\|f\|_{\ell^\infty(\mV)}=\sup_{\mv\in\mV}|f(\mv)|.\]

\begin{lemma}\label{prop:comp-embedd}
	If \(\nu\) is finite, then \(\ell^\infty(\mV)\) is compactly embedded in \(\ell^p_\nu(\mV)\) for any $p\in [1,\infty)$.
\end{lemma}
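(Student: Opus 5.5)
The plan is to establish continuity of the embedding first, and then compactness by approximating the identity with finite-rank operators.

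\emph{Continuity.} For $f\in\ell^\infty(\mV)$ we have
\[
\|f\|_{\ell^p_\nu(\mV)}^p=\sum_{\mv\in\mV}\nu(\mv)|f(\mv)|^p\le \nu(\mV)\,\|f\|_{\ell^\infty(\mV)}^p .
\]
So the inclusion $J:\ell^\infty(\mV)\to\ell^p_\nu(\mV)$ is well defined and bounded, with $\|J\|\le\nu(\mV)^{1/p}$, because $\nu$ is finite by~\eqref{eq:finite-nu}.

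\emph{Compactness.} If $\mV$ is finite, both spaces are finite-dimensional and there is nothing left to prove. Otherwise, enumerate $\mV$ (it is countable) as $\mv_1,\mv_2,\ldots$ and set $\mV_n:=\{\mv_1,\ldots,\mv_n\}$. Let $P_n f:=f\cdot\mathbf 1_{\mV_n}$. Then $J_n:=J\circ P_n$ has rank at most $n$ and is therefore compact. Moreover,
\[
\|(J-J_n)f\|_{\ell^p_\nu(\mV)}^p=\sum_{\mv\notin\mV_n}\nu(\mv)|f(\mv)|^p\le \nu(\mV\setminus\mV_n)\,\|f\|_{\ell^\infty(\mV)}^p ,
\]
which gives $\|J-J_n\|\le \nu(\mV\setminus\mV_n)^{1/p}$. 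This tends to $0$ as $n\to\infty$, since it is the tail of the convergent series $\sum_{\mv}\nu(\mv)$. A norm limit of compact operators is compact, so $J$ is compact.

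As a fallback, one can argue directly with sequences. Given a bounded sequence in $\ell^\infty(\mV)$, a diagonal argument extracts a pointwise convergent subsequence. The same tail estimate, together with the pointwise convergence on the finite set $\mV_n$, then shows that this subsequence is Cauchy in $\ell^p_\nu(\mV)$.

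The only step with real content is the uniform tail bound. It follows immediately from $\nu(\mV)<\infty$, so I do not anticipate any genuine obstacle.
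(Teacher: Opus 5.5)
Your proof is correct and follows essentially the same route as the paper: approximate the embedding in operator norm by finite-rank cut-off operators on finite vertex sets exhausting $\mV$. You also supply the explicit tail bound $\|J-J_n\|\le \nu(\mV\setminus\mV_n)^{1/p}$, which the paper leaves implicit, and you take the finite sets nested, which that operator-norm convergence actually requires and which the paper does not state.
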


\begin{proof}
	First of all, \(\ell^\infty(\mV)\subset\ell^p_\nu(\mV)\) holds, because \(\nu\) is finite,  Now, consider a sequence \((\mV^n)_{n\in\N}\) of finite subsets \(\mV^n\subset \mV\) with \(\mV=\bigcup_{n\in \N}\mV^n\) and let \(I_n:\ell^\infty(\mV)\rightarrow\ell^p_\nu(\mV)\) denote the cut-off operator given by
		\[(I_nf)(\mv)=\begin{cases} f(\mv), &\mv\in\mV^n,\\ 0, &\mv\notin\mV^n. \end{cases}\]
	Each \(I_n\) is of finite rank and the sequence \((I_n)_{n\in\N}\) converges to the embedding \(I:\ell^\infty(\mV)\rightarrow\ell^p_\nu(\mV)\) with respect to the operator norm, thus \(I\) is compact.
\end{proof}

\subsection{Laplacians on infinite graphs}\label{subsec:lapl}

{

With the goal of partitioning a given graph into several clusters, it is natural to use nodal domains of some self-adjoint realisation of the \emph{Laplace matrix}. We will define these operators via Dirichlet forms on combinatorial graphs. We refer to \cite{KelLenWoj21} for an introduction to the topic. Let us briefly recall some basic facts about the operator. 
}

We consider the sesquilinear form \(\dForm\) given by
\begin{equation}
\label{eq:in-form}
\begin{split}	\dForm(f,g):=&\left(\mathcal I^\top f,\mathcal I^\top g\right)_{\ell^2_\omega(\mE)}\\
=&\sum_{\me=\{\mv,\mw\}\in\mE}\omega(\me)(f(\mw)-f(\mv))\overline{(g(\mw)-g(\mv))}
\end{split}
\end{equation}
	
with maximal domain
	\[
	h^1(\mV):=\left\{f\in\lSP(\mV)~|~\sum_{\me=\{\mv,\mw\}\in\mE}\omega(\me)|f(\mw)-f(\mv)|^2 <\infty\right\}.\]
	(Here $\mathcal I$ is the signed incidence matrix of an arbitrary directed version of $\mG$, see~\cite[Chapter~2]{Mug14}.)
Then \( \left(\dForm,h^1(\mV)\right)\) is a densely defined, closed quadratic form, cf.~\cite[Section~1.1]{KelLenWoj21}.
We denote by \((\cdot,\cdot)_\dForm\) the scalar product on \(h^1(\mV)\) induced by \(\dForm\), i.e.\
	\[(f,g)_\dForm=(f,g)_{\lSP(\mV)}+\dForm(f,g).\]
In particular, $h^1(\mV)$

is a Hilbert space. Note that  $h^1(\mV)$ depends on both the weight functions $\nu$ and $\omega$; however, since we will always consider these to be fixed throughout the paper, we will suppress this dependence in the notation.

We naturally regard $h^1(\mV)$ as a discrete version of a prototypical first-order Sobolev space; indeed, in analogy with the theory of Sobolev spaces on Euclidean domains, we also consider the space obtained closing a suitable space of test functions with respect to $\|\cdot\|_{\dForm}$,
\[
h^1_0(\mV):= \overline{c_{00}(\mV)}^{_{\|\cdot\|_{\mathcal Q}}},
\]
where $c_{00}(\mV)$ denotes the set of functions $f: \mV \to \C$ which have finite support, i.e., are nonzero on only finitely many vertices.

{
In the following, we will fix a form domain $E=D(\dForm)$ with the following properties:

\begin{assum}
\label{assum:domform}
The set $E=D(\dForm)$ is a closed subspace of $h^1(\mV)$ such that
\begin{equation}
\label{eq:domform}
h_0^1(\mV) \subset D(\dForm) \subset h^1(\mV). 
\end{equation}
\end{assum}

Under this assumption, we} consider the self-adjoint operator 
\(
\dLapl
\) 
on $\ell^2_\nu(\mV)$ associated with the (closed) symmetric form $\left(\dForm,D(\dForm) \right)$; then $\dLapl$ acts on test functions as the formal Laplacian, i.e.,
\[
{\mathcal L}f(\mv)=\frac{1}{\nu(\mv)}\sum_{\substack{\me=\{\mv,\mw\}\in\mE_\mv \\ \mw\in\mV_0}}\omega(\me)(f(\mv)-f(\mw))=0\qquad \hbox{for all } f\in c_{00}(\mV),\ \mv\in \mV,
\]
and the Friedrichs extension of $\mathcal L_{|c_{00}(\mV)}$ is precisely the operator on $\ell^2_\nu(\mV)$ associated with $\left(\dForm,h^1_0(\mV)\right)$.

In some relevant cases (e.g., for \textit{canonically compactifiable graphs} as discussed in~\cite{GeoHaeKel15}, see \autoref{def:canonically-compactifiable} below), a notion of \emph{boundary at infinity} (technically speaking, a so-called \emph{Royden boundary}) can be defined on which boundary conditions must be imposed for the resulting Laplacian to be self-adjoint: by taking \autoref{assum:domform} and, in particular, \eqref{eq:domform}, we can freely handle various kind of conditions, including mixed conditions of Dirichlet/Neumann-type at the (Royden) boundary of the graph.

For further details on Laplacians on infinite weighted graphs we refer to \cite{HaeKelLen12}, where the question of whether these realizations coincide is thoroughly discussed. If \(\mV\) is infinite the Laplacian does not need to have discrete spectrum. A precise characterization of pure discreteness of a Laplacian spectrum on sparse graphs (including trees) is  known \cite[Theorem~1.2]{BonGolKel15}.
The following proposition will give a further useful condition.

\begin{prop}\label{prop:comp-embed-formdomain-ultra}
	If \(\nu\) is finite and $h^1(\mV)$ is continuously embedded into $\ell^p_\nu(\mV)$ for some $2<p\leq\infty$, then \(h^1(\mV)\) is compactly embedded in \(\lSP(\mV)\). In particular, {for any choice of domain $D(\dForm)$ satisfying \autoref{assum:domform}, the associated operator} \(\dLapl\) has pure discrete spectrum.
\end{prop}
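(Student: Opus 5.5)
The approach is to factor the embedding $h^1(\mV)\hookrightarrow \lSP(\mV)$ as a norm limit of finite-rank truncations, in the spirit of the proof of \autoref{prop:comp-embedd}. The continuous embedding into $\ell^p_\nu(\mV)$, $p>2$, together with finiteness of $\nu$, controls the tails uniformly. Fix an exhaustion of $\mV$ by finite sets $\mV^n$ and let $I_n f := f\mathbf 1_{\mV^n}$. Each $I_n:h^1(\mV)\to\lSP(\mV)$ is finite rank, hence compact. It therefore suffices to show that $\|f - I_n f\|_{\lSP(\mV)} \le \varepsilon_n \|f\|_{\dForm}$ with $\varepsilon_n\to 0$; the embedding is then a norm limit of compact operators and so compact.

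For $p<\infty$, apply Hölder's inequality with exponents $p/2$ and $p/(p-2)$ with respect to $\nu$ on $\mV\setminus\mV^n$:
\[
\sum_{\mv\notin\mV^n}\nu(\mv)|f(\mv)|^2 \le \Bigl(\sum_{\mv\notin\mV^n}\nu(\mv)|f(\mv)|^p\Bigr)^{2/p}\,\nu(\mV\setminus\mV^n)^{(p-2)/p}\le C^2\|f\|_{\dForm}^2\,\nu(\mV\setminus\mV^n)^{(p-2)/p},
\]
where $C$ is the embedding constant. For $p=\infty$ the analogous bound is $\|f\|_\infty^2\,\nu(\mV\setminus\mV^n)$. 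Since $\nu$ is finite, $\nu(\mV\setminus\mV^n)\to0$, which gives $\varepsilon_n\to0$. Alternatively, for $p=\infty$ one can simply compose the bounded map $h^1\to\ell^\infty$ with the compact embedding of \autoref{prop:comp-embedd}.

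For the spectral statement, note that $D(\dForm)$ is a closed subspace of $h^1(\mV)$ carrying the same norm, by \autoref{assum:domform}. Restricting the compact embedding to it gives that $(D(\dForm),\|\cdot\|_{\dForm})\hookrightarrow\lSP(\mV)$ is compact. By the standard result that a nonnegative self-adjoint operator whose form domain embeds compactly has compact resolvent, $(\dLapl+1)^{-1/2}$ is compact. Concretely, $(\dLapl+1)^{-1/2}$ maps $\lSP(\mV)$ isometrically onto $D(\dForm)$ with the form norm, and we compose this with the compact embedding. Hence $(\dLapl+1)^{-1}$ is compact, and $\dLapl$ has pure discrete spectrum.

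The main obstacle is essentially just the Hölder tail estimate. One point needs care: the constant must be uniform in $f$. This holds because only $\nu(\mV\setminus\mV^n)$ enters, and this is exactly where $p>2$ is needed, since for $p=2$ the exponent $(p-2)/p$ vanishes.
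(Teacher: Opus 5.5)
Your argument is correct, and it takes a genuinely different and much more elementary route than the paper. The paper argues through semigroup theory. It uses that $(\dForm,h^1(\mV))$ is a Dirichlet form, so the semigroup is Markovian. It then invokes Arendt's equivalence between a Sobolev-type embedding $h^1(\mV)\hookrightarrow\ell^p_\nu(\mV)$, $p>2$, and ultracontractivity, and applies the Dunford--Pettis theorem to obtain a bounded kernel. Finally, finiteness of $\nu$ makes $e^{-t\dLapl}$ Hilbert--Schmidt and hence compact, which gives the compact embedding of the form domain.

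Your H\"older tail estimate bypasses all of this. In fact it shows that $\ell^p_\nu(\mV)\hookrightarrow\lSP(\mV)$ is itself compact whenever $\nu$ is finite and $p>2$ (the case $p=\infty$ is \autoref{prop:comp-embedd}), with the explicit rate $\|f-f\mathbf 1_{\mV^n}\|_{\lSP(\mV)}\le \nu(\mV\setminus\mV^n)^{1/2-1/p}\|f\|_{\ell^p_\nu(\mV)}$. So neither the graph structure nor the Markov property of $\dForm$ plays any role beyond the assumed continuous embedding. Your handling of an arbitrary $D(\dForm)$ is also cleaner than the paper's implicit reduction to the maximal domain: you treat it as a closed subspace with the same norm and then factor through the isometry $(\dLapl+1)^{-1/2}$.

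What the paper's route buys in exchange is extra spectral information. The heat semigroup is Hilbert--Schmidt for every $t>0$, hence trace class by the semigroup property, i.e. $\sum_k e^{-t\lambda_k}<\infty$. This quantifies eigenvalue growth rather than merely establishing discreteness.

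The only point to state explicitly in your version is that the exhaustion $(\mV^n)$ is increasing, or at least satisfies $\nu(\mV\setminus\mV^n)\to0$. That is exactly where finiteness of $\nu$ enters.
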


\begin{proof}
It is sufficient to consider the operator on $\ell^2_\nu(\mV)$ associated with $\left(\dForm,h^1(\mV)\right)$, which is a Dirichlet form by~\cite[Proposition 1.14]{KelLenWoj21}. Hence, the associated semigroup {$(e^{-t\mathcal{L}})_{t\ge 0}$} is Markovian. Then, because $c_{00}(\mV)$ and hence $h^1(\mV)\cap \ell^1_\nu(\mV)$ are clearly dense in $\ell^1(\mV)$, \cite[Theorem in Section~7.3.2]{Are04} implies that ultracontractivity of the semigroup -- hence, in particular, the fact that it maps $\ell^1_\nu(\mV)$ to $\ell^\infty(\mV)$ -- is equivalent to a Sobolev-type embedding of $h^1(\mV)$ into $\ell^p_\nu(\mV)$ for some $p>2$. But the Dunford--Pettis--Kantorovich--Vulikh principle states that the bounded operators from $\ell^1_\nu(\mV)$ to $\ell^\infty(\mV)$ are precisely those with an $\ell^\infty(\mV\times \mV)$-kernel. Because $\nu$ is finite, $\ell^\infty(\mV\times \mV)\hookrightarrow \ell^2_{\nu\times \nu}(\mV\times \mV)$, hence {for each $t>0$, the operator $e^{-t\mathcal{L}}$} is Hilbert--Schmidt, hence of trace class and, in particular, compact; we finally conclude that the form domain is compactly embedded into $\ell^2_\nu(\mV)$.
\end{proof}
We next recall a sufficient condition for the continuous embedding of $h^1(\mV)$ into {$\ell^\infty(\mV)$ and hence (by \eqref{eq:finite-nu}) into} $\ell^p_\nu(\mV)$.

\begin{defi}
\label{def:canonically-compactifiable}
	The graph \(\mG\) is called \emph{canonically compactifiable} if $h^1(\mV)$ is continuously embedded into $ \ell^\infty(\mV)$.

\end{defi}

A well-known example of a canonically compactifiable graph is $\Z^d$ for any $d\in \N\setminus\{2\}$, see \cite{Var89} or \cite[Section~4.A]{Woe00}.

\begin{rem}
By \cite[Theorem~5.8]{GeoHaeKel15}, if $\mG$ is canonically compactifiable, then the domain of the operator associated with $\left(\dForm,h^1_0(\mV) \right)$ consists of all functions $f\in h^1_0(\mV)$ such that $\mathcal Lf\in \ell^2_\nu(\mV)$ and such that $f$ satisfies Dirichlet conditions at the (Royden) boundary of $\mG$.
\end{rem}
The following is now an immediate consequence of \autoref{prop:comp-embed-formdomain-ultra}; it was already stated in~\cite[Lemma~1.1]{LenSchSto18} (with a different proof).

\begin{cor}\label{cor:comp-embed-formdomain}
	If \(\nu\) is finite and \(\mG\) is canonically compactifiable, then \(h^1(\mV)\) is compactly embedded in \(\lSP(\mV)\). In particular, {for any choice of $D(\dForm)$,} the operator \(\dLapl\) has pure discrete spectrum.
\end{cor}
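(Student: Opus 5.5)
The corollary follows from \autoref{prop:comp-embed-formdomain-ultra} once its hypotheses are checked, and there is also a short direct route through \autoref{prop:comp-embedd}. The plan is as follows. By \autoref{def:canonically-compactifiable}, canonical compactifiability means that $h^1(\mV)$ embeds continuously into $\ell^\infty(\mV)$. So there is $C>0$ with
\[
\|f\|_{\ell^\infty(\mV)}\le C\|f\|_{\dForm}\qquad\text{for all } f\in h^1(\mV).
\]
This is exactly the hypothesis of \autoref{prop:comp-embed-formdomain-ultra} with $p=\infty$, which that proposition admits since it allows $2<p\le\infty$. Together with finiteness of $\nu$, the proposition yields the compact embedding $h^1(\mV)\hookrightarrow\lSP(\mV)$ at once.

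For the direct argument, I would factor the embedding $h^1(\mV)\hookrightarrow\lSP(\mV)$ as
\[
h^1(\mV)\hookrightarrow\ell^\infty(\mV)\hookrightarrow\lSP(\mV).
\]
The first map is bounded by canonical compactifiability. The second is compact by \autoref{prop:comp-embedd} with $p=2$, since $\nu$ is finite. A composition of a bounded operator with a compact one is compact, which gives the first claim without any semigroup machinery.

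For the spectral statement, fix any $D(\dForm)$ satisfying \autoref{assum:domform}. It is a closed subspace of $h^1(\mV)$, and $\|\cdot\|_{\dForm}$ is exactly the $h^1$-norm. Hence the embedding $D(\dForm)\hookrightarrow\lSP(\mV)$ is the restriction of a compact operator to a closed subspace, and is therefore compact. The form is closed and bounded below, so the associated self-adjoint operator $\dLapl\ge 0$ has form domain compactly embedded in $\lSP(\mV)$. The standard argument then gives a compact resolvent $(\dLapl+1)^{-1}$, as a bounded map $\lSP\to D(\dForm)$ composed with the compact embedding, and hence purely discrete spectrum.

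No step is a real obstacle. The only points to check are that the $h^1$-norm used in the definition of canonical compactifiability is $\|\cdot\|_{\dForm}$, and that compactness of the resolvent passes to every intermediate domain in \eqref{eq:domform}. Both are routine.
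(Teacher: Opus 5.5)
Your proposal is correct. Your first paragraph is exactly the paper's proof: the corollary is recorded there as an immediate consequence of \autoref{prop:comp-embed-formdomain-ultra} with $p=\infty$. That proposition's proof uses the Markov property of the semigroup, the equivalence of Sobolev-type embeddings with ultracontractivity, the Dunford--Pettis kernel representation and a Hilbert--Schmidt argument, and only then passes from compactness of $e^{-t\dLapl}$ back to compactness of the form-domain embedding.

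Your second route, the factorization $h^1(\mV)\hookrightarrow\ell^\infty(\mV)\hookrightarrow\lSP(\mV)$ via \autoref{prop:comp-embedd}, is genuinely different and more elementary. It only uses that the tail mass $\nu(\mV\setminus\mV^n)$ of a finite measure on a countable set tends to zero. It is also the same mechanism the paper itself uses later, in the proof of \autoref{thm:neu-existence}. The semigroup route buys extra information, namely ultracontractivity and a bounded, hence Hilbert--Schmidt, heat kernel. It does not give a stronger embedding statement: even the case $2<p<\infty$ of \autoref{prop:comp-embed-formdomain-ultra} follows your way. Indeed, H\"older gives $\|f\mathbf 1_{\mV\setminus\mV^n}\|_{\lSP(\mV)}\le\nu(\mV\setminus\mV^n)^{\frac12-\frac1p}\|f\|_{\ell^p_\nu(\mV)}$, so $\ell^p_\nu(\mV)\hookrightarrow\lSP(\mV)$ is compact as well.

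Your handling of an arbitrary $D(\dForm)$ is the standard argument that the paper leaves implicit: restrict the compact embedding to a closed subspace, then compose with the bounded map $(\dLapl+1)^{-1}\colon\lSP(\mV)\to D(\dForm)$.
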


 It is notable that the property of canonical compactifiability, which a priori is purely analytic, can be enforced by geometric properties. Let us recall the following, which follows immediately from \cite[Lemma~4.4]{GeoHaeKel15}.

\begin{lemma}\label{lem:finfingeohaekel}
If $\nu$ is finite, then a connected graph \(\mG\) is canonically compactifiable if it has finite diameter.
\end{lemma}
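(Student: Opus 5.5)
The plan is to prove directly that every $f\in h^1(\mV)$ is bounded, with $\|f\|_{\ell^\infty(\mV)}\le C\|f\|_{\dForm}$ for a constant $C$ depending only on $D:=\diam_d(\mG)<\infty$ and $\nu(\mV)<\infty$. The argument combines a pointwise oscillation estimate along paths with an averaging step using the finiteness of $\nu$.

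\emph{Step 1: oscillation along a path.} Fix $\mv,\mw\in\mV$ and any path $\gamma=(\mv_0,\dots,\mv_n)$ from $\mw$ to $\mv$ with edges $\me_1,\dots,\me_n$. Write each difference as
\[
f(\mv_j)-f(\mv_{j-1})=\omega(\me_j)^{-1/2}\cdot\omega(\me_j)^{1/2}\bigl(f(\mv_j)-f(\mv_{j-1})\bigr).
\]
Telescoping and Cauchy--Schwarz then give
\[
|f(\mv)-f(\mw)|\le \Bigl(\sum_{j=1}^n\omega(\me_j)^{-1}\Bigr)^{1/2}\Bigl(\sum_{j=1}^n\omega(\me_j)|f(\mv_j)-f(\mv_{j-1})|^2\Bigr)^{1/2}.
\]
The first factor is $L(\gamma)^{1/2}$. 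The second factor is at most $\dForm(f)^{1/2}$, provided each edge is counted at most once. To guarantee this, we may shortcut $\gamma$ to a simple path, which only decreases $L(\gamma)$ and involves distinct edges. Taking the infimum over paths yields
\[
|f(\mv)-f(\mw)|\le d(\mv,\mw)^{1/2}\dForm(f)^{1/2}\le D^{1/2}\dForm(f)^{1/2}.
\]

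\emph{Step 2: anchoring the function value.} We need a reference value that is controlled by $\|f\|_{\lSP(\mV)}$. Multiply the inequality $|f(\mv)|\le|f(\mw)|+D^{1/2}\dForm(f)^{1/2}$ by $\nu(\mw)$ and sum over $\mw\in\mV$. Using Cauchy--Schwarz in the form $\sum_\mw\nu(\mw)|f(\mw)|\le\nu(\mV)^{1/2}\|f\|_{\lSP(\mV)}$, we obtain
\[
\nu(\mV)\,|f(\mv)|\le \nu(\mV)^{1/2}\|f\|_{\lSP(\mV)}+\nu(\mV)D^{1/2}\dForm(f)^{1/2}.
\]
Hence
\[
\sup_\mv|f(\mv)|\le \nu(\mV)^{-1/2}\|f\|_{\lSP(\mV)}+D^{1/2}\dForm(f)^{1/2}\le C\|f\|_{\dForm}.
\]
This is exactly the continuous embedding $h^1(\mV)\hookrightarrow\ell^\infty(\mV)$ required by \autoref{def:canonically-compactifiable}.

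The only delicate point is the infimum in the definition of $d$: the estimate must hold for every path, not just a minimizing one, since a minimizer need not exist on infinite graphs. This is handled by the reduction to simple paths in Step 1, after which the bound holds for every path and the infimum passes through. Connectedness of $\mG$ ensures that $d(\mv,\mw)<\infty$, so paths exist in the first place. Finiteness of $\nu$ is used only in Step 2, which is why it appears as a hypothesis of the lemma. The statement is in fact the content of \cite[Lemma~4.4]{GeoHaeKel15}, and the argument above reproduces it.
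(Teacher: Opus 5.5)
Your proof is correct and follows essentially the route the paper relies on. The paper simply invokes Lemma~4.4 of [GeoHaeKel15], which rests on your telescoping/Cauchy--Schwarz estimate $|f(\mv)-f(\mw)|^2\le d(\mv,\mw)\,\dForm(f)$, and your averaging step is the device the paper itself uses in the proofs of \autoref{prop:hierarchy} and \autoref{prop:geometric-condition-for-ucc}.

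One correction to your closing remark: finiteness of $\nu$ is not actually needed for this lemma. Anchoring at a single fixed vertex $\mw_0$ via $|f(\mw_0)|\le\nu(\mw_0)^{-1/2}\|f\|_{\lSP(\mV)}$ already gives $\|f\|_{\ell^\infty(\mV)}\le\nu(\mw_0)^{-1/2}\|f\|_{\lSP(\mV)}+\diam_d(\mG)^{1/2}\dForm(f)^{1/2}$.
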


In particular, $\mG$ has finite diameter (with respect to $d$ as defined in \eqref{eq:distfunction}) in the following two cases, see~\cite[Examples~4.6 and 4.7]{GeoHaeKel15}:
\begin{itemize}
\item $\omega^{-1}\in \ell^1(\mE)$;
\item $\mG$ is a rooted tree and all finite paths starting at the root have uniformly bounded length.
\end{itemize}

\begin{rem}\label{rem:no-subg}
\begin{enumerate}
\item\label{item:no-subg}
\autoref{def:canonically-compactifiable} corresponds to the terminology used in \cite[Section~4.2]{HuaKelSch23} and will be sufficient for our purposes. Let us however remark that, in earlier literature, the term \emph{canonically compactifiable} sometimes  referred to the (stronger) condition that the space
	\[
\mathcal E(\mV):=\left\{f:\mV\to\C~|~\sum_{\me=\{\mv,\mw\}\in\mE}\omega(\me)|f(\mw)-f(\mv)|^2 <\infty\right\}
	\]
of finite energy functions is contained in $\ell^\infty(\mV)$ (as in, e.g., \cite{GeoHaeKel15}). This is equivalent to the validity of certain \emph{Poincaré-type inequalities} as in \cite[Section~11.1]{Chu97} or \cite[Section~4.1]{GeoHaeKel15} (even for locally infinite graphs, see \cite[Example~8.4]{GeoHaeKel15}.
For characterizations of this stronger condition in terms of intrinsic measures and of the algebra property of the space of finite energy, see \cite[Section~4.1]{GeoHaeKel15} or \cite[Section 4]{Puc21} and in particular \cite[Section~3]{LenSchSto18}, where the assumption of local finiteness~\eqref{eq:omega-finite} is removed.

\item\label{item:no-finite-diam} If $\omega\equiv 1$ and $\nu\equiv 1$, an infinite graph has neither finite diameter nor finite measure, then neither \autoref{prop:comp-embed-formdomain-ultra} nor \autoref{cor:comp-embed-formdomain} can be applied. However, it is known that the Laplacian (whose restriction to $c_{00}(\mV)$ is well-known to be essentially self-adjoint under these assumptions on $\omega$ and $\nu$) can still have pure point spectrum: this is true

on general $k$-sparse graphs (see \cite[Definition~1.1]{BonGolKel15}) provided $\liminf_{j\to\infty} \deg(\mv_j)=\infty$, for an arbitrary enumeration of the vertex set, see also \cite[Corollary~12]{Mel17} for a stronger assertion on radially symmetric trees. (Several extensions of the results from \cite{BonGolKel15} to weighted graphs can be found in \cite[Chapter~10]{KelLenWoj21}.)

\item\label{item:mms-discrete}
{
Sufficient conditions for a  compact embedding of $h^1(\mV)$ into $\ell^2_\nu(\mV)$ that rely upon an abstract Kolmogorov--Riesz criterion -- and, thus, circumvent factorizing through $\ell^\infty(\mV)$ or $\ell^p_\nu(\mV)$ -- are known in the theory of metric measure spaces, too. In particular, it is known that $h^1(\mV)$ is compactly embedded into $\ell^2_\nu(\mV)$ whenever  the conditions
\begin{itemize}
\item  $\mG$ has finite diameter,
\item $0<\nu(B)<\infty$ for all balls $B$ in $\mG$,
\item each ball of radius $r$ in $\mG$ can be covered by at most $M$ balls of radius $\frac{r}{2}$, where $M$ is independent of $r$,
\end{itemize}
are all satisfied, see \cite[Proposition~2.1]{BjoKal22}. Similar sufficient and necessary conditions for the embedding of $h^1(\mV)$ into $\ell^p_\nu(\mV)$, needed to apply \autoref{prop:comp-embed-formdomain-ultra}, are given in~\cite[Theorem~1.3]{BjoKal22}.
}

\item\label{item:metric-gr} 
In the case of infinite \emph{metric} graphs of finite measure, the issue of compact embedding of Sobolev spaces in $L^2$-spaces was studied in \cite{DufKenMug25}; {a one-parameter family of diagonal comb metric graphs $(\Graph_\alpha)_{\alpha>0}$ was presented in~\cite[Section~3.2]{DufKenMug25} for which $H^1_0(\Graph_\alpha)$ is compactly embedded into $L^2(\Graph_\alpha)$ for all $\alpha>0$, but $H^1(\Graph_\alpha)$ is compactly embedded into $L^2(\Graph_\alpha)$ only for $\alpha>\frac12$. The same example carries over to the discrete setting; on the one hand, for the corresponding one-parameter family of diagonal comb \textit{weighted} graphs $(\mG_\alpha)_{\alpha>0}$, the compact embedding of $h^1_0(\mV_\alpha)$ into $\ell^2_\nu(\mV_0)$ follows from \cite[Lemma~2.10]{KosNic19}. On the other hand, the argument of \cite[Theorem~3.4.(2)]{DufKenMug25} can be transferred to the discrete case upon considering the combinatorial graph $\mG_\alpha$ underlying $\Graph_\alpha$: more precisely. the sequence of tent functions on $\Graph_\alpha$ constructed in the proof of \cite[Theorem~3.4.(2)]{DufKenMug25} naturally induces a sequence of orthonormal functions in $\ell^2(\mV_\alpha)$ that is equibounded in $h^1(\mV_\alpha)$.} 
\end{enumerate}
\end{rem}

\subsection{Laplacians on subgraphs and their eigenvalues}

We also want to consider suitable conditions at the boundary of suitable subgraphs (the candidate clusters in our partitions): this will be done in this section, extending some ideas in~\cite{Fri93} and~\cite[Chapter 8]{Chu97} to the case of infinite subgraphs.

Given a weighted graph $\mG=(\mV,\mE,\nu,\omega)$, let $\mV_0$ be a subset of $\mV$: we denote by $|\mV_0|$ and $|\mV|$ the number of their elements, respectively.
The \emph{induced subgraph} $\mG_0=(\mV_0,\mE_0,\nu_0,\omega_0)$ is defined in a natural way: its vertex set is $\mV_0$, two vertices in $\mV_0$ are adjacent in $\mG_0$ if and only if they were adjacent in $\mG$, and the weights $\nu_0,\omega_0$ are the restrictions of $ \nu,\omega$. In an abuse of terminology we will sometimes say that $\mV_0$ is connected if the subgraph $\mG_0$ induced by $\mV_0$ is connected with respect to the canonical topology on $\mG$, i.e., the topology induced by the shortest path metric in \eqref{eq:distfunction}.
 
To avoid trivialities, in the following we assume throughout that $\mV_0\subsetneq \mV$
and denote by $\mE_0$ the edge set of the subgraph of $\mG$ induced by $\mV_0$. We denote by $\partial \mV_0$ the \emph{edge boundary} of $\mV_0$, i.e., the (nonempty, since $\mV\setminus \mV_0 \ne \emptyset$ and $\mG$ is connected) set of edges between elements of $\mV_0$ and elements of $\mV\setminus \mV_0$; and by $\delta \mV_0$ the \emph{vertex boundary} of $\mV_0$, i.e., the set of vertices in $\mG$ that are not elements of $\mV_0$ but are adjacent to at least one vertex in $\mV_0$.
The formal  Laplacian then takes the block structure
\begin{equation}\label{eq:block-dlapl}
\dLapl \equiv
\begin{pmatrix}
\dLapl_{\mV_0, \mV_0} & \dLapl_{\mV_0, \delta\mV_0} & 0 \\
\dLapl_{\delta \mV_0, \mV_0} & \dLapl_{\delta\mV_0, \delta\mV_0} & \dLapl_{\delta\mV_0, \widetilde{\mV_0}} \\
0 & \dLapl_{\widetilde{\mV_0}, \delta\mV_0} & \dLapl_{\widetilde{\mV_0}, \widetilde{\mV_0}}\end{pmatrix}
\end{equation}
with respect to the partition
\[
\mV=\mV_0\sqcup \delta \mV_0 \sqcup\widetilde{\mV_0} \]
where
$\widetilde{\mV_0}:=\mV\setminus(\mV_0\cup\delta \mV_0)$.

\subsubsection{Dirichlet energy}\label{sec:dirichlet-energy}
We want to study Laplacians acting on functions that satisfy
\emph{Dirichlet conditions at the vertex boundary} $\delta$, i.e. 
\begin{equation}\label{eq:diri-bc}
f(\mv)=0\qquad \hbox{for all }\mv\in \delta \mV_0.
\end{equation}

 Let $\natExt{\mV_0} : \C^{\mV_0} \to \C^\mV$ be the \textit{natural extension operator}, which extends functions supported on $\mV_0$ by 0 to the whole set $\mV$; its adjoint ${\natExt{\mV_0}}^{\! *} : \C^\mV \to \C^\mV_0$ is the restriction operator. 
 
 {We consider the quadratic form
 \[
 \dFormD(f) := \dForm(\natExt{\mV_0}f)
 \]
 (where here and throughout we write $\dForm(g):=\dForm(g,g)$); the corresponding form domain,
 \begin{equation}\label{eq:domdirform}
 D(\dFormD) = \{f \in \ell^2_{\nu_0}(\mV_0)~|~\natExt{\mV_0}f \in D(\dForm)\}= \{ f \in h^1(\mV_0)~|~\natExt{\mV_0}f \in D(\dForm)\},
 \end{equation}
 a subspace of $h^1(\mV_0)$, can be canonically identified with
 \begin{displaymath}
     \{ f \in D(\dForm)~|~f(\mv)=0 \text{ for all } \mv \in \mV \setminus \mV_0\}
     = \{ f \in D(\dForm)~|~\supp f \subset \mV_0\},
 \end{displaymath}
 which is seen to be a closed subspace of $D(\dForm)$ and hence of $h^1(\mV)$, via the isometry $\natExt{\mV_0}$ between the sets. We refer to the self-adjoint operator associated with $\dForm$ as the  Laplacian with Dirichlet conditions at $\delta\mV_0$, which is the (possibly unbounded) operator on $\ell^2_\nu(\mV_0)$ formally given by
 \[
 \dLaplD :={\natExt{\mV_0}}^{\! *} \dLapl \natExt{\mV_0}=\mathcal L_{\mV_0,\mV_0}
 \]
(with respect to the block decomposition in \eqref{eq:block-dlapl}).
 }

\begin{lemma}\label{lem:dlapld-selfadj}
Let $\mV_0\subsetneq \mV$. Then the following assertions hold. 
\begin{enumerate}
\item\label{item:monotonicity-ppsp} If $\dLapl$ has compact resolvent (and thus pure point spectrum), then so does $\dLaplD$.
\item\label{item:monotonicity-dir} Let $\mV'_0\subset \mV_0$. If $\dLapl$ has {compact resolvent}, then the 

eigenvalues of $\dLaplD$ and $\mathcal L^{\mathrm{D}}_{\mV'_0}$ satisfy
\[
\lambda_k(\mV_0)\le \lambda_k(\mV'_0),\qquad k=1,2,\ldots.
\]
\end{enumerate}
 \end{lemma}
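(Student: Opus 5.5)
Both assertions will be derived from the variational (form) characterisation of the operators together with the isometric identification of $D(\dFormD)$ with the closed subspace $\{f\in D(\dForm)~|~\supp f\subset \mV_0\}$ of $D(\dForm)$, established just before the statement.

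For (\ref{item:monotonicity-ppsp}), we use the standard equivalence: a nonnegative self-adjoint operator associated with a closed form has compact resolvent if and only if its form domain, with the form norm, is compactly embedded into the underlying Hilbert space. Since $\dLapl$ has compact resolvent, the embedding $(D(\dForm),\|\cdot\|_\dForm)\hookrightarrow\lSP(\mV)$ is compact. Let $(f_n)$ be bounded in $D(\dFormD)$ with respect to $\|f\|^2_{\lSP(\mV_0)}+\dFormD(f)$. Then $g_n:=\natExt{\mV_0}f_n$ is bounded in $D(\dForm)$, because $\natExt{\mV_0}$ is an isometry both for the $\ell^2$-norms (as $\nu_0=\nu|_{\mV_0}$) and for the forms. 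Hence a subsequence $g_{n_k}$ converges in $\lSP(\mV)$. Restricting via ${\natExt{\mV_0}}^{\!*}$, which is a contraction, gives convergence of $f_{n_k}$ in $\ell^2_{\nu_0}(\mV_0)$. So $D(\dFormD)$ embeds compactly and $\dLaplD$ has compact resolvent. A small preliminary point is that $D(\dFormD)$ is dense in $\ell^2_{\nu_0}(\mV_0)$, so that the associated operator is genuinely self-adjoint on the whole space. This holds because $c_{00}(\mV_0)\subset D(\dFormD)$ via $h^1_0(\mV)\subset D(\dForm)$.

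For (\ref{item:monotonicity-dir}), we use the Courant--Fischer min--max formula
\[
\lambda_k(\mV_0)=\min_{\substack{U\subset D(\dFormD)\\ \dim U=k}}\ \max_{0\neq f\in U}\frac{\dFormD(f)}{\|f\|^2_{\ell^2_{\nu_0}(\mV_0)}},
\]
which is valid by (\ref{item:monotonicity-ppsp}). The same formula holds for $\mV_0'$. Composing the extension operators, $\natExt{\mV_0}\circ\natExt{\mV_0'\to\mV_0}=\natExt{\mV_0'}$, where $\natExt{\mV_0'\to\mV_0}$ extends by zero from $\mV_0'$ to $\mV_0$. Thus $D(\mathcal Q^{\mathrm D}_{\mV_0'})$ is mapped isometrically into $D(\dFormD)$, preserving both the form values and the $\ell^2$-norms. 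Every $k$-dimensional test space for $\mV_0'$ is therefore carried to a $k$-dimensional test space for $\mV_0$ with identical Rayleigh quotients, and hence $\lambda_k(\mV_0)\le\lambda_k(\mV_0')$.

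The main obstacle is modest: it is checking that the identifications are truly isometric for the form norms. Concretely, one must verify $\dFormD(f)=\dForm(\natExt{\mV_0}f)$ by definition, including the boundary edges in $\partial\mV_0$, which contribute $\omega(\me)|f(\mv)|^2$. One must also confirm that the closedness of $D(\dFormD)$ is inherited from $D(\dForm)$, since $\{\supp f\subset\mV_0\}$ is the intersection of kernels of point evaluations. Point evaluations are continuous on $h^1(\mV)$ because $\nu(\mv)>0$. The remaining case, $\mV_0$ finite, is trivial: there both spaces are finite-dimensional.
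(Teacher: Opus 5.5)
Your proposal is correct and follows essentially the same route as the paper. In (1), compactness of $D(\dForm)\hookrightarrow\ell^2_\nu(\mV)$ is transferred to the Dirichlet form domain through the isometric zero-extension, and in (2) Courant--Fischer applies because zero-extension carries $k$-dimensional test spaces to $k$-dimensional test spaces with unchanged Rayleigh quotients. Your factorisation of the extension from $\mV'_0$ through $\mV_0$ is slightly more precise than the paper's wording: the paper literally only maps test spaces into $D(\dForm)$ and leaves the version for the pair $\mV'_0\subset\mV_0$ implicit.
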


We recall that, by standard theory, $\dLapl$ has compact resolvent if and only if the embedding of the form domain $D(\dForm)$ into $\ell^2_\nu(\mV)$ is compact.

In the case of finite graphs, the eigenvalue monotonicity property (3) was observed in~\cite[Theorem~2.3]{Fri93}.

 \begin{proof}

(1) The assumption that $\dLapl$ has compact resolvent is equivalent to $D(\dForm)$ being compactly embedded into $\ell^2_\nu(\mV)$.  Let $(f_n)_{n\in \N}$ be a sequence in the unit ball of $D(\dFormD)$. Therefore,
\[
\|\natExt{\mV_0}f_n\|_{\ell^2_\nu(\mV)}
+\dForm(\natExt{\mV_0}f_n)\le 1\qquad \hbox{for all }n\in \N,
\]
i.e., $(\natExt{\mV_0}f_n)_{n\in \N}$ lies in the unit ball of $D(\dForm)$, hence by compactness it has a subsequence $(\natExt{\mV_0}f_{n_k})_{k\in \N}$ that converges in $\ell^2_\nu(\mV)$; because $\natExt{\mV_0}$ is an isometry, $(f_{n_k})_{n\in \N}$ must be convergent in $\ell^2_{\nu_0}(\mV_0)$, too: i.e., $D(\dFormD)$ is compactly embedded into $\ell^2_{\nu_0}(\mV_0)$ and the assertion follows.

(2) The assertion follows immediately from the Courant--Fischer min-max principle, since the image under $\natExt{\mV_0}$ of any $k$-dimensional subspace of $D(\dFormD)$ is a $k$-dimensional subspace of $D(\dForm)$.
\end{proof}

We call $\dFormD$ the \emph{Dirichlet energy form} (with respect to $\mV_0$).
A direct computation shows that
\begin{equation}\label{eq:dformd-as-schroed}
\begin{split}    
\dFormD(g)&=\|{\mathcal I}^\top \natExt{\mV_0}g\|^2_{\ell^2_\omega(\mE)}\\
&=\sum_{\me=\{\mv,\mw\}\in\mE_0}\omega(\me)|g(\mv)-g(\mw)|^2+\sum_{\mw\in\mV_0}\degout(\mw)|g(\mw)|^2
\end{split}
\end{equation}
for all $g\in D(\dFormD)$,

where we recall that $\mathcal I$ denotes the signed incidence matrix of $\mG$ (cf. \eqref{eq:in-form}), which we interpret as an operator from $\C^\mV$ to $\C^\mE$, and where \(\degout:\mV_0\rightarrow [0,\infty)\) is 

given by
\begin{equation}\label{eq:degout-def}
\degout(\mw):=(
-\mathcal L_{\mV_0,\delta\mV_0}{\mathbf{1}_{\delta\mV_0}})(\mw)=
\frac{1}{\nu(\mv)}\sum_{\me=(\mv,\mw)\in \partial\mV_0} \omega(\me),\qquad \mw\in \mV_0;
\end{equation}
we also write
\begin{equation}\label{eq:degdegout-def}
D_{\mV_0,{\mathrm{out}}}(\mv):=\diag(\degout(\mw))_{\mw\in\mV_0}.
\end{equation}

	{(In particular, $\degout(\mv)=0$ if $\mv\in\mV_0$, as a vertex in $\mV$, is only adjacent to vertices that also lie in $\mV_0$.)}

{
We observe that $\dFormD$ is bounded from below; indeed, it is immediate that, for any $\mV_0 \subset \mV$, $\dFormD (f) \geq 0$ for all $f \in D(\dFormD)$. Under the assumption that $\dLapl$ and thus $\dLaplD$ has compact resolvent, there is thus a smallest eigenvalue}
\begin{equation}\label{eq:minmax-dir}
	\begin{split}
	\lambda_1^{\mathrm{D}}(\mV_0) :&=\min\left\{\frac{\DirForm{\mV_0}(g)}{\|g\|_{\ell^2_{\nu_0}(\mV_0)}^2}~|~g\in D(\DirForm{\mV_0})\setminus\{0\}\right\}\\
	& = \min\left\{\frac{\dForm(f)}{\|f\|_{\lSP(\mV)}^2}~|~f\in D(\dForm)\setminus\{0\},~\supp(f)\subset\mV_0\right\},
	\end{split}
\end{equation}
where the minimum is attained by the eigenvectors corresponding to the eigenvalue \(\lambda_1^{\mathrm{D}}(\mV_0)\). {Note that in fact $\lambda_1^{\mathrm{D}}(\mV_0) > 0$ as long as $\mV_0$ is a proper subset of $\mV$ (and $\mG$ is connected), as follows from a standard argument {(see e.g. \cite[Proposition~2.5]{BifMug25})}.

\subsubsection{Boundaryless energy}\label{sec:boundaryless-energy}
In the following, we will consider other type of energies. However, unlike in the Dirichlet case, there will be no natural extension operators that allow us to define closed forms of the Laplacians on subgraphs.  We will restrict ourselves to $D(Q) = h^1(\mV)$ for the other energies that we consider; also, when dealing with the boundaryless energy and with the Neumann energy (see \autoref{sec:neumann-energy}), we will always assume that $\nu(\mV) < \infty$.

The first of these other Laplacians is simply the Laplacian of the subgraph $\mG_0$ induced by $\mV_0$, considered as a graph in its own right and without reference to $\mG\setminus\mG_0$ or $\mV\setminus\mV_0$. This operator thus acts on $\ell^2_{\nu_0}(\mV_0)$ without seeing the ambient graph. At least formally, in terms of the block decomposition in~\eqref{eq:block-dlapl}, this amounts to studying
\begin{equation}\label{eq:dlapl-boun-schr}
\dLaplB={\mathcal L}_{\mV_0,\mV_0}-D_{\mV_0,\mathrm{out}}=\dLaplD-D_{\mV_0,\mathrm{out}},
\end{equation}
where $D_{\mV_0,\mathrm{out}}$ is defined in~\eqref{eq:degdegout-def}.

We will, however, study the operator via its associated quadratic form \(\BouForm{\mV_0}\), which is easily seen to be given by
	\[
    \begin{split}
    \dFormB(g)&=
    \|P_{\mE_0}\mathcal I^\top \natExt{\mV_0}
    g \|^2_{\ell^2_{\omega}(\mE)}\\
    &=    \sum_{\me=\{\mv,\mw\}\in\mE_0}\omega(\me)|g(\mv)-g(\mw)|^2
	\end{split}
    \]
on the maximal domain
	\[D(\dFormB)=\left\{g\in \ell^2_{\nu_0}(\mV_0)~|~\sum_{\me=\{\mv,\mw\}\in\mE_0}\omega(\me)|g(\mv)-g(\mw)|^2<\infty\right\}=h^1(\mV_0).\]

(Here  $P_{\mE_0}$ is the orthogonal projector of $\ell^2_{\omega}(\mE)$ onto $\ell^2_{\omega_0}(\mE_0)$.)

We then take $\dLaplB$ to be the operator on $\ell^2_{\nu_0}(\mV_0)$ associated with $\dFormB$. By the general theory of graph Laplacians \cite{HaeKelLen12}, $\dLaplB$ is self-adjoint. We will call $\dLaplB$ the \emph{boundaryless Laplacian} since it is the natural maximal Laplacian on $\mG_0$ which does not see the boundary of $\mG_0$ as a subgraph of $\mG$.

We call $\dFormB$ the \emph{boundaryless energy form} (with respect to $\mV_0$); and if $D(\dFormB)$ is compactly embedded into $\ell^2_\nu(\mV_0)$ and, hence, $\dLaplB$ has pure point spectrum, then we call
\begin{equation}\label{eq:lambda2-rayl-chung}
\llB(\mV_0)=\min\left\{\frac{\dFormB(g)}{\|g\|_{\ell^2_{\nu_0}(\mV_0)}^2}~|~g\in D(\dFormB)\setminus\{0\},~\sum_{\mv\in\mV_0}\nu(\mv)g(\mv)=0\right\}.
\end{equation}
the  \emph{boundaryless energy} (with respect to $\mV_0$).
In particular, the associated discrete Laplacian on finite connected subgraphs {induced by vertex subsets $\mV_0$ with at least two vertices} has discrete spectrum and \(\llB(\mV_0)\) is well-defined {and strictly positive}.
{We use the notation $\llB(\mV_0)$ because, under our standing assumption that $\nu(\mV) < \infty$ and hence $\nu(\mV_0)<\infty$,} this is the second (variational) eigenvalue of $\dLaplB$, the first eigenvalue being $0$, with the constant functions being the corresponding eigenvectors.

In order to define the spectral energies we need additional assumptions on the underlying graph.
    
\begin{assum}\label{assum:unif-can-com}
There exists $C>0$ such that for all $\mV_0\subset \mV$ that induce a connected subgraph, and all $f\in D(\dFormB{\mV_0})$ with mean value zero on $\mV_0$ (that is, for which $\sum_{\mv \in \mV_0} \nu(\mv)f(\mv) = 0$), there holds
\begin{equation}\label{eq:def-uniform-can-comp}
	\|f\|_{\ell^\infty(\mV_0)}\leq C  (\BouForm{\mV_0}(f))^\frac12  .
\end{equation}

\end{assum}

{This is a kind of Poincar\'e inequality, which we require to hold uniformly in $\mV_0 \subset \mV$. We will discuss sufficient conditions for this property to hold for a given graph $\mG$ in \autoref{subsec:sufficientunif}.}

For $\mV_0 \subset \mV$, $\mV_0 \neq \emptyset$ we then define
\begin{equation}\label{eq:llb}
		\llB(\mV_0)=\min\left\{\frac{\BouForm{\mV_0}(g)}{\|g\|_{\ell^2_{\nu_0}(\mV_0)}^2}~|~g\in D(\BouForm{\mV_0})\setminus\{0\},~\sum_{\mv\in\mV_0}\nu(\mv)g(\mv)=0\right\}
\end{equation}
if each connected component of the subgraph induced by \(\mV_0\) has at least two elements. As in the Neumann case, if $\mV_0$ consists of a single element, then $\llB(\mV_0) = \infty$. Note that the subgraph induced by \(\mV_0\) is connected if and only if \(\llB(\mV_0)\) is positive, and, in that case, \(\llB(\mV_0)\) is the first positive eigenvalue of the discrete Laplacian on \(\ell^2_{\nu_0}(\mV_0)\). We refer to \(\llB(\mV_0)\) as the {boundaryless} energy of \(\mV_0\).

\subsubsection{Neumann energy}\label{sec:neumann-energy}

Under the same assumptions as for the boundaryless energy, namely $\nu(\mV) < \infty$ and $D(Q) = h^1(\mV)$,
given $\mV_0\subsetneq \mV$ and $f:\mV\to \C$, we refer to
\begin{equation}\label{eq:neum-bc}
\frac{1}{\nu(\mv)}\sum_{\me=\{\mv,\mw\}\in \partial\mV_0} \omega(\me)(f(\mv)-f(\mw))=0\qquad \hbox{for all }\mv\in \delta \mV_0
\end{equation}
as  \emph{Neumann conditions at the vertex boundary} $\delta \mV_0$ of $\mV_0$.

Following \cite[Section~2]{ShiYu25}, let $\neuExt{\mV_0}$ be the \textit{Neumann extension operator}, which maps functions supported on $\mV_0$ to the whole set $\mV$ as follows (we recall the vertex decomposition was introduced in \eqref{eq:block-dlapl}):
\begin{displaymath}
    \neuExt{\mV_0}f(\mv) = 
    \begin{cases}
        f(\mv) \qquad &\text{if } \mv \in \mV_0,\\
        \frac{\sum\limits_{\mw:\me=\{\mv,\mw\}\in\partial\mV_0}\omega(\me)f(\mw)}{\sum\limits_{\mw:\me=\{\mv,\mw\}\in\partial\mV_0}\omega(\me)} \qquad &\text{if } \mv \in \delta\mV_0,\\
        0 \qquad &\text{if } \mv \in \widetilde{\mV_0}
   \end{cases}
\end{displaymath}
(so that, in particular, the Neumann conditions \eqref{eq:neum-bc} hold at every vertex in $\delta\mV_0$).

We consider the quadratic form given by
\[
\dFormN(g):=\|P_{\overline{\mE_0}}\mathcal I^\top \neuExt{\mV_0}
    g \|^2_{\ell^2_{\omega}(\mE)} = \sum_{\me = \{ \mv, \mw\}\in \overline {\mE_0}} \omega(\me) \left | \neuExt{\mV_0} g(\mv) - \neuExt{\mV_0} g(\mw)\right |^2 
\]
on the maximal domain
	\[D(\dFormN):=\{g\in\ell^2_{\nu_0}(\mV_0)~|~\dFormN(g) < \infty\},
	\]
where $\overline{\mE_0}:=\mE_0\cup \partial \mV_0$ is the set of all edges in the subgraph induced by $\mV_0$ along with all edges that link one vertex in $\mV_0$ with one vertex outside $\mV_0$; {and $P_{\overline{\mE_0}}$ is the orthogonal projector of $\ell^2_{\omega}(\mE)$ onto $\ell^2_{\omega_0}(\overline{\mE_0})$}. 

\begin{lemma}
The quadratic form    $(\dFormN,D(\dFormN))$ is closed.
\end{lemma}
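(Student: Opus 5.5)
We argue via lower semicontinuity: the form $\dFormN$ is nonnegative and defined on its maximal domain, so it is closed exactly when $g\mapsto\dFormN(g)$ is lower semicontinuous on $\ell^2_{\nu_0}(\mV_0)$ (with value $+\infty$ off the domain). Equivalently, we check directly that if $(g_n)$ is Cauchy in the form norm $\|g\|^2_{\ell^2_{\nu_0}(\mV_0)}+\dFormN(g)$, then its $\ell^2$-limit $g$ lies in $D(\dFormN)$ and $\dFormN(g_n-g)\to 0$.

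First we check pointwise continuity of the Neumann extension. Convergence in $\ell^2_{\nu_0}(\mV_0)$ implies pointwise convergence on $\mV_0$, since $\nu(\mv)>0$ for each $\mv$. On $\mV_0$ we have $\neuExt{\mV_0}g=g$. At each $\mv\in\delta\mV_0$ the value $\neuExt{\mV_0}g(\mv)$ is a weighted average of the values $g(\mw)$ over neighbours $\mw\in\mV_0$. By local finiteness \eqref{eq:omega-finite}, the weights $\omega(\me)/\sum\omega$ are summable probabilities. There may be infinitely many neighbours, so pointwise convergence of $g_n$ alone does not obviously give convergence of these averages. This is the main obstacle. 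We handle it via Cauchy--Schwarz with respect to the weights $\omega(\me)\nu(\mw)$: for a fixed $\mv\in\delta\mV_0$,
\[
|\neuExt{\mV_0}h(\mv)|\le \frac{\sum_{\mw}\omega(\{\mv,\mw\})|h(\mw)|}{\sum_\mw\omega(\{\mv,\mw\})}\le \frac{\bigl(\sum_\mw \omega(\{\mv,\mw\})^2/\nu(\mw)\bigr)^{1/2}\|h\|_{\ell^2_{\nu_0}(\mV_0)}}{\sum_\mw\omega(\{\mv,\mw\})}.
\]
If the first sum is infinite, we instead use the form bound. We have $|h(\mw)-\neuExt{\mV_0}h(\mv)|^2\omega(\{\mv,\mw\})$ summed over $\mw$ is at most $\dFormN(h)$, and averaging gives $|\neuExt{\mV_0}h(\mv)|\le |h(\mw_0)|+(\dFormN(h)/\omega(\{\mv,\mw_0\}))^{1/2}$ for any fixed neighbour $\mw_0\in\mV_0$. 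This second bound always works. Applied to $h=g_n-g_m$, it shows that $(\neuExt{\mV_0}g_n(\mv))_n$ is Cauchy for every $\mv\in\delta\mV_0$, so it has a limit $c(\mv)$.

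Next we identify the limit. We need $c(\mv)=\neuExt{\mV_0}g(\mv)$. The functional $h\mapsto h(\mw_0)-\neuExt{\mV_0}h(\mv)$ is controlled by $\dFormN(h)^{1/2}$, and $\dFormN(g_n-g_m)\to0$. So we consider the edge-wise differences $\neuExt{\mV_0}g_n(\mv)-g_n(\mw)$, which converge in $\ell^2_\omega$ over the edges at $\mv$ to some limit $\phi$ that equals $c(\mv)-g(\mw)$ pointwise. The weighted mean of $\neuExt{\mV_0}g_n(\mv)-g_n(\mw)$ over $\mw$, with weights $\omega$, vanishes by definition of the Neumann extension. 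Since the normalized weights are a finite measure, $\ell^2_\omega$-convergence implies convergence of these weighted means. Hence $\sum_\mw\omega(\{\mv,\mw\})(c(\mv)-g(\mw))=0$, that is, $c(\mv)=\neuExt{\mV_0}g(\mv)$.

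Finally we conclude by Fatou's lemma. The function $\mathcal I^\top\neuExt{\mV_0}g_n$ restricted to $\overline{\mE_0}$ converges pointwise on edges to $\mathcal I^\top\neuExt{\mV_0}g$. Moreover $\dFormN(g_n-g_m)=\|P_{\overline{\mE_0}}\mathcal I^\top \neuExt{\mV_0}(g_n-g_m)\|^2$, using linearity of $\neuExt{\mV_0}$. Fatou's lemma applied to $\sum_{\me}\omega(\me)|\cdot|^2$ then gives $\dFormN(g)\le\liminf\dFormN(g_n)<\infty$. It also gives $\dFormN(g_n-g)\le\liminf_m\dFormN(g_n-g_m)\to0$ as $n\to\infty$. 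Hence $D(\dFormN)$ is complete in the form norm, so the form is closed.
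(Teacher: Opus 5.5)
Your proof is correct and takes the same approach as the paper. You take a Cauchy sequence in the form norm, pass to its $\ell^2_{\nu_0}(\mV_0)$-limit $g$, use pointwise convergence to identify the limit of the edge differences, and conclude that $g\in D(\dFormN)$ with $\dFormN(g_n-g)\to 0$. The one step where you go beyond the paper is at vertices $\mv\in\delta\mV_0$ with infinitely many neighbours in $\mV_0$, where the paper simply asserts that pointwise convergence of $g_n$ carries over to $\neuExt{\mV_0}g_n(\mv)$. Your single-edge bound, together with the $\ell^2_\omega\subset\ell^1_\omega$ argument on the edges at $\mv$, supplies the justification this step needs; pointwise convergence of $g_n$ alone would not suffice there.
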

\begin{proof}
    One easily verifies that $\NeuForm{\mV_0}$ is a nonnegative, bilinear form. Let us now consider a Cauchy sequence $g_n$ with respect to the form norm $\|\cdot\|_{\ell^2_{\nu_0}}^2+\dFormN(\cdot)$. Indeed, then $g_n$ is a Cauchy sequence with respect to $\ell^2_{\nu_0}(\mV_0)$ and $\mathcal I^\top \neuExt{\mV_0} g$ is a Cauchy sequence with respect to $\ell^2_\omega(\mE)$. Then there exists $g\in \ell^2_{\nu_0}(\mV_0)$ such that $g_n \to g$ in $\ell^2_{\nu_0}(\mV_0)$ and, since the convergence is in particular pointwise, we conclude that $\mathcal I^\top \neuExt{\mV_0} g_n \to \mathcal I^\top \neuExt{\mV_0} g$ in $\ell^2_{\omega}(\mE)$. In particular, $g\in D(Q_{\mV_0})$ and $g_n \to g$ with respect to the form norm. 
    
\end{proof}

We refer to the associated self-adjoint operator as the Laplacian with Neumann boundary conditions at $\delta\mV_0$; this is the (possibly unbounded) operator on $\ell^2_\nu(\mV_0)$ formally given by
\begin{equation}\label{eq:neum-lapl-schr-new}
\begin{cases}
    &\dLaplN g(v) = \frac{1}{\nu(v)} \sum_{\me=\{\mv, \mw\}} \omega(\me) (g(\mv) - g(\mw))  \\
    &\frac{1}{\nu(\mv)}\sum_{\me=\{\mv,\mw\}\in \partial\mV_0} \omega(\me)(f(\mv)-f(\mw))=0\qquad \hbox{for all }\mv\in \delta \mV_0
\end{cases}
\end{equation}

Note that the argument in the proof of \autoref{lem:dlapld-selfadj} cannot be mimicked here, since $\neuExt{\mV_0}$ generally fails to be an isometry.  If $D(\dFormN)$ is compactly embedded into $\ell^2_\nu(\mV_0)$ and, hence, the associated operator $\dLaplN$ on $\ell^2_\nu(\mV_0)$ has pure point spectrum, then we call
\begin{equation}\label{eq:lambda2-rayl-chung-2}
\llN(\mV_0):=\min\left\{\frac{\dFormN(g)}{\|g\|_{\ell^2_{\nu_0}(\mV_0)}^2}~|~g\in D(\dFormN)\setminus\{0\},~\sum_{\mv\in\mV_0}\nu(\mv)g(\mv)=0\right\}.
\end{equation}
the  \emph{Neumann energy} (with respect to $\mV_0$); it is immediate that $\llN(\mV_0)=0$ if and only if the subgraph induced by $\mV_0$ is disconnected. 
As in the boundaryless case, the notation $\llN(\mV_0)$ reflects that, at least if $\dLaplN$ has a discrete spectrum, {zero will be the smallest eigenvalue of $\dLaplN$ will be zero (since $\nu(\mV) < \infty$ and thus the constant vectors will be in the null space of $\dLaplN$), and $\llN$ will be the second.} Similarly, as presented in \cite[Section~8.2]{Chu97} the Neumann conditions \eqref{eq:neum-bc} arise naturally via an alternative variational characterization of $\llN(\mV_0)$, but we do not discuss the approach here further.

{The question arises as to when the minimum in \eqref{eq:lambda2-rayl-chung-2} actually exists.} If $\mV_0$ consists of a single element, then no vectors satisfy the orthogonality condition, the set over which we are minimizing is empty, and so $\llN(\mV_0) = \infty$. For all sets $\mV_0$ with at least two elements, {it is enough to know that $D(\dFormN)$ embeds compactly into $\ell^2_{\nu_0}(\mV_0)$:} the existence of such a minimum is guaranteed by the following assumption, which will take on a role analogous to the canonical compactifiability condition of \autoref{def:canonically-compactifiable}, as well as to \autoref{assum:unif-can-com} for the boundaryless energy. We will impose this Poincar\'e-type inequality assumption throughout, whenever we consider Neumann energies.

\begin{assum}\label{assum:neum-exist}
There exists $C>0$ such that for all $\mV_0\subset \mV$ that induce a connected subgraph, and all $f\in D(\dFormN)$ with mean value zero on $\mV_0$ (that is, for which $\sum_{\mv \in \mV_0} \nu(\mv)f(\mv) = 0$), there holds
\[
\|f\|_{\ell^\infty(\mV_0)}\le C (\NeuForm{\mV_0}(f) )^\frac{1}{2}.
\]
\end{assum}

As with \autoref{assum:unif-can-com}, we will discuss sufficient conditions for this property in \autoref{subsec:sufficientunif}.

\begin{rem}\label{rem:stronguniformcanN}
	Under \autoref{assum:neum-exist} every minimizer of \eqref{eq:lambda2-rayl-chung-2} satisfies
	\[
	\| f\|_{\ell^\infty(\mV_0 \cup \delta \mV_0)} =\| f\|_{\ell^\infty(\mV_0)} \le C \NeuForm{\mV_0}(f)^\frac{1}{2},
	\]
    where the equality follows from~\eqref{eq:neum-bc}.

    Conversely, if the subgraph of $\mG$ with vertex set $\mV_0\cup \delta \mV_0$ and edge set $\mE_0\cup \partial \mV_0$ is canonically compactifiable, then also \autoref{assum:neum-exist} holds.
\end{rem}

We stress that \autoref{lem:dlapld-selfadj}.\eqref{item:monotonicity-dir} does not have a counterpart for $\llN(\mV_0)$.

\begin{rem}
\label{rem:dbn-inequalities}

It was proved in~\cite[Theorem~1.4 and Theorem 1.5]{ShiYu25} that on any \emph{finite} weighted graph $\lambda^{\mathrm{B}}_i(\mV_0)\le \lambda^{\mathrm{N}}_i(\mV_0)\le \lambda^{\mathrm{D}}_i(\mV_0)$, for all $i$ (where the higher eigenvalues can be defined by the natural min-max variational characterization which generalizes our definition of $\llD$, $\llB$ and $\llN$). Also, by \cite[Theorem~1.1]{ShiYu25}, if the first inequality is actually an equality, then there is a function on $\mG$ that minimizes both Neumann and boundaryless energy and that satisfies both Dirichlet and Neumann conditions at $\delta\mV_0$. In our setting, the inequality $\llB(\mV_0) \leq \llN(\mV_0)$, for any $\mV_0$ for which both quantities are well defined, is an immediate consequence of the respective variational characterizations \eqref{eq:llb} and \eqref{eq:lambda2-rayl-chung-2}, since $D(\dFormN) \subset D(\dFormB)$, the orthogonality condition is the same in both cases, and $\dFormN(g) \geq \dFormB(g)$ for all $g \in D(\dFormN)$. If one defines the higher eigenvalues in our setting, the same variational arguments will yield that the natural inequalities $\lambda^{\mathrm{B}}_i(\mV_0)\le \lambda^{\mathrm{N}}_i(\mV_0)\le \lambda^{\mathrm{D}}_i(\mV_0)$ will continue to hold under appropriate assumptions on $\mV_0$; however, we will not need these more general eigenvalues here and so do not go into details.
\end{rem}

\begin{exa}
\label{exa:prismatic}
We give a simple concrete example comparing the three different Laplacian energies discussed above.
Let $\mG$ be the prism graph in \autoref{fig:prisma}, {we assume that $\nu(\mv) = 1$ for all vertices $\mv$, and $\omega(\me) = 1$ for all edges $\me$.}
	\begin{figure}[H]
	\begin{tikzpicture}
\coordinate (v1) at (0,0);
\coordinate (v2) at (3,0);
\coordinate (v3) at (1.5,1.68);
\coordinate (w1) at (0,1.9);
\coordinate (w2) at (3,1.9);
\coordinate (w3) at (1.5,3.58);
\coordinate (z1) at (0,3.8);
\coordinate (z2) at (3,3.8);
\coordinate (z3) at (1.5,5.48);
\draw[fill] (v1) circle (1.95pt) node[anchor=east] {$\mv_1$};
\draw[fill] (v2) circle (1.95pt) node[anchor=west] {$\mv_2$};
\draw[fill] (v3) circle (1.95pt) node[anchor=east] {$\mv_3$};
\draw[fill] (w1) circle (1.95pt) node[anchor=east] {$\mw_1$};
\draw[fill] (w2) circle (1.95pt) node[anchor=west] {$\mw_2$};
\draw[fill] (w3) circle (1.95pt) node[anchor=east] {$\mw_3$};
\draw[fill] (z1) circle (1.95pt) node[anchor=east] {$\mz_1$};
\draw[fill] (z2) circle (1.95pt) node[anchor=west] {$\mz_2$};
\draw[fill] (z3) circle (1.95pt) node[anchor=east] {$\mz_3$};
\draw (v1) -- (w1) -- (z1);
\draw (v2) -- (w2) -- (z2);
\draw (v3) -- (w3) -- (z3);
\draw (v1) -- (v2) -- (v3) -- cycle;
\draw (w1) -- (w2) -- (w3) -- cycle;
\draw (z1) -- (z2) -- (z3) -- cycle;
\end{tikzpicture}
\caption{A prism graph}\label{fig:prisma}
	\end{figure}
Consider the subgraphs induced by three different choices of $\mV_0$, namely the subgraph $\mG'_0$ and $\mG''_0$ induced by $\mV'_0=\{\mv_1,\mv_2,\mv_3\}$, $\mV''_0=\{\mv_1,\mv_2,\mw_2\}$, $\mV'''_0=\{\mv_1,\mw_1,\mz_1\}$, respectively.	
	\begin{figure}[H]
	\begin{tikzpicture}
\coordinate (v1) at (0,0);
\coordinate (v2) at (3,0);
\coordinate (v3) at (1.5,1.68);
\coordinate (w1) at (0,1.9);
\coordinate (w2) at (3,1.9);
\coordinate (w3) at (1.5,3.58);
\draw[dashed] (v1) -- (w1);
\draw[dashed] (v2) -- (w2);
\draw[dashed] (v3) -- (w3);
\draw (v1) -- (v2) -- (v3) -- cycle;
\draw[fill] (v1) circle (1.95pt) node[anchor=east] {$\mv_1$};
\draw[fill] (v2) circle (1.95pt) node[anchor=west] {$\mv_2$};
\draw[fill] (v3) circle (1.95pt) node[anchor=east] {$\mv_3$};
\draw[fill=white] (w1) circle (1.95pt) node[anchor=east] {$\mw_1$};
\draw[fill=white] (w2) circle (1.95pt) node[anchor=west] {$\mw_2$};
\draw[fill=white] (w3) circle (1.95pt) node[anchor=east] {$\mw_3$};
\end{tikzpicture}
\qquad
	\begin{tikzpicture}
\coordinate (v1) at (0,0);
\coordinate (v2) at (3,0);
\coordinate (v3) at (1.5,1.68);
\coordinate (w1) at (0,1.9);
\coordinate (w2) at (3,1.9);
\coordinate (w3) at (1.5,3.58);
\coordinate (z2) at (3,3.8);
\draw (v1) -- (v2) -- (w2);
\draw[dashed] (v1) -- (w1) -- (w2) -- (w3);
\draw[dashed] (v1) -- (v3) -- (v2);
\draw[dashed] (w2) -- (z2);
\draw[fill] (v1) circle (1.95pt) node[anchor=east] {$\mv_1$};
\draw[fill] (v2) circle (1.95pt) node[anchor=west] {$\mv_2$};
\draw[fill=white] (v3) circle (1.95pt) node[anchor=east] {$\mv_3$};
\draw[fill=white] (w1) circle (1.95pt) node[anchor=east] {$\mw_1$};
\draw[fill] (w2) circle (1.95pt) node[anchor=west] {$\mw_2$};
\draw[fill=white] (z2) circle (1.95pt) node[anchor=west] {$\mz_2$};
\draw[fill=white] (w3) circle (1.95pt) node[anchor=east] {$\mw_3$};

\end{tikzpicture}
\quad
\qquad
	\begin{tikzpicture}
\coordinate (v1) at (0,0);
\coordinate (v2) at (3,0);
\coordinate (v3) at (1.5,1.68);
\coordinate (w1) at (0,1.9);
\coordinate (w2) at (3,1.9);
\coordinate (w3) at (1.5,3.58);
\coordinate (z1) at (0,3.8);
\coordinate (z2) at (3,3.8);
\coordinate (z3) at (1.5,5.48);
\draw (v1) -- (w1) -- (z1);
\draw[dashed] (v2) -- (v1) -- (v3);
\draw[dashed] (w2) -- (w1) -- (w3);
\draw[dashed] (z2) -- (z1) -- (z3);
\draw[fill] (v1) circle (1.95pt) node[anchor=east] {$\mv_1$};
\draw[fill=white] (v2) circle (1.95pt) node[anchor=west] {$\mv_2$};
\draw[fill=white] (v3) circle (1.95pt) node[anchor=east] {$\mv_3$};
\draw[fill] (w1) circle (1.95pt) node[anchor=east] {$\mw_1$};
\draw[fill=white] (w2) circle (1.95pt) node[anchor=west] {$\mw_2$};
\draw[fill=white] (w3) circle (1.95pt) node[anchor=east] {$\mw_3$};
\draw[fill] (z1) circle (1.95pt) node[anchor=east] {$\mz_1$};
\draw[fill=white] (z2) circle (1.95pt) node[anchor=west] {$\mz_2$};
\draw[fill=white] (z3) circle (1.95pt) node[anchor=east] {$\mz_3$};

\end{tikzpicture}
\caption{Three subgraphs $\mG'_0,\mG''_0,\mG'''_0$ of the prism graph $\mG$ in \autoref{fig:prisma} induced by $\mV'_0=\{\mv_1,\mv_2,\mv_3\}$, $\mV''_0=\{\mv_1,\mv_2,\mw_2\}$, and $\mV'''_0=\{\mv_1,\mw_1,\mz_1\}$, respectively. Here dashed lines denote edges in $\partial \mV_0$ and white dots represent vertices in $\delta \mV_0$.}\label{fig:prisma-parti}
	\end{figure}

For $\mG'_0$ we can check that
\[
\dLaplB=\dLaplN=
\begin{pmatrix}
2 & -1 & -1\\
-1 & 2 & -1\\
-1 & -1 & 2\\
\end{pmatrix}\qquad\hbox{and}\qquad
\dLaplD=
\begin{pmatrix}
3 & -1 & -1\\
-1 & 3 & -1\\
-1 & -1 & 3\\
\end{pmatrix}
\]
and therefore
\[
\llB(\mV'_0)=\llN(\mV'_0)=3,\qquad \llD(\mV'_0)=1.
\]
For $\mG''_0$,
\[
\dLaplB= 
\begin{pmatrix} 
1 & -1 &  0 \\ 
-1 &  2 & -1 \\ 
 0 & -1 &  1 
 \end{pmatrix},\qquad
\dLaplD=
\begin{pmatrix}
3 & -1 & 0\\
-1 & 3 & -1\\
0 & -1 & 4\\
\end{pmatrix},\qquad\hbox{and}\qquad
{\dLaplN=
\begin{pmatrix}
2 & -\frac32 & -\frac12\\
-\frac32 & \frac52 & -1\\
-\frac12 & -1 & \frac32\\
\end{pmatrix},
}
\]
so
\[
\llB(\mV''_0)=1,\qquad \llD(\mV''_0) {\approx 1.753},\qquad\hbox{and}\qquad { \llN(\mV''_0)=3-\frac{\sqrt{3}}{2} \approx 2.134},
\]
{where $\llD(\mV''_0)$ is, more precisely, the smallest root of $\lambda^3 - 10\lambda^2+31\lambda-29=0$.} Finally, for $\mG'''_0$,
\[
\dLaplB=\dLaplN=
\begin{pmatrix}
1 & -1 & 0\\
-1 & 2 & -1\\
0 & -1 & 1\\
\end{pmatrix}\qquad\hbox{and}\qquad
\dLaplD=
\begin{pmatrix}
3 & -1 & 0\\
-1 & 4 & -1\\
0 & -1 & 3\\
\end{pmatrix}
\]
and thus
\[
\llB(\mV'''_0)=\llN(\mV'''_0)=1,\qquad \llD(\mV'''_0)=2.
\]
In particular, this shows that different configurations may have smaller energy depending on the chosen realizations: in this example,
\begin{itemize}
\item $\llB(\mV'''_0)=\llB(\mV''_0)<\llB(\mV'_0)$,
\item $\llD(\mV'_0)<\llD(\mV''_0)<\llD(\mV'''_0)$,
\item $\llN(\mV'''_0)<\llN(\mV''_0)<\llN(\mV'_0)$.
\end{itemize}

\end{exa}

\subsection{Canonical compactifiability and beyond} \label{subsec:sufficientunif}

We first note that the three assumptions, canonical compactifiability (see \autoref{def:canonically-compactifiable}), \autoref{assum:unif-can-com} and \autoref{assum:neum-exist}, satisfy a natural hierarchy.

\begin{prop}
\label{prop:hierarchy}
    Let $\nu$ be finite. If $\mG$ satisfies \autoref{assum:unif-can-com}, then it satisfies \autoref{assum:neum-exist}. If it satisfies \autoref{assum:neum-exist}, then it is canonically compactifiable. Finally, if it is canonically compactifiable, then $h^1(\mV)$ embeds compactly in $\ell_\nu^2(\mV)$.
\end{prop}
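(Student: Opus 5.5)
The plan is to prove the three implications separately, each by a short variational comparison; the last one is \autoref{cor:comp-embed-formdomain} verbatim.

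\textbf{\autoref{assum:unif-can-com} $\Rightarrow$ \autoref{assum:neum-exist}.} Fix a connected $\mV_0$ and $f\in D(\dFormN)$ with mean value zero. On each edge of $\mE_0$ we have $\neuExt{\mV_0}f=f$, so the sum defining $\dFormN(f)$ contains all terms of $\dFormB(f)$ plus the nonnegative contributions of the edges in $\partial\mV_0$. Hence $\dFormB(f)\le\dFormN(f)$, and in particular $f\in D(\dFormB)$, as already noted in \autoref{rem:dbn-inequalities}. Then \eqref{eq:def-uniform-can-comp} gives
\[
\|f\|_{\ell^\infty(\mV_0)}\le C\,\dFormB(f)^{1/2}\le C\,\dFormN(f)^{1/2},
\]
with the same constant $C$.

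\textbf{\autoref{assum:neum-exist} $\Rightarrow$ canonical compactifiability.} This is the only step needing an idea. If $\mV$ is finite, every function is bounded, so assume $\mV$ is infinite.
\begin{enumerate}
\item Exhaust $\mV$ by an increasing sequence of finite, connected, proper subsets $\mV_1\subset\mV_2\subset\cdots$, for instance balls in the combinatorial distance around a fixed vertex.
\item For $f\in h^1(\mV)$, set $f_n:=f|_{\mV_n}$ and let $m_n$ be its $\nu$-mean over $\mV_n$.
\item Compare energies. Fix $\mw\in\delta\mV_n$. The value $\neuExt{\mV_n}f_n(\mw)$ is the $\omega$-weighted average of the values $f(\mv)$ over the neighbours $\mv\in\mV_n$ of $\mw$. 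A weighted mean minimises the weighted sum of squared deviations, so
\[
\sum_{\me=\{\mv,\mw\}\in\partial\mV_n}\omega(\me)\,\bigl|f(\mv)-\neuExt{\mV_n}f_n(\mw)\bigr|^2\le\sum_{\me=\{\mv,\mw\}\in\partial\mV_n}\omega(\me)\,|f(\mv)-f(\mw)|^2 .
\]
Summing over $\mw$ and adding the edges of $\mE_0$ yields $\dFormN(f_n)\le\dForm(f)$.
\item Apply \autoref{assum:neum-exist} to $f_n-m_n$, which has mean zero, and bound the mean by Cauchy--Schwarz:
\[
\|f_n-m_n\|_{\ell^\infty(\mV_n)}\le C\,\dForm(f)^{1/2},\qquad |m_n|\le\frac{\|f\|_{\ell^1_\nu}}{\nu(\mV_n)}\le\nu(\mV_1)^{-1/2}\,\|f\|_{\lSP(\mV)} .
\]
\item Combining, $\|f\|_{\ell^\infty(\mV_n)}\le \max(C,\nu(\mV_1)^{-1/2})\,\|f\|_{\dForm}$ with a constant independent of $n$. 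Letting $n\to\infty$ gives the continuous embedding $h^1(\mV)\hookrightarrow\ell^\infty(\mV)$.
\end{enumerate}

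\textbf{Canonical compactifiability $\Rightarrow$ compact embedding.} Since $\nu$ is finite, this is exactly \autoref{cor:comp-embed-formdomain}.

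The main obstacle is step 3 of the second implication. Neumann extensions do not restrict $\dForm$ monotonically in general, since $\neuExt{\mV_n}$ is not an isometry. The point to check carefully is that the boundary contributions are controlled by the weighted-mean minimisation property.
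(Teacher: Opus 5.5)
Your first and third implications coincide with the paper's. The middle implication takes a genuinely different route, and it is correct.

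The paper applies \autoref{assum:neum-exist} directly with $\mV_0=\mV$. Then $\partial\mV=\emptyset$, the Neumann form is just $\dForm$ on $h^1(\mV)$, and the reverse triangle inequality together with Cauchy--Schwarz on the mean give $|f(\mv)|\le C\dForm(f)^{1/2}+\nu(\mV)^{-1/2}\|f\|_{\lSP(\mV)}$ in one line.

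You instead use the assumption only on finite, proper, connected sets $\mV_n$. You transfer it to $\mV$ through the comparison $\NeuForm{\mV_n}(f|_{\mV_n})\le\dForm(f)$, which holds for two reasons. For each $\mw\in\delta\mV_n$, the value $\neuExt{\mV_n}f_n(\mw)$ is the $\omega$-weighted mean of the neighbouring values in $\mV_n$, so it minimises the weighted sum of squared deviations. Moreover, edges joining two vertices of $\delta\mV_n$ do not belong to $\overline{\mE_0}$.

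The paper's argument is shorter. Yours shows that the Poincar\'e inequality on finite proper subsets already suffices, so it does not depend on reading the assumption as covering $\mV_0=\mV$; the Neumann setup is formally introduced only for $\mV_0\subsetneq\mV$. The monotonicity of Neumann energy under restriction is a useful by-product.

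A few small repairs are needed:
\begin{enumerate}
\item Combinatorial balls need not be finite, because the paper assumes only \eqref{eq:omega-finite}, not local finiteness of $\mG$. Instead, take $\mV_n$ to be the union of paths joining a fixed base vertex to the first $n$ vertices of an enumeration of $\mV$. These sets are finite, connected, increasing, exhaust $\mV$, and are proper when $\mV$ is infinite.
\item In the mean estimate, take the $\ell^1_\nu$-norm over $\mV_n$. Cauchy--Schwarz then gives $|m_n|\le\nu(\mV_n)^{-1/2}\|f\|_{\lSP(\mV)}\le\nu(\mV_1)^{-1/2}\|f\|_{\lSP(\mV)}$. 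Taken over all of $\mV$, the stated middle inequality is false, although a uniform constant still results.
\item You silently use that $\NeuForm{\mV_n}$ is invariant under adding constants. This holds because $\neuExt{\mV_n}$ commutes with adding constants.
\item The final constant picks up a harmless factor $\sqrt2$.
\end{enumerate}
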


\begin{proof}
    The first statement is an immediate consequence of the relations $D(\NeuForm{\mV_0})\subset D(\BouForm{\mV_0}) $ and $\BouForm{\mV_0}(f) \le \NeuForm{\mV_0}(f)$ for all $f\in D(\NeuForm{\mV_0})$.

    For the second, we note that \autoref{assum:neum-exist} can be rewritten in the form
    \begin{displaymath}
        \left |f(\mv) - \frac{1}{\nu(\mV_0)} \sum_{\mw \in \mV_0} \nu(\mw) f(\mw)\right | \leq C\BouForm{\mV_0}(f)^\frac12  ,
    \end{displaymath}
    which we then apply to $\mV_0 = \mV$  and to an arbitrary $f \in h^1$. Applying the reverse triangle inequality to the left-hand side of the above inequality and then Cauchy--Schwarz to the sum
    \begin{displaymath}
        \sum_{\mw \in \mV} \nu(\mw)|f(\mw)|
    \end{displaymath}
    yields the claim.

The final statement was noted in \autoref{cor:comp-embed-formdomain}.
\end{proof}

\begin{rem}
\label{rem:unif-can-com}
    \autoref{assum:unif-can-com} guarantees that the subgraph $\mG_0$ of $\mG$  induced by $\mV_0$ is itself canonically compactifiable, for every $\mV_0\subset \mV$. 
     We hence call such graphs satisfying the assumption \emph{uniformly compactifiable}, and will provide sufficient conditions for this property in \autoref{subsec:sufficientunif}.
\end{rem}

We next give geometric conditions that guarantee that \autoref{assum:unif-can-com}, and hence all three assumptions, are satisfied.

\begin{defi}

    A\emph{path} $\mP$ between two vertices $\mv, \mw\in \mV$ is an ordered finite sequence of vertices in the graph $\mG$ starting with $\mv$ and ending with $\mw$, such that each consecutive pair of vertices is connected by an edge in $\mG$. We will also assume that no vertex appears twice in the path (such objects are sometimes also called \emph{simple paths}). We will say that a path is
    \emph{non-contractible} 
    
    if, for any two vertices $\mv_i$ and $\mv_j$ in $\mP$, whenever $\mv_i$ and $\mv_j$ are adjacent in $\mG$, they are also adjacent in $\mP$.
     Otherwise, we say that the path is \emph{contractible}.

\end{defi}

(See also \autoref{fig:contractible} below.)

\begin{rem}\label{rem:contractible}
Suppose $\mv,\mw\in V$ are connected by a path $\mv_1, \mv_2, \ldots, \mv_N$, then we can always construct a noncontractible path connecting $\mv,\mw$. Indeed, suppose the induced subgraph has a cycle $\mv_k, \ldots, \mv_{m+k}, \mv_k$ of length $m$, then we can replace the subpath $\mv_k, \ldots, \mv_{m+k}$ by the edge $\mv_k, \mv_{m+k}$, which we refer to as a \emph{contraction} of the path. We may then continue to contract the resulting path and since the number of the cycles in the  subgraph induced by the path $\mv_1, \mv_2, \ldots, \mv_N$ was finite, this procedure terminates after a finite number of steps and we obtain a noncontractible path connecting $\mv, \mw$.
\end{rem}

\begin{figure}[H]
\centering
\includegraphics{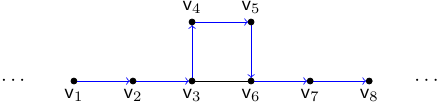}\hspace{1em} \includegraphics{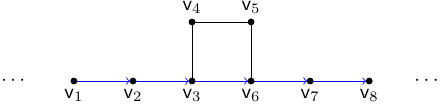}
\caption{Visualization of contractibility of paths. The path on the left can be contracted to the path on the right for example.}
\label{fig:contractible}
\end{figure}

If we try to extend \autoref{lem:finfingeohaekel} to arbitrary subgraphs, we run into problems since the diameter of an induced subgraph $\mG_0$ can be larger than that of $\mG$. We therefore introduce the following.

\begin{defi}
    We say that a graph has \textit{finite maximal path length} if 
    \begin{equation}
        L_{\text{max}}(\mG) := \sup_{\mv,\mw\in \mV}  \sup \sum_{i=1}^{n} \frac{1}{\omega(\mv_i, \mv_{i-1})} < \infty. 
    \end{equation}
    where the latter supremum is taken over all non-contractible, {finite} paths $\gamma=(\mv=\mv_0, \mv_1, \ldots, \mv_n=\mw)$ between $\mv,\mw$.
\end{defi}

\begin{prop}\label{prop:geometric-condition-for-ucc}
A connected graph \(\mG\) is \emph{uniformly compactifiable} if it has finite maximal path length.
\end{prop}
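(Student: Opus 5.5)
Uniform compactifiability is precisely \autoref{assum:unif-can-com}, so the plan is to verify, for every connected induced subgraph $\mG_0$ with vertex set $\mV_0$ and every $f\in D(\BouForm{\mV_0})$ with $\nu$-mean zero on $\mV_0$, the bound $\|f\|_{\ell^\infty(\mV_0)}\le C\,\BouForm{\mV_0}(f)^{1/2}$. The constant should depend only on $L_{\max}(\mG)$, and the argument should avoid any use of $\nu$ beyond positivity of the weights.

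The first step is a pointwise oscillation estimate. Fix $\mv,\mw\in\mV_0$. Since $\mG_0$ is connected, there is a path in $\mG_0$ joining them, and by \autoref{rem:contractible} I may take it non-contractible \emph{within} $\mG_0$. Because $\mG_0$ is an induced subgraph, two vertices of the path are adjacent in $\mG_0$ if and only if they are adjacent in $\mG$. Hence the path is also non-contractible in $\mG$, and its length is at most $L_{\max}(\mG)$; this is exactly why induced subgraphs are the natural class here. Along the path $\mv=\mv_0,\dots,\mv_n=\mw$, whose edges all lie in $\mE_0$, Cauchy--Schwarz gives
\[
|f(\mv)-f(\mw)|\le\sum_{i=1}^n \omega(\me_i)^{-1/2}\,\omega(\me_i)^{1/2}|f(\mv_i)-f(\mv_{i-1})|\le \Big(\sum_{i=1}^n \frac{1}{\omega(\me_i)}\Big)^{1/2}\BouForm{\mV_0}(f)^{1/2}.
\]
Here I use that the path is simple, so each edge is counted once in the form. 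It follows that $|f(\mv)-f(\mw)|\le L_{\max}(\mG)^{1/2}\,\BouForm{\mV_0}(f)^{1/2}$ uniformly in $\mV_0$.

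The second step uses the mean-zero condition. The quantity $\sum_{\mw}\nu(\mw)f(\mw)/\nu(\mV_0)$ is a $\nu$-average of the values of $f$ on $\mV_0$; it is well defined because $\nu(\mV_0)\le\nu(\mV)<\infty$ under the standing assumption, and $f\in\ell^2_\nu\subset\ell^1_\nu$. Since this average equals $0$, for every $\mv$ one has
\[
|f(\mv)|=\Big|\frac{1}{\nu(\mV_0)}\sum_{\mw}\nu(\mw)\big(f(\mv)-f(\mw)\big)\Big|\le\sup_{\mw}|f(\mv)-f(\mw)|.
\]
Combining this with the first step gives the assumption with $C=L_{\max}(\mG)^{1/2}$, which is independent of $\mV_0$.

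The only delicate point, and the main obstacle I expect, is the reduction to non-contractible paths. \autoref{rem:contractible} must be applied inside $\mG_0$ rather than in $\mG$: a contraction performed in $\mG$ could use an edge that is still in $\mE_0$, so that part is fine, but the definition of $L_{\max}$ must then be checked to govern the contracted path. Inducedness resolves this, as argued above. One should also note that when $\mV_0$ is infinite, every pair of vertices is still joined by a finite path, so the argument goes through unchanged.
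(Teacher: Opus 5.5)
Your proposal is correct and follows essentially the same route as the paper: a telescoping Cauchy--Schwarz estimate along a non-contractible path in $\mG_0$ gives $|f(\mv)-f(\mw)|\le L_{\max}(\mG)^{1/2}\BouForm{\mV_0}(f)^{1/2}$, and averaging against $\nu$ with the mean-zero condition then yields $C=L_{\max}(\mG)^{1/2}$. You also check two points the paper leaves implicit: that non-contractibility in the induced subgraph $\mG_0$ coincides with non-contractibility in $\mG$, and that the mean is well defined because of the standing assumption $\nu(\mV)<\infty$.
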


\begin{proof}

Let \(\mV_0 \subset \mV\), and let $\mv,\mw\in \mV_0$. If the subgraph $\mG_0$ induced by $\mV_0$ is connected, we can mimic the telescopic sum argument  in the first part of the proof of \cite[Lemma~3.4]{GeoHaeKel15}  (see also the proof of~\cite[Theorem~2.2]{Car00}) and pick a non-contractible path \(\gamma=(\mv_0,\mv_1,\ldots,\mv_n)\) in \(\mV_0\) that connects \(\mv\) and \(\mw\) to find
	\begin{equation*}
	\begin{split}
|f(\mv)|\le |f(\mv)-f(\mw)|+|f(\mw)| & \le L(\Gamma)^\frac12 \left(\sum_{j=1}^n  \omega(\mv_j,\mv_{j-1}) |f(\mv_j)-f(\mv_{j-1})|^2 \right )^\frac12 +|f(\mw)| \\
		& \leq L_{\max{}}(\mG)^\frac12 \BouForm{\mV_0}(f)^\frac12+
		\inf_{\mv\in\mV_0} \frac{1}{\nu(\mv)}\|f\|^2_{\ell^2_{\nu_0}(\mV_0)}.
	\end{split}
	\end{equation*}
    Since $\mw\in \mV_0$ can be chosen arbitrarily, we can deduce
    \begin{equation*}
        \left |f(\mv) - \frac{1}{\nu(\mV_0)} \sum_{\mw \in \mV_0} \nu(\mw) f(\mw)\right | \le \frac{1}{\nu(\mV_0)} \sum_{\mw\in \mV_0} \nu(\mw) |f(\mv) - f(\mw) | \le L_{\max}(\mG)^\frac12   \BouForm{\mV_0}(f)^\frac12  .
    \end{equation*}
    Since $\mv$ was arbitrary, this completes the proof.
    
The case of non-connected $\mG_0$ follows likewise, applying the above reasoning to each connected component.
    \end{proof}

\begin{cor}\label{cor:geometric-condition-for-ucc}
	A connected graph \(\mG\) satisfies \autoref{assum:neum-exist} and \autoref{assum:unif-can-com} if one of the following holds:
	\begin{enumerate}
	\item\label{item:fin-leng} \(\mG\) has finite total length, i.e.,
	\begin{equation}\label{eq:lengthdefinition}
		L(\mG):=\sum_{\me\in \mE} \frac{1}{\omega(\me)} <\infty,
	\end{equation}
	\item\label{item:fin-cycl} \(\mG\) has finite diameter \(\diam(\mG)<\infty\) and $\mG$ has only finitely many independent cycles.
	\end{enumerate}
\end{cor}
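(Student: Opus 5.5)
The plan is to reduce both cases to \autoref{prop:geometric-condition-for-ucc}, i.e.\ to show that each hypothesis forces $L_{\max}(\mG)<\infty$. \autoref{prop:geometric-condition-for-ucc} then gives \autoref{assum:unif-can-com}, and \autoref{prop:hierarchy} gives \autoref{assum:neum-exist}, since \autoref{assum:unif-can-com} implies it.

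For case \eqref{item:fin-leng} this is immediate. A non-contractible path is in particular a simple path, so it traverses each edge at most once. Its length $\sum_{i}\omega(\mv_i,\mv_{i-1})^{-1}$ is therefore bounded by the full sum $\sum_{\me\in\mE}\omega(\me)^{-1}=L(\mG)$. Taking suprema gives $L_{\max}(\mG)\le L(\mG)<\infty$.

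For case \eqref{item:fin-cycl} the approach is to pick a spanning tree $\mathsf T$ of $\mG$. Finitely many independent cycles means that only finitely many edges, say $\me_1,\dots,\me_m$, lie outside $\mathsf T$. Removing them leaves a tree, and a simple path can use each $\me_j$ at most once. It can therefore be split into at most $m+1$ subpaths, each running inside $\mathsf T$, joined by the edges $\me_j$. Each tree subpath is the unique tree geodesic between its endpoints, so the task is to bound tree distances by $\diam(\mG)$. Here is the difficulty: the diameter of $\mG$ is measured with shortest paths that may use the edges $\me_j$, and tree distances can be larger. To handle this, I would argue that any tree path between $\mx,\my$ can be compared with a $\mG$-geodesic between them. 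The geodesic uses at most $m$ non-tree edges, and each non-tree edge $\me_j=\{\ma_j,\mb_j\}$ can be replaced by the tree path between $\ma_j$ and $\mb_j$, whose length $c_j<\infty$ is a fixed constant because the graph is connected and the number of such edges is finite. This yields $d_{\mathsf T}(\mx,\my)\le \diam(\mG)+\sum_j c_j$, where $d_{\mathsf T}$ denotes distance in the tree. Combining, every simple path, and in particular every non-contractible one, has length at most
\[
(m+1)\Bigl(\diam(\mG)+\sum_{j=1}^m c_j\Bigr)+\sum_{j=1}^m\omega(\me_j)^{-1},
\]
so $L_{\max}(\mG)<\infty$.

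The main obstacle I expect is making the phrase ``finitely many independent cycles'' rigorous for infinite graphs, namely identifying it with finiteness of the set of edges outside a spanning tree, i.e.\ finite cycle rank. The replacement argument also needs care: after substituting the detours, the resulting walk need not be simple, so one should note that its length bounds the tree distance anyway, since the tree geodesic is shorter than any walk. If that identification is awkward, a fallback is to use only the weaker consequence actually needed, a uniform bound on the number of non-tree edges that any simple path can use.
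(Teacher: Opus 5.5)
Your proof is correct. Case (1) and the final reduction to \autoref{prop:geometric-condition-for-ucc} are as in the paper, but for case (2) you take a different route.

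The paper fixes a finite connected subgraph $\mK$ containing all cycles, so that $\mG\setminus\mK$ is a forest. It splits a non-contractible path into at most three pieces $\gamma_1,\gamma_2,\gamma_3$, with $\gamma_2$ in $\mK$ and $\gamma_1,\gamma_3$ in tree components. It then gets $L_{\max}(\mG)\le\diam(\mG)+L(\mK)$ from $L(\gamma_1)+L(\gamma_3)\le\diam(\mG)$ and $L(\gamma_2)\le L(\mK)$.

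You instead fix a spanning tree with $m$ non-tree edges and cut a simple path into at most $m+1$ tree geodesics. You then bound tree distances by $\diam(\mG)+\sum_j c_j$ through the detour substitution.

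The paper's version gives the sharper constant of \autoref{rem:explicit-const}, but it leaves several structural facts implicit:
finite cycle rank forces all cycles into a finite subgraph; each tree component of $\mG\setminus\mK$ is attached by a single bridge, so a simple path meets at most two of them and only at its ends; and $\gamma_1,\gamma_3$ lie on every path between the endpoints of $\gamma$.

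Your argument needs only two elementary facts. First, the fundamental cycles of the non-tree edges are independent in the finitary cycle space, since each contains exactly one non-tree edge; hence $m$ is at most the number of independent cycles. Second, every walk in a tree contains the unique path between its endpoints. The cost is a cruder constant.

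One small repair: in an infinite weighted graph the infimum in \eqref{eq:distfunction} need not be attained. So replace the $\mG$-geodesic by a simple path of length at most $d(\mathsf x,\mathsf y)+\varepsilon$ and let $\varepsilon\to 0$; the estimate is unchanged.
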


(A cycle is a non-contractible finite path $\gamma=(\mv_0,\ldots,\mv_n)$ with $\mv_0=\mv_n$. A set of independent cycles of $\mG$ is a basis of the \textit{finitary cycle space} in \cite[Chapter~8.7]{Die17}; roughly speaking, a cycle $\tilde{\mC}\subset \mG$ is dependent on $\mC_1,\mC_2$ if $\mC$ can be obtained by gluing $\mC_1,\mC_2$ together along their common edges and then erasing those common edges.)

\begin{proof} 
In case (1) we have $L_{\text{max}}(\mG) \le L(\mG) < \infty$ and we deduce that \autoref{assum:neum-exist} and \autoref{assum:unif-can-com} are satisfied by \autoref{prop:geometric-condition-for-ucc}.
	
Let us now consider case (2). Denote by $\mK$ a connected subgraph of $\mG$ that contains all of its independent cycles; by assumption $\mK$ is finite. We decompose \(\gamma\) into three successive paths \(\gamma_1,\gamma_2,\gamma_3\), so that \(\gamma_2\) is contained in \(\mathsf K\) and \(\gamma_1, \gamma_3\) are contained in one of the connected components of the forest $\mG\setminus \mK$. (Note that \(\gamma_1,\gamma_2,\gamma_3\) might be trivial paths consisting of only one vertex, if \(\gamma\) does not meet \(\mathsf K\) or components of the forest.) Since \(L(\gamma_1)+L(\gamma_3)\leq\diam(\mG)\) and \(L(\gamma_2)\leq L(\mathsf K)\), we infer $L_{\text{max}}\le \diam(\mG) + L(\mathsf K) < \infty$ and \autoref{assum:neum-exist} and \autoref{assum:unif-can-com} is satisfied by \autoref{prop:geometric-condition-for-ucc}.  
\end{proof}
\begin{rem}\label{rem:explicit-const}
	The proof of \autoref{prop:geometric-condition-for-ucc} shows that we may choose \(C=L(\mG)^\frac12  \) in \eqref{eq:def-uniform-can-comp}, if \(\mG\) has finite total length $L$ (as defined in \eqref{eq:lengthdefinition}), and \(C=\left(\diam(\mG)+L(\mathsf K)\right)^\frac12  \), if \(\mG\) has finite diameter and is a forest after removal of a finite vertex set \(\mathsf K\).
\end{rem}

\section{Spectral minimal partitions}\label{sec:smps}

In this section, we are going to define the notion of partition of a graph along with three different notions of energy associated with partitions. The main result of this article will be to prove that each such energy can be minimized over a reasonable class of graph partitions, under appropriate (and arguably fairly natural) conditions on the metric measure structure of the given graph. {The underlying graph $\mG = (\mV,\mE)$ will be fixed throughout; given a set $\mV_0 \subsetneq \mV$ of vertices, the induced subgraph $\mG_0 = (\mV_0,\mE_0)$ will include exactly those edges of $\mG$ which start and end at vertices in $\mV_0$: $\me =\{\mv,\mw\} \in \mE_0$ if and only if $\me \in \mE$ and $\mv,\mw \in \mV_0$.}

\begin{defi}[Partition]
\label{def:partition}
Let \(k\in\N\). A \emph{\(k\)-partition} \(\parti=\left(\mV_j\right)_{1\leq j\leq k}\) of \(\mG\) is a family of nonempty, pairwise disjoint subsets \(\mV_1,\ldots,\mV_k\subset \mV\) with
\begin{equation}\label{eq:VdisjuniVi}
\mV=\mV_1\cup\ldots\cup\mV_k,
\end{equation}
{such that each of the associated induced subgraphs $\mG_1,\ldots,\mG_k$ is connected.} We will refer to both the elements $\mV_j$ and their corresponding induced subgraphs as the \emph{clusters} of \(\mathcal P\). 
\end{defi}

\begin{rem}
We stress that, according to \autoref{def:partition}, any partition is necessarily \emph{exhaustive}, i.e., each vertex in $\mV$ must belong to $\mV_i$ for at least one (in fact, precisely one) $i$; beware of the difference with the more general setting allowed e.g.\ in~\cite{KenKurLen21} when dealing with \emph{metric} graphs.
\end{rem}

In particular, we will not generally distinguish between a set of vertices $\mV_j$ and the associated induced subgraph $\mG_j$ of $\mG$, since for every $\mV_j$ there is a unique, canonical associated $\mG_j$. Let \(\classpart(\mG)\) denote the set of \(k\)-partitions of \(\mG\), and let \(\classconn(\mG)\) denote the set of \(k\)-partitions whose clusters are connected. We also write $\mathfrak{P}(\mG) := \bigcup_{k \geq 1} \classpart (\mG)$ and $\mathfrak{C} (\mG) := \bigcup_{k\geq 1} \classconn (\mG)$.

\begin{defi}[Energy of partitions]
For a \(k\)-partition \(\parti=\left(\mV_j\right)_{1\leq j\leq k}\) we define the \emph{Dirichlet},  \emph{Neumann}, and \emph{boundaryless energy} of \(\parti\) by
\begin{align}
 \denergy[k,p](\parti) :&= \displaystyle\left( \frac{1}{k}\sum_{j=1}^k \lambda_1^{\mathrm{D}}(\mV_j)^p\right)^\frac{1}{p},\quad 1\le p<\infty,\qquad \denergy[k,\infty](\parti):= \max_{j=1,\ldots,k}\lambda_1^{\mathrm{D}}(\mV_j),\\
 \nenergy[k,p](\parti):&= \left( \frac{1}{k}\sum_{j=1}^k  \llN(\mV_j)^p\right)^\frac{1}{p},\quad 1\leq p<\infty,\qquad  \nenergy[k,\infty](\parti):= \max_{j=1,\ldots,k}\llN(\mV_j),\\
 \benergy[k,p](\parti):&= \left( \frac{1}{k}\sum_{j=1}^k  \llB(\mV_j)^p\right)^\frac{1}{p},\quad 1\leq p<\infty,\qquad  \benergy[k,\infty](\parti):= \max_{j=1,\ldots,k}\llB(\mV_j),
 \end{align}
respectively.

\end{defi}
We are in the following going to study these functionals and, in particular, whether minimizers  exist over the class of partitions whose clusters are connected: i.e.\ we are interested in the quantities
\begin{align*}
	\doptenergy[k,p](\mG) :&= \inf_{\mathcal P\in\classconn(\mG)}\denergy[k,p](\parti), & 	\noptenergy[k,p](\mG) :&= \inf_{\mathcal P\in\classconn(\mG)}\nenergy[k,p](\parti), & \boptenergy[k,p](\mG) :&= \inf_{\mathcal P\in\classconn(\mG)}\benergy[k,p](\parti),
\end{align*}
and whether there are respective partitions \(\mathcal P\in\classconn(\mG)\) with \(\doptenergy[k,p](\mG) = \denergy[k,p](\parti)\), \(\noptenergy[k,p](\mG) =\nenergy[k,p](\parti)\) or \(\boptenergy[k,p](\mG) =\benergy[k,p](\parti)\). Such partitions are called \emph{spectral minimal partitions} and the corresponding values \emph{spectral minimal energies}.

\subsection{Discrete convergence in \(\classpart\)} To study convergence of sequences of partitions, which means convergence of the corresponding subgraphs, we will work with the following notion of subgraph convergence, which may be understood as a kind of pointwise convergence.

\begin{defi}\label{def:set-convergence}
	A sequence \((\mV^n)_{n\in\N}\) of subsets of \(\mV\) is said to be \emph{(discretely) convergent} if for every \(\mv\in\mV\) there exists \(n_0(\mv)\in\N\) so that either
		\(\mv\in\mV^n\)
	for all \(n\geq n_0(\mv)\), or
		\(\mv\notin\mV^n\)
	for all \(n\geq n_0(\mv)\). In that case, we call
		\[\lim_{n\rightarrow\infty}\mV^n:=\{\mv\in\mV~|~\text{there exists $n_0 \in \N$ such that}~\mv\in\mV^n\text{ for all }n\geq n_0(\mv)\}\]
	the \emph{limit} of \((\mV^n)_{n\in\N}\).
\end{defi}

\begin{rem}
Observe that the notion of convergence in \autoref{def:set-convergence} is purely set-theoretic. Indeed, $(\mV^n)_{n\in \N}$ is convergent if and only if the corresponding sequence of characteristic functions $(\mathbf{1}_{\mV^n})_{n\in \N}$ is pointwise convergent.

{
The topology on the power set of $\mV$ associated with this notion of convergence is, however, metrizable. To see this, impose an arbitrary enumeration on $\mV$ (finite or infinite), $\mV=\{\mv_1, \mv_2, \ldots\}$, then we can define a metric via (for example)
\begin{equation}\label{eq:metrik-jm}
    d(\mW_1, \mW_2) = \sum_k 2^{-k} |\boldsymbol 1_{\mW_1}(\mv_k) - \boldsymbol 1_{\mW_2}(\mv_k)|, \qquad \mW_1,\,\mW_2 \subset \mV,
\end{equation}
then one easily verifies that $\mW_n\stackrel{d}\to \mW$ if and only if $\lim_{n\to \infty} \mW_n = \mW$.}

One could alternatively consider the notion of convergence induced by the Hausdorff metric with respect to the discrete metric on the set $\mV$ (cf.~\cite[Section~7.3]{BurBurIva01}), which we will not do, as \autoref{def:set-convergence} is well suited to our purpose. It can be shown that convergence in \autoref{def:set-convergence} is weaker than convergence in the Hausdorff metric (indeed, strictly weaker if $\mV$ is infinite).

\end{rem}

\begin{lemma}\label{lem:ex-conv-subsequ}
	For every sequence \((\parti^n)_{n\in\N}\) of \(k\)-partitions \(\parti^n=(\mV_j^n)_{1\leq j\leq k}\in\classpart(\mG)\) there exists a subsequence \((\parti^{n_l})_{l\in\N}\), so that each \((\mV_j^{n_l})_{l\in\N}\) is convergent.

\end{lemma}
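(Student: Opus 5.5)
The plan is a Cantor diagonal argument, using that $\mV$ is at most countable together with the metrizable description of set convergence from the preceding remark. Equivalently, this is compactness of $\{0,1\}^{\mV}$ in the product topology, metrized as in \eqref{eq:metrik-jm}. That space is compact by Tychonoff and metrizable, hence sequentially compact. I will give the elementary diagonal version so as not to rely on sequential compactness separately.

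Fix an enumeration $\mV=\{\mv_1,\mv_2,\ldots\}$ (finite or infinite). Encode each $k$-partition $\parti^n$ by the label map $c_n:\mV\to\{1,\ldots,k\}$, where $c_n(\mv)=j$ if and only if $\mv\in\mV_j^n$. This is well defined because, by \autoref{def:partition}, the clusters are pairwise disjoint and exhaustive. Convergence of every sequence $(\mV_j^{n_l})_{l}$ in the sense of \autoref{def:set-convergence} is then equivalent to the following: for each $\mv$, the sequence $c_{n_l}(\mv)$ is eventually constant. Indeed, $\mv\in\mV_j^{n_l}$ exactly when $c_{n_l}(\mv)=j$, so eventual constancy of the label gives eventual membership in one cluster and eventual non-membership in the others.

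Next I choose nested subsequences. Since $c_n(\mv_1)$ takes only finitely many values, some value is attained infinitely often; restrict to those indices to get a subsequence $N_1\subset\N$ on which $c_n(\mv_1)$ is constant. Inductively, from $N_{i-1}$ extract an infinite $N_i\subset N_{i-1}$ on which $c_n(\mv_i)$ is constant, again by pigeonhole over $k$ values. If $\mV$ is finite this stops after finitely many steps and we are done. Otherwise, take the diagonal sequence $n_l:=$ the $l$-th element of $N_l$, chosen strictly increasing, which is possible since each $N_l$ is infinite and the sets are nested. For each $i$, the indices $n_l$ with $l\ge i$ lie in $N_i$, so $c_{n_l}(\mv_i)$ is constant for $l\ge i$. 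This gives the required $n_0(\mv_i)=i$ for every cluster index $j$ simultaneously.

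There is no real obstacle; the only points needing care are that the finite label set allows pigeonholing at each vertex, and that the diagonal indices are strictly increasing. I note that the limits need not form a partition: a limit set may be empty or disconnected. The lemma does not claim this, so I will not address it here.
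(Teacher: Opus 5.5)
Your proof is correct and follows essentially the same route as the paper: pigeonhole on the cluster containing each enumerated vertex along nested subsequences, followed by a Cantor diagonal extraction. Your label maps $c_n$ are a tidy repackaging of the paper's indices $j^m$, and they also immediately give the paper's closing observation that the limit sets are pairwise disjoint and cover $\mV$.
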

\begin{proof}
	Since \(\mV\) is countable we may write	\(\mV=\{\mv^m~|~m\in\N\}\). Using induction one can show that for each \(m\in\N\) there exist a sequence \((n_l^m)_{l\in \N}\) in \(\N\) and an index \(j^m\in\{1,\ldots,k\}\) that satisfy the following properties for each \(m\in\N\):
	\begin{itemize}
		\item the sequence \((n_l^{m})_{l\in \N}\) is a subsequence of \((n_l^{m-1})_{l\in \N}\) if \(m\geq 2\),
		\item the vertex \(\mv^m\) is in \(\mV_{j^m}^{n_l^m}\) for each \(l\in\N\). 
	\end{itemize}
	(Note that the respective induction steps make use of the fact that each \(\parti^n\) only decomposes \(\mV\) into finitely many subsets.) These two properties already imply that \(\mv^m\) is in \(\mV_{j^m}^{n_l^M}\) for all \(M\geq m\) and \(l\in \N\). Now, let \((n_l)_{l\in\N}\) be the diagonal sequence given by \(n_l=n_l^l\) for \(l\in\N\). By construction, we have
	\begin{align*}		
		\mv^m &\in \mV_{j^m}^{n_l}, & \mv^m &\notin \mV_{j}^{n_l},\quad  j\neq j^m
	\end{align*}
	for all \(l,m\in\N\) with \(l\geq m\). Thus, each \((\mV_j^{n_l})_{l\in\N}\) is convergent to some $\mV_j^\infty$ for \(j=1,\ldots,k\). Finally, $\mV = \mV_1^\infty \cup \ldots \cup \mV_k^\infty$ since the same is true for each $n$, and the definition of convergence ensures that each $\mv \in \mV$ belongs to some $\mV_j^{n_\ell}$ for all $\ell$ large enough, and hence to some $\mV_j^\infty$.
\end{proof}

\begin{lemma}\label{lem:limit-is-partition}
	Let \((\parti^n)_{n\in\N}\) be a sequence of \(k\)-partitions \(\parti^n=(\mV_j^n)_{1\leq j\leq k}\), so that each sequence \((\mV_j^n)_{n\in\N}\) is convergent with limit \(\mW_j=\lim_{n\rightarrow \infty}\mV_j^n\). If all \(\mW_j\) are nonempty, then \(\parti=(\mW_j)_{1\leq j\leq k}\) is a \(k\)-partition.
\end{lemma}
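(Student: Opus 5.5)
The plan is to verify the defining properties of a $k$-partition one at a time for $\parti=(\mW_j)_{1\le j\le k}$: nonemptiness, pairwise disjointness, exhaustiveness \eqref{eq:VdisjuniVi}, and connectedness of the induced subgraphs. Nonemptiness is assumed. Disjointness and exhaustiveness should follow directly from \autoref{def:set-convergence}, in the way already used at the end of the proof of \autoref{lem:ex-conv-subsequ}. Connectedness does not follow from pointwise convergence, and I expect it to fail in general; see the last paragraph.

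\emph{Disjointness.} Suppose $\mv\in\mW_i\cap\mW_j$ with $i\neq j$. By definition of the limit there are $n_i,n_j$ such that $\mv\in\mV_i^n$ for $n\ge n_i$ and $\mv\in\mV_j^n$ for $n\ge n_j$. For $n\ge\max\{n_i,n_j\}$ this contradicts the disjointness of $\mV_i^n$ and $\mV_j^n$.

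\emph{Exhaustiveness.} Fix $\mv\in\mV$. Each of the finitely many sequences $(\mV_j^n)_{n}$ converges, so there is a single $n_0$ such that, for every $j$, either $\mv\in\mV_j^n$ for all $n\ge n_0$ or $\mv\notin\mV_j^n$ for all $n\ge n_0$. Since $\parti^{n_0}$ covers $\mV$, we have $\mv\in\mV_{j_0}^{n_0}$ for some $j_0$. Hence the first alternative holds for $j_0$, and $\mv\in\mW_{j_0}$. Together with disjointness this gives $\mV=\mW_1\sqcup\cdots\sqcup\mW_k$.

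\emph{Connectedness.} This is the main obstacle. For $\mv,\mw\in\mW_j$, eventually both vertices lie in $\mV_j^n$, so for each large $n$ they are joined by a path inside $\mV_j^n$. These paths may change with $n$ and escape to infinity, so no single path survives in the limit. I would first try a diagonal/König-type extraction of a limiting path, but this fails in general. As a counterexample, take vertices $\mathsf a,\mathsf b,\mathsf c$ and $\mathsf x_m$ for $m\in\N$, with edges $\mathsf a\mathsf x_m$, $\mathsf x_m\mathsf b$ and $\mathsf c\mathsf x_m$, and summable edge weights so that \eqref{eq:omega-finite} holds. Set $\mV_1^n=\{\mathsf a,\mathsf x_n,\mathsf b\}$ and $\mV_2^n=\mV\setminus\mV_1^n$. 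Both clusters are connected for every $n$. The limits are $\mW_1=\{\mathsf a,\mathsf b\}$, which is disconnected, and $\mW_2=\{\mathsf c\}\cup\{\mathsf x_m\}_{m\in\N}$. So the lemma can hold only if $k$-partitions are understood in the sense of $\classpart(\mG)$, that is, as exhaustive families of disjoint nonempty sets, with connectedness of the clusters (membership in $\classconn(\mG)$) to be recovered separately. My plan is therefore to prove the statement in that sense and to record the counterexample to show that connectedness cannot be obtained from this lemma alone. I expect connectedness to be recovered later from the energy bounds: a disconnected cluster has $\llN=\llB=0$, which is incompatible with minimality.
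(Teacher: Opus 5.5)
Your disjointness and exhaustiveness arguments are precisely the paper's proof. The paper fixes, for each $\mv$, an index $n_0(\mv)$ and a unique $j(\mv)$ with $\mv\in\mV^n_{j(\mv)}$ for all $n\ge n_0(\mv)$. You also read the conclusion correctly as membership in $\classpart(\mG)$ only. Your counterexample is valid, and it is exactly why the paper later needs \autoref{lem:classconn-closed} (finitely many noncontractible paths) to obtain connectedness.

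One correction to your closing aside: energy bounds cannot restore connectedness. A disconnected limit cluster has $\llB=0$, which only lowers the partition energy, so it contradicts nothing about minimality over $\classconn(\mG)$.
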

\begin{proof}
	Let \(\mv\in\mV\). Since each \(\mathcal P^n\) is a \(k\)-partition and each cluster sequence $(\mV^n_j)_{n\in \N}$ converges, $j\in \{1,\ldots,k\}$, we find \(n_0(\mv)\in\N\) and \(j(\mv)\in\{1,\ldots,k\}\), so that for all \(n\geq n_0(\mv)\)
	\begin{align*}		
		\mv \in \mV_{j(\mv)}^{n}\qquad\hbox{and}\qquad \mv &\notin \mV_{\ell}^{n}\quad  \hbox{for all }\ell\neq j(\mv).
	\end{align*}
By definition of the limits \(\mW_j\), we obtain \(\mv \in \mW_{j(\mv)}\) and \(\mv \notin \mW_{j}\) for \(j\neq j(\mv)\). Since \(\mv\) was arbitrary, \(\mV\) is the disjoint union of the nonempty sets \(\mW_j\) and, thus, \(\parti\) is a \(k\)-partition.
\end{proof}

\begin{lemma}\label{lem:classconn-closed} 
Suppose that for any two vertices \(\mv\neq\mw\) in \(\mV\) there is only a finite number of noncontractible paths in \(\mG\) connecting \(\mv\) and \(\mw\). Then \(\mathfrak C(\mG)\) is sequentially closed in \(\mathfrak P(\mG)\) with respect to discrete convergence defined in \autoref{def:set-convergence}, i.e., the discrete limit  \(\mW\in\mathfrak P(\mG)\) of a sequence \((\mW^n)_{n\in\N}\) in \(\mathfrak C(\mG)\) already satisfies \(\mW\in\mathfrak C(\mG)\).
\end{lemma}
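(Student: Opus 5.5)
The plan is to show directly that each cluster of the limit partition induces a connected subgraph. Let $(\mW^n)_{n\in\N}$ be a sequence of $k$-partitions $\mW^n=(\mW^n_j)_{1\le j\le k}\in\classconn(\mG)$. Suppose it converges discretely, in the sense of \autoref{def:set-convergence} applied to each cluster, to the $k$-partition $\mW=(\mW_j)_{1\le j\le k}$, which is a partition by \autoref{lem:limit-is-partition}. Fix $j$ and two distinct vertices $\mv,\mw\in\mW_j$. It suffices to produce a path from $\mv$ to $\mw$ all of whose vertices lie in $\mW_j$.

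By the definition of the limit, there is $n_1$ such that $\mv,\mw\in\mW^n_j$ for all $n\ge n_1$. Since $\mW^n_j$ induces a connected subgraph, for each such $n$ there is a path in $\mW^n_j$ from $\mv$ to $\mw$. By \autoref{rem:contractible} we may replace it by a noncontractible path, and this path still uses only vertices of the original one, hence of $\mW^n_j$. One point needs checking here. Noncontractibility in \autoref{rem:contractible} is taken relative to the induced subgraph of $\mG_j^n$. However, $\mW^n_j$ induces its subgraph from $\mG$, so adjacency in $\mG$ among vertices of $\mW^n_j$ coincides with adjacency in the induced subgraph. Consequently the path obtained is noncontractible in $\mG$ in the sense of the definition.

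The key step is a pigeonhole argument. By hypothesis the set $\Gamma(\mv,\mw)$ of noncontractible paths in $\mG$ joining $\mv$ and $\mw$ is finite. We thus have a map $n\mapsto\gamma_n\in\Gamma(\mv,\mw)$ for $n\ge n_1$, so some fixed path $\gamma=(\mv_0,\ldots,\mv_m)$ satisfies $\gamma=\gamma_n$ for infinitely many $n$. Each vertex $\mv_i$ of $\gamma$ lies in $\mW^n_j$ for infinitely many $n$. Discrete convergence says $\mv_i$ is eventually either always in or always out of $\mW^n_j$, so $\mv_i$ is eventually always in, and therefore $\mv_i\in\mW_j$. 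Thus $\gamma$ is a path in the subgraph induced by $\mW_j$, which gives connectedness and hence $\mW\in\classconn(\mG)$.

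The main obstacle, where the hypothesis is genuinely needed, is this pigeonhole step. Without finiteness, the connecting paths could escape to infinity along a sequence with no common vertices remaining in the limit, for instance two ends joined through ever-farther vertices. The only subtlety to verify is the reduction to noncontractible paths staying inside $\mW^n_j$, which is handled as above. The case $\mv=\mw$, or $|\mW_j|=1$, is trivial.
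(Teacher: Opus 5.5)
Your proof is correct and follows the same route as the paper: fix two vertices of a limit cluster, choose noncontractible connecting paths inside the approximating clusters via \autoref{rem:contractible}, use the finiteness hypothesis to extract a single path that recurs infinitely often, and conclude from discrete convergence that all its vertices lie in the limit cluster. Your additional checks, namely that noncontractibility inside the induced subgraph coincides with noncontractibility in $\mG$, and that membership for infinitely many $n$ forces eventual membership, spell out details the paper leaves implicit.
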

\begin{proof}
	Let \(\mv\neq\mw\) be two arbitrary vertices in \(\mW\). Note that there exists at least one noncontractible path by \autoref{rem:contractible}. Since \((\mW^n)_{n\in\N}\) converges discretely to \(\mW\), there exists some \(n_0\in\N\) with \(\mv,\mw\in\mW^n\) for all \(n\geq n_0\). For every \(n\geq n_0\) we choose a noncontractible path \(\gamma^n\) that connects \(\mv\) and \(\mw\) in the graph induced by \(\mW^n\), which exists by \autoref{rem:contractible}. Since there are only finitely many noncontractible paths connecting \(\mv\) and \(\mw\) in \(\mG\), we find a subsequence \((\mW^{n_k})_{k\in\N}\) of \((\mW^n)_{n\geq n_0}\), so that \(\gamma^{n_k}=\gamma\) for all \(k\in\N\) and some fixed path \(\gamma\) that connects \(\mv\) and \(\mw\) in \(\mG\), hence in \(\mW\). As \((\mW^{n_k})_{k\in\N}\) converges to \(\mW\) the path \(\gamma\) is contained in \(\mW\). Consequently \(\mv\) and \(\mw\) can be connected in \(\mW\). This proves the claim.
\end{proof}

\subsection{The Dirichlet case} We can now prove existence of spectral minimal partitions for the various eigenvalue problems.

\begin{lemma}\label{lem:connected-not-restr}
Let $\mG$ be any graph for which {the form domain in \eqref{eq:domdirform}} embeds compactly in $\ell^2_\nu(\mV)$. For every \(k\)-partition \(\parti\in \classpart(\mG)\) there exists a \(k\)-partition \(\tilde\parti\in \classconn(\mG)\) with \(\denergy[k,p](\tilde\parti)\leq\denergy[k,p](\parti)\).
\end{lemma}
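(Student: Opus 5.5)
The plan is to replace each cluster by one well-chosen connected component and then regrow the clusters into an exhaustive connected partition. The growing step can only lower each Dirichlet eigenvalue, by the domain monotonicity of \autoref{lem:dlapld-selfadj}. The compact embedding hypothesis ensures that $\dLapl$ has compact resolvent. Hence every $\dLaplD$ has pure point spectrum, $\lambda_1^{\mathrm{D}}(\mV_0)$ is attained for every $\mV_0\subsetneq\mV$, and part \eqref{item:monotonicity-dir} of \autoref{lem:dlapld-selfadj} applies. If $k=1$ the only partition is $\{\mV\}$ and there is nothing to prove, so assume $k\ge2$.

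\emph{Step 1: one component per cluster.} Fix $\parti=(\mV_j)_{1\le j\le k}\in\classpart(\mG)$, where the clusters may be disconnected. Let $(\mC_j^i)_i$ be the connected components of the subgraph induced by $\mV_j$; there may be countably many. Distinct components are not joined by any edge. Consequently \eqref{eq:dformd-as-schroed} splits: for $g$ supported in $\mV_j$ we have $\dForm(g)=\sum_i\dForm(g\mathbf 1_{\mC^i_j})$ and $\|g\|^2=\sum_i\|g\mathbf 1_{\mC^i_j}\|^2$. Take a minimiser $g$ for $\lambda_1^{\mathrm{D}}(\mV_j)$ in \eqref{eq:minmax-dir}. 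Each restriction $g\mathbf 1_{\mC_j^i}$ lies in $D(\dForm)$, since $D(\dForm)$ is a closed subspace and the splitting shows the restriction has finite energy (for $D(\dForm)=h^1$ this is immediate). Since the Rayleigh quotient of $g$ is a weighted average of the quotients of its restrictions, some component $\mC_j:=\mC_j^{i}$ with $g\mathbf 1_{\mC_j}\neq0$ satisfies $\lambda_1^{\mathrm{D}}(\mC_j)\le\lambda_1^{\mathrm{D}}(\mV_j)$. In fact equality holds, by monotonicity. This yields pairwise disjoint, nonempty, connected sets $\mC_1,\dots,\mC_k$.

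\emph{Step 2: regrowing a connected exhaustive partition.} Let $d_{\mathrm{hop}}$ be the combinatorial (edge-count) distance, which is finite because $\mG$ is connected. Assign each vertex $\mv\in\mV$ to the index $j(\mv)$ that lexicographically minimises $\bigl(d_{\mathrm{hop}}(\mv,\mC_j),\,j\bigr)$, and set $\tilde\mV_j:=\{\mv: j(\mv)=j\}$. Clearly $\mC_j\subset\tilde\mV_j$, so each $\tilde\mV_j$ is nonempty, and the $\tilde\mV_j$ cover $\mV$ disjointly.

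For connectedness, take $\mv\in\tilde\mV_j\setminus\mC_j$ and let $\mw$ be its neighbour on a shortest path to $\mC_j$, so that $d_j(\mw)=d_j(\mv)-1$. For any $i\neq j$ we have $d_i(\mw)\ge d_i(\mv)-1\ge d_j(\mv)-1=d_j(\mw)$. Suppose equality held with $i<j$. Then $d_i(\mv)=d_j(\mv)$ with $i<j$, contradicting $j(\mv)=j$. Hence $\mw\in\tilde\mV_j$. By induction on $d_j(\mv)$, every vertex of $\tilde\mV_j$ is joined to the connected set $\mC_j$ inside $\tilde\mV_j$. Thus $\tilde\parti:=(\tilde\mV_j)_j\in\classconn(\mG)$.

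\emph{Step 3: conclusion.} Each $\tilde\mV_j$ is a proper subset of $\mV$, since $k\ge2$ and the clusters are nonempty. Also $\mC_j\subset\tilde\mV_j$. Part \eqref{item:monotonicity-dir} of \autoref{lem:dlapld-selfadj} therefore gives
\[
\lambda_1^{\mathrm{D}}(\tilde\mV_j)\le\lambda_1^{\mathrm{D}}(\mC_j)\le\lambda_1^{\mathrm{D}}(\mV_j)\qquad\text{for all }j.
\]
The map $(x_j)\mapsto\bigl(\frac1k\sum_j x_j^p\bigr)^{1/p}$ is monotone in each entry, and so is the maximum. Hence $\denergy[k,p](\tilde\parti)\le\denergy[k,p](\parti)$ for all $p\in[1,\infty]$.

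The main obstacle is Step 1 when $\mV_j$ has infinitely many components: the infimum over components need not a priori be attained. The argument avoids this by using the existence of a minimiser (from compactness) and decomposing that minimiser across the components, rather than taking an infimum over them.
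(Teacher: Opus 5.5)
Your proof is correct, and its second half takes a genuinely different route from the paper's. The paper argues iteratively. It takes the last disconnected cluster $\mV_l$ and keeps a single component $\mW_{i_0}$ with $\lambda_1^{\mathrm{D}}(\mW_{i_0})=\lambda_1^{\mathrm{D}}(\mV_l)$. It then glues every other component of $\mV_l$ wholesale onto some other cluster adjacent to it; such a cluster exists because $\mG$ is connected. Afterwards all clusters with index at least $l$ are connected, and the step is repeated at most $l-1$ more times.

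You instead extract the optimal component $\mC_j$ of every cluster at once and rebuild all cells in one stroke as a hop-distance Voronoi partition with lexicographic tie-breaking. Your shortest-path-neighbour argument is sound: equality $d_i(\mw)=d_j(\mw)$ with $i<j$ would force $d_i(\mv)=d_j(\mv)$, contradicting $j(\mv)=j$.

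Your construction avoids the induction on the number of disconnected clusters. It also avoids the bookkeeping with components $\mU_i$ of clusters that are themselves still disconnected. In addition, your Step~1 actually proves that the minimum over possibly countably many components is attained, which the paper merely asserts from discreteness of the spectrum. The paper's surgery is, in turn, more conservative: every cluster other than $\mV_l$ is only enlarged by whole components. So the new partition stays close to the old one, and no distance function is needed.

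One justification in Step~1 needs repair. Closedness of $D(\dForm)$ together with finite energy only yields $g\mathbf{1}_{\mC^i_j}\in h^1(\mV)$, not $g\mathbf{1}_{\mC^i_j}\in D(\dForm)$. This is automatic for finite components and for $D(\dForm)=h^1(\mV)$, and can be checked for $h^1_0(\mV)$ by a truncation argument. It can fail, however, for intermediate domains that couple different ends. Take $\mathbb{Z}$ with summable $1/\omega$ and finite $\nu$. The closed space $\{f\in h^1(\mV): f(+\infty)=f(-\infty)\}$ contains $h^1_0(\mV)$ and the indicator of the union of two tails, but not the indicator of either single tail.

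The paper's identity $\lambda_1^{\mathrm{D}}(\mV_l)=\min_{i}\lambda_1^{\mathrm{D}}(\mW_i)$ rests on exactly the same splitting. So this is a tacit assumption shared with the paper rather than a weakness of your route. Just state it explicitly, or work with $D(\dForm)=h^1(\mV)$.
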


{Recall that, under this compactness assumption,} therefore, on all subgraphs the Laplacian with Dirichlet boundary conditions has pure point spectrum, too, by \autoref{lem:dlapld-selfadj}.\eqref{item:monotonicity-ppsp}.

\begin{proof}
Let \(\parti=(\mV_j)_{1\leq j\leq k}\in \classpart(\mG)\setminus  \classconn(\mG)\). After reordering the clusters we may assume that there is some \(l\in \{1,\ldots,k\}\) so that \(\mV_1,\ldots,\mV_l\) are disconnected and \(\mV_{l+1},\ldots,\mV_k\) are connected. \\

	We decompose \(\mV_l\) into its connected components with respect to the topology induced by the shortest path metric in~\eqref{eq:distfunction}, i.e.\
		\(\mV_l=\bigcup_{i\in I} \mW_i\)
	for a family of nonempty and pairwise disjoint subsets \(\mW_i\subset \mV_l\) where \(\mW_{i_1}\cup\mW_{i_2}\) is disconnected for any pair \(i_1\neq i_2\) in \(I\). Note that the index set \(I\) is countable because \(\mV\) is countable. Now, since the Laplacian on \(\mV_l\) with Dirichlet boundary conditions has discrete spectrum, we have
		\[\lambda_1^{\mathrm{D}}(\mV_l)=\min_{i\in I}\lambda_1^{\mathrm{D}}(\mW_i),\]
	so we may choose an index \(i_0\) so that \(\lambda_1^{\mathrm{D}}(\mV_l)=\lambda_1^{\mathrm{D}}(\mW_{i_0})\). As \(\mV\) is connected, for each \(i\in I\setminus\{i_0\}\) there exist a \(j_i\in\{1,\ldots,k\}\) with \(j_i\neq l\) and a connected component \(\mU\) of \(\mV_{j_i}\) so that \(\mU_i\cup\mW_{i}\) is connected. Note that \(\mV_{j_i}\) is already connected if \(j_i>l\) and, thus, \(\mU_i=\mV_{j_i}\) in that case. Finally, we define a new partition \(\parti'=(\mV'_j)_{1\leq j\leq k}\) of \(\mV\) by setting
		\[\mV'_{l}:=\mW_{i_0}\]
	and
		\[\mV'_j:=\mV_j\cup\bigcup_{\substack{i\in I,j=j_i}}\mW_i, \quad j\neq l.\]
	By construction, \(\parti'\) is, in fact, a \(k\)-partition of \(\mG\), and \(\mV'_j\) is connected for \(j=l,\ldots,k\). Moreover, \(\mV_j\subset \mV'_j\) for \(j\neq l\), so the monotonicity of \(\lambda_1^{\mathrm{D}}\) (see \autoref{lem:dlapld-selfadj}.\eqref{item:monotonicity-dir}) yields \(\lambda_1^{\mathrm{D}}(\mV'_j)\leq\lambda_1^{\mathrm{D}}(\mV_j)\). Together with \(\lambda_1^{\mathrm{D}}(\mV_l)=\lambda_1^{\mathrm{D}}(\mW_{i_0})=\lambda_1^{\mathrm{D}}(\mV_l)\) we obtain
		\[\denergy[k,p](\parti')\leq\denergy[k,p](\parti).\]
	Altogether, we have constructed a \(k\)-partition \(\parti'\) with \(\denergy[k,p](\parti')\leq\denergy[k,p](\parti)\) that has at most \(l-1\) disconnected clusters. So, repeating this construction at most \(l-1\) more times proves the claim.
\end{proof}

\begin{theo}\label{thm:dir-existence}
	Let $\mG$ be any graph for which $h^1(\mV)$ embeds compactly in $\ell^2_\nu(\mV)$. For \(p \in [1,\infty]\) and \(k\in\N\) there exists a \(k\)-partition \(\parti\in \classconn(\mG)\) with \[\doptenergy[k,p](\mG)=\denergy[k,p](\parti).\]
\end{theo}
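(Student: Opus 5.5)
The plan is the direct method of the calculus of variations. Take a minimizing sequence of connected \(k\)-partitions, extract a discretely convergent subsequence using \autoref{lem:ex-conv-subsequ}, and show that the limit is a \(k\)-partition whose energy is at most \(\doptenergy[k,p](\mG)\). The limit need not have connected clusters, since we do not assume the finiteness hypothesis of \autoref{lem:classconn-closed}. We will repair this with \autoref{lem:connected-not-restr}, which yields a partition in \(\classconn(\mG)\) with no larger energy, hence a minimizer.

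\emph{Setup.} The case \(k=1\) is trivial, so let \(k\ge2\). We first check that \(\classconn(\mG)\neq\emptyset\), so that the infimum is finite. For instance, grow a connected set \(\mV_1\) of \(k-1\) vertices inside a spanning tree and split it off suitably; we expect this to be standard. Then choose \(\parti^n=(\mV^n_j)_{j}\in\classconn(\mG)\) with \(\denergy[k,p](\parti^n)\to\doptenergy[k,p](\mG)\). In particular, \(\lambda_1^{\mathrm D}(\mV^n_j)\le M\) for all \(n,j\): for \(p<\infty\) use \(\lambda_1^{\mathrm D}(\mV^n_j)^p\le k\,\denergy[k,p](\parti^n)^p\). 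By \autoref{lem:ex-conv-subsequ} we may pass to a subsequence along which every \(\mV^n_j\) converges discretely to some \(\mW_j\). Passing to a further subsequence, we may also assume \(\lambda_1^{\mathrm D}(\mV^n_j)\to\lambda_j\) for each \(j\).

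\emph{Lower semicontinuity (the main step).} For each \(j\), let \(f_n\in D(\dForm)\) be a normalized eigenfunction realizing \(\lambda_1^{\mathrm D}(\mV^n_j)\) in \eqref{eq:minmax-dir}. Thus \(\|f_n\|_{\lSP(\mV)}=1\), \(\supp f_n\subset\mV^n_j\), and \(\dForm(f_n)\le M\).
\begin{itemize}
\item Since \((f_n)\) is bounded in \(D(\dForm)\) and this space embeds compactly in \(\lSP(\mV)\), a subsequence converges weakly in \(D(\dForm)\) and strongly in \(\lSP(\mV)\) to some \(f\in D(\dForm)\). Here we use that \(D(\dForm)\) is a closed subspace, hence weakly closed.
\item Strong convergence gives \(\|f\|_{\lSP(\mV)}=1\), and weak lower semicontinuity of the norm gives \(\dForm(f)\le\liminf\dForm(f_n)=\lambda_j\).
\item Convergence in \(\lSP(\mV)\) implies pointwise convergence. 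If \(\mv\notin\mW_j\), then \(\mv\notin\mV^n_j\) for all large \(n\), so \(f_n(\mv)=0\) and hence \(f(\mv)=0\). Thus \(\supp f\subset\mW_j\).
\item In particular \(\mW_j\ne\emptyset\), because \(f\neq0\).
\end{itemize}
By \autoref{lem:limit-is-partition}, \(\parti=(\mW_j)_j\) is then a \(k\)-partition. By \eqref{eq:minmax-dir}, \(\lambda_1^{\mathrm D}(\mW_j)\le\dForm(f)\le\lambda_j\), and this min-max characterization is valid by \autoref{lem:dlapld-selfadj}. Summing, or taking the maximum when \(p=\infty\), gives \(\denergy[k,p](\parti)\le\lim_n\denergy[k,p](\parti^n)=\doptenergy[k,p](\mG)\).

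\emph{Conclusion.} Apply \autoref{lem:connected-not-restr} to \(\parti\). Its compactness hypothesis holds because the Dirichlet form domain is a closed subspace of \(h^1(\mV)\), which embeds compactly by assumption. This yields \(\tilde\parti\in\classconn(\mG)\) with \(\denergy[k,p](\tilde\parti)\le\denergy[k,p](\parti)\le\doptenergy[k,p](\mG)\), and the reverse inequality holds by definition of the infimum.

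I expect the main obstacle to be the lower-semicontinuity step, specifically transferring the support condition to the limit. This is exactly where discrete convergence (rather than Hausdorff convergence) is essential. The compact embedding is what prevents mass from escaping to infinity, and therefore what prevents a limiting cluster from being empty.
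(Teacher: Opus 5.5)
Your proposal is correct and follows essentially the same route as the paper. Both arguments take a minimizing sequence in \(\classconn(\mG)\), pass to a discretely convergent subsequence via \autoref{lem:ex-conv-subsequ}, use the compact embedding to obtain a normalized limit function supported in \(\mW_j\) (so every limit cluster is nonempty), use lower semicontinuity to compare energies, and restore connectedness with \autoref{lem:connected-not-restr}; the paper invokes that lemma at the start rather than the end. The one real difference is that you get lower semicontinuity from weak compactness in the closed subspace \(D(\dForm)\) instead of Fatou's lemma, and this also guarantees \(f\in D(\dForm)\), whereas Fatou alone only yields \(f\in h^1(\mV)\), a point the paper's argument passes over when \(D(\dForm)\neq h^1(\mV)\).
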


\begin{proof}
By \autoref{lem:connected-not-restr}, it suffices to show existence of \(\parti\in \classpart(\mG)\) with
	\[\doptenergy[k,p](\mG)\geq\denergy[k,p](\parti).\]
Let \((\parti^n)_{n\in\N}\) be a sequence of \(k\)-partitions \(\parti^n=(\mV_j^n)_{1\leq j\leq k}\in\classconn(\mG)\) with
\[\denergy[k,p](\parti^n)\searrow\doptenergy[k,p](\mG), \quad n\rightarrow\infty.\]
For all \(j=1,\ldots,k\) and \(n\in\N\) there exists some \(f_j^n\in \lSP(\mV)\) with \(\supp f_j^n\subset \mV_j^n\), \(||f_j^n||_{\lSP(\mV)}=1\) and \(\dForm(f_j^n)=\lambda_1^{\mathrm{D}}(\mV_j^n)\), thus
	\[\denergy[k,p](\parti^n)=\begin{cases}
		\left(\displaystyle\frac{1}{k}\sum_{j=1}^{k}\dForm(f_j^n)^p\right)^\frac{1}{p}, & 1\leq p<\infty\\
		\displaystyle\max_{j=1,\ldots,k}\dForm(f_j^n), & p=\infty.
	\end{cases}\]
	Therefore, the convergence \(\denergy[k,p](\parti^n)\rightarrow\doptenergy[k,p](\mG)\) yields that \((f_j^n)_{n\in\N}\) is bounded in \((D(\dForm),||\cdot||_\dForm)\) for \(j=1,\ldots,k\). By \autoref{lem:ex-conv-subsequ} and compactness of the embedding $h^1(\mV) \hookrightarrow \ell^2_\nu(\mV)$ we may assume that
\begin{itemize}	
	 \item each \((\mV_j^n)_{n\in\N}\) converges to some \(\mW_j\) for \(j=1,\ldots,k\) with respect to the topology in~\autoref{def:set-convergence},
	 \item each \((f_j^n)_{n\in\N}\) converges to some \(f_j\in\lSP(\mV)\) with respect to the norm \(||\cdot||_{\lSP(\mV)}\).
\end{itemize}
The convergence \(f_j^n\rightarrow f_j\) with respect to \(||\cdot||_{\lSP(\mV)}\) implies pointwise convergence \(f_j^n\rightarrow f_j\), which in turn implies \(\supp f_j\subset\mW_j\). And, since
	\[||f_j||_{\lSP(\mV)}=\lim_{n\rightarrow \infty}||f_j^n||_{\lSP(\mV)}=1,\]
every \(\mW_j\) is nonempty, so \(\parti:=(\mW_j)_{1\leq j\leq k}\) is a \(k\)-partition by \autoref{lem:limit-is-partition}. Now, for all \(j=1,\ldots,k\) Fatou's Lemma yields
	\[\dForm(f_j)\leq\liminf_{n\rightarrow\infty}\dForm(f_j^n)<\infty;\]
and we conclude \(f_j\in D(\dForm)\) with \(\supp f_j\in \mW_j\). Thus,
	\[\lambda_1^{\mathrm{D}}(\mW_j)\leq\dForm(f_j)\leq\liminf_{n\rightarrow\infty}\dForm(f_j^n)=\liminf_{n\rightarrow\infty}\lambda_1^{\mathrm{D}}(\mV_j^n)\]
and
	\[\denergy[k,p](\parti)\leq\liminf_{n\rightarrow \infty}\denergy[k,p](\parti^n) = \doptenergy[k,p](\mG).\qedhere\]
\end{proof}

\subsection{The Neumann case}

The main aim of this section is to prove a counterpart of \autoref{thm:dir-existence} for Neumann partitions, namely the following.

\begin{prop}\label{thm:neu-existence}

    Suppose that \(\mG\) satisfies \autoref{assum:neum-exist} and let $p \in [1,\infty]$ and $k\in \mathbb N$. Assume there exists a minimizing sequence $\mathcal P^n=(\mV_j^n)_{1\le j\le k}\in \classconn(\mG)$ for $\noptenergy[k,p]$ such that
\begin{equation}
\mV_j^n \to \mW_j, \qquad n\to \infty,
\end{equation}
for some subsets \(\mW_j\subset\mV\) for \(j=1,\ldots,k\). Then the following assertions hold.
\begin{enumerate}
\item Each \(\mW_j\) is non-empty. In particular, \(\parti:=(\mW_j)_{1\leq j\leq n}\in\classpart\).
\item If additionally \(\parti\in\classconn\), then \(\parti\) is a spectral minimal partition for \(\noptenergy[k,p](\mG)\).
\end{enumerate}
\end{prop}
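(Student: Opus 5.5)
The plan is to mimic the proof of \autoref{thm:dir-existence}, with \autoref{assum:neum-exist} taking the place of the compact embedding. For each $n$ and $j$ I pick a minimizer $f_j^n$ of \eqref{eq:lambda2-rayl-chung-2} on $\mV_j^n$. It is normalized by $\|f_j^n\|_{\ell^2_\nu(\mV_j^n)}=1$, has $\nu$-mean zero on $\mV_j^n$, and satisfies $\NeuForm{\mV_j^n}(f_j^n)=\llN(\mV_j^n)$. I extend it to all of $\mV$ by zero. Since $(\parti^n)$ is minimizing, the energies $\llN(\mV_j^n)$ are bounded uniformly in $n$ and $j$. This holds for $p<\infty$ because $\llN(\mV_j^n)^p\le k\,\nenergy[k,p](\parti^n)^p$, and for $p=\infty$ directly. \autoref{assum:neum-exist} then gives a uniform bound $\|f_j^n\|_{\ell^\infty}\le M$. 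In particular no cluster $\mV_j^n$ can be a singleton for large $n$, since a singleton has infinite energy.

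For (1), I argue by contradiction. Suppose $\mW_j=\emptyset$. Because $\nu(\mV)<\infty$, there is a finite set $\mathsf F\subset\mV$ with $\nu(\mV\setminus\mathsf F)<1/(2M^2)$. Discrete convergence to the empty set means that $\mV_j^n\cap\mathsf F=\emptyset$ for all large $n$, since $\mathsf F$ is finite. Then
\[
1=\sum_{\mv\in\mV_j^n}\nu(\mv)|f_j^n(\mv)|^2\le M^2\,\nu(\mV\setminus\mathsf F)<\tfrac12,
\]
which is a contradiction. Hence every $\mW_j$ is nonempty, and \autoref{lem:limit-is-partition} shows that $\parti\in\classpart$.

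For (2), I pass to a diagonal subsequence along which $f_j^n\to f_j$ pointwise; this is possible because $|f_j^n|\le M$ and $\mV$ is countable. Dominated convergence applies with dominating function $M^2$ and the finite measure $\nu$. It gives $\|f_j\|_{\ell^2_\nu}=1$ and $\sum_{\mW_j}\nu f_j=0$, and the support of $f_j$ lies in $\mW_j$. The lower semicontinuity step is then
\[
\NeuForm{\mW_j}(f_j)\le\liminf_n\NeuForm{\mV_j^n}(f_j^n).
\]
I plan to prove it with Fatou's lemma, edge by edge. Fix an edge $\me\in\mE_0(\mW_j)\cup\partial\mW_j$. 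For large $n$ the finitely many relevant vertices, namely the endpoints of $\me$ and, at a boundary vertex $\mv$, its neighbours, have stabilized their membership, since the weights are locally finite; membership of the finitely many neighbours decides $\mv\in\delta\mV_0$. However, a boundary vertex can have infinitely many neighbours. In that case I need pointwise convergence of the Neumann extension
\[
\neuExt{\mV_j^n}f_j^n(\mv)\to\neuExt{\mW_j}f_j(\mv).
\]
This follows from dominated convergence in the weighted average, using the bound $|f_j^n|\le M$ and the summability \eqref{eq:omega-finite}. The denominator is controlled because some boundary edge of $\mW_j$ at $\mv$ has stabilized into $\partial\mV_j^n$. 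With these pointwise limits on each edge, Fatou's lemma yields the inequality.

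Given that, when $\mW_j$ is connected the function $f_j$ is admissible in \eqref{eq:lambda2-rayl-chung-2}. So $\llN(\mW_j)\le\liminf_n\llN(\mV_j^n)$, hence $\nenergy[k,p](\parti)\le\liminf_n\nenergy[k,p](\parti^n)=\noptenergy[k,p](\mG)$. The main obstacle is the Fatou step at boundary vertices of infinite degree, where the extension operator depends on infinitely many cluster memberships, and I would first try the dominated-convergence argument above.
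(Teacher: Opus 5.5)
Your proposal is correct and takes essentially the same route as the paper. Both arguments pick cluster eigenfunctions, get a uniform $\ell^\infty$ bound from \autoref{assum:neum-exist}, and use finiteness of $\nu$ to stop mass escaping; your tightness estimate is exactly the compact embedding $\ell^\infty(\mV)\hookrightarrow\ell^2_\nu(\mV)$ of \autoref{prop:comp-embedd} that the paper invokes. Both then apply Fatou's lemma edge by edge.

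Your technical choices are, if anything, cleaner than the paper's. Extending by zero rather than to $\mV_j^n\cup\delta\mV_j^n$ gives $\|f_j\|=1$ and $\operatorname{supp} f_j\subset\mW_j$ directly. Your dominated-convergence argument for the Neumann extension at boundary vertices of infinite degree justifies a point the paper's Fatou step leaves implicit.
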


\begin{proof}
To begin with, pick a sequence \( (\parti^n)_{n\in \N}\subset \classconn(\mG)\) of partitions \(\parti^n=(\mV^n_j)_{1\le j\le k}\) such that $\mV_j^n \to \mW_j$ for $j=1,\ldots, k$ and
\begin{equation}\label{eq:nen=nopt}
\lim_{n\to\infty}\nenergy[k,p](\parti^n)= \noptenergy[k,p](\mG).
\end{equation}

Because \autoref{assum:neum-exist} implies that $D(\NeuForm{\mV_j})$ is compactly embedded in $\ell^2_\nu(\mV_j)$, we deduce from the minimax characterisation of $\llN(\mV^n_j)$ that there exists $f^n_j\in D(\NeuForm{\mV^n_j})$ with $\|f^n_j\|_{\ell^2_\nu(\mV^{n}_j)}=1$, $\sum_{\mv\in\mV^n_j}f^n_j(\mv)\nu(\mv)=0$ and $\llN(\mV^n_j)=\NeuForm{\mV^n_j}(f^n_j)$.

Let $\overline{f^n_j}:=f^n_j\mathbf{1}_{\mV^n_j\cup \delta\mV^n_j}$: then with \autoref{rem:stronguniformcanN} we have
\[
\|\overline{f^n_j}\|_{\ell^\infty(\mV)}=\|{f^n_j}\|_{\ell^\infty(\mV^n_j)}\le C\NeuForm{\mV^n_j}(f^n_j)^\frac12=C\llN(\mV^n_j)^\frac{1}{2},
\]
where the sequence on the right-hand side is bounded because of~\eqref{eq:nen=nopt}. Also, because $\ell^\infty(\mV)$ is compactly embedded in $\ell^2_\nu(\mV)$, we may -- upon passing to a subsequence -- assume that  $\overline{f^n_j}$ converges in $\ell^2_\nu(\mV)$, say to $\overline{f_j}$, for all $j$. We conclude that on the one hand $\|\overline{f_j}\|_{\ell^2_\nu(\mV)}=\|\overline{f^n_j}\|_{\ell^2_\nu(\mV)}\equiv 1$, on the other hand $\supp(\overline{f_j})\subset \mW_j\cup \delta \mW_j$ (because $\overline{f_j}(\mv)=\lim_{n}\overline{f^n_j}(\mv)$), whence $\mW_j$ is guaranteed to be nonempty for all $j$.

Let us now introduce $f_j:=\overline{f_j}_{|\mW_j\cup{\delta \mW_j}}$: then
\[
\sum_{\mv\in\mV^n_j}f^n_j(\mv)\nu(\mv)=\sum_{\mv\in\mV^n_j}\overline{f^n_j}(\mv)\nu(\mv)=0,
\]
and passing to the limit in $n$ we find, as $\overline{f_j}=\lim_{n\to\infty} \overline{f^n_j}$ in $\ell^2_\nu(\mV)$, 
\[
\sum_{\mv\in\mW_j}f_j(\mv)\nu(\mv)=\sum_{\mv\in\mW_j}\overline{f_j}(\mv)\nu(\mv)=0.
\]
To conclude that $f_j$ is an admissible test function, it remains to observe that, by Fatou,
\[
\begin{split}
	\NeuForm{\mW_j}(f_j)&= \sum_{\me=\{\mv,\mw\}\in \mE} \mathbf 1_{(\mV_j\cup \delta \mV_j) \times (\mV_j\cup \delta \mV_j)}(\mv,\mw) \; \omega(\me) |\overline{f_j}(\mw)-\overline{f_j}(\mv)|^2\\ 
	&\leq  \liminf_{n\rightarrow \infty}  \sum_{\me=\{\mv,\mw\}\in \mE} \mathbf 1_{(\mV_j^n \cup \delta \mV_j^n) \times (\mV_j^n\cup \delta \mV_j^n)}(\mv,\mw) \; \omega(\me) |\overline{f_j^n}(\mw)-\overline{f_j^n}(\mv)|^2\\
	&= \liminf_{n\to \infty}\NeuForm{\mV^n_j}(f^n_j)<\infty
\end{split}
\]
i.e., $f_j\in D(\NeuForm{\mW_j})$ for all $j$.

It follows that
\[
\llN(\mW_j)\le \liminf_{n\to\infty}\llN(\mV^n_j)\qquad\hbox{for all }j,
\]
and finally
\[
\nenergy[k,p](\parti)\le \liminf_{n\to\infty}\nenergy[k,p](\parti^n)=\noptenergy[k,p](\mG).
\]
Because the converse inequality is trivial, the claims follows.
\end{proof}

We expect that if, additionally, $\parti\in\classconn(\mG)$, then there exists a spectral minimal partition attaining $\noptenergy[k,p](\mG)$. We can, at least, prove this in a relevant special case.
\begin{theo}\label{thm:existence-bdryless}
	Let \(k\in\N\) and \(p\in[1,\infty]\). Let $\mG$, in addition to \autoref{assum:neum-exist}, satisfy the requirements of \autoref{lem:classconn-closed}.
	
	Then there exists a $k$-partition $\mathcal P \in \classconn(\mG)$ with
    \[\noptenergy[k,p](\mG)=\nenergy[k,p](\parti).\]
\end{theo}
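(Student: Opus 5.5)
We combine \autoref{lem:ex-conv-subsequ}, \autoref{thm:neu-existence} and \autoref{lem:classconn-closed}. First we check that $\noptenergy[k,p](\mG)<\infty$, so that a minimizing sequence with bounded energies exists. If $\mV$ has fewer than $k$ vertices, $\classconn(\mG)$ is empty and the statement is vacuous, or the infimum is $+\infty$ by convention. Otherwise one can pick some connected $k$-partition in which every cluster with a single vertex is handled by the convention $\llN=\infty$. The delicate point is therefore whether the infimum is finite at all. If every admissible partition has infinite energy, then any partition is trivially a minimizer, and we will dispose of this case at the outset.

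In the main case, take a sequence $(\parti^n)_{n\in\N}\subset\classconn(\mG)$ with $\nenergy[k,p](\parti^n)\to\noptenergy[k,p](\mG)$. By \autoref{lem:ex-conv-subsequ}, after passing to a subsequence, each cluster sequence $(\mV^n_j)_{n\in\N}$ converges discretely to some $\mW_j\subset\mV$. This subsequence is still minimizing, so the hypotheses of \autoref{thm:neu-existence} hold. Part (1) of that proposition gives that every $\mW_j$ is nonempty, and then \autoref{lem:limit-is-partition} gives $\parti:=(\mW_j)_{1\le j\le k}\in\classpart(\mG)$.

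Next we show that $\parti\in\classconn(\mG)$. Fix $j$. The sets $\mV^n_j$ are connected and converge discretely to $\mW_j$. We apply \autoref{lem:classconn-closed}, or more precisely its proof, cluster by cluster. Take two vertices $\mv\neq\mw$ in $\mW_j$. For large $n$ both lie in $\mV^n_j$, so there is a noncontractible path joining them inside $\mV^n_j$, by \autoref{rem:contractible} applied within the induced subgraph. One should check that a path noncontractible in the induced subgraph $\mG^n_j$ is also noncontractible in $\mG$. This holds because $\mG^n_j$ is an induced subgraph, so adjacency between path vertices is the same in both graphs. There are only finitely many such paths in $\mG$, so one path $\gamma$ occurs for infinitely many $n$. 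Each vertex of $\gamma$ then lies in $\mV^n_j$ for infinitely many $n$, and by discrete convergence it lies in $\mW_j$. Hence $\mW_j$ is connected, and $\parti\in\classconn(\mG)$.

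Finally, part (2) of \autoref{thm:neu-existence} shows that $\parti$ attains $\noptenergy[k,p](\mG)$. The main obstacle is the bookkeeping in the connectedness step. \autoref{lem:classconn-closed} is phrased for elements of $\mathfrak C(\mG)$ as whole partitions, so it has to be reapplied to each cluster's induced subgraph. This relies on the observation above that noncontractibility relative to an induced subgraph agrees with noncontractibility in $\mG$. Everything else is a direct chaining of the earlier results.
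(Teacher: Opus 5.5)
Your proposal is correct and follows the paper's proof exactly: you extract a discretely convergent minimizing subsequence via \autoref{lem:ex-conv-subsequ}, get nonempty limit clusters from \autoref{thm:neu-existence}(1), get connectedness of the limit clusters from the argument of \autoref{lem:classconn-closed}, and get minimality from \autoref{thm:neu-existence}(2); your cluster-wise check that noncontractibility in an induced subgraph agrees with noncontractibility in $\mG$ only makes explicit what the paper leaves implicit. The one slip is in your preamble: if $|\mV|<k$ then $\classconn(\mG)=\emptyset$, so the existence claim is false rather than vacuous, and one must simply assume $k\le|\mV|$, as the paper tacitly does.
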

\begin{proof}
	Consider a sequence \((\parti^n)_{n\in\N}\) of \(k\)-partitions \(\parti^n=(\mV_j^n)_{1\leq j\leq k}\in\classconn(\mG)\) with
	\[\nenergy[k,p](\parti^n)\rightarrow\noptenergy[k,p](\mG),\quad \text{as }n\rightarrow\infty.\]
	By \autoref{lem:ex-conv-subsequ} we may assume that each \((\mV_j^n)_{n\in\N}\) is discretely convergent with some limit \(\mW_j\subset\mV\). As \((\parti^n)_{n\in\N}\) is a minimizing sequence, each \(\mW_j\) is non-empty by part \autoref{thm:neu-existence}.(1) and thus \(\parti:=(\mW_j)_{1\leq j\leq k}\in\classconn\) by \autoref{lem:classconn-closed}. Using \autoref{thm:neu-existence}.(2) we conclude \(\noptenergy[k,p](\mG)=\nenergy[k,p](\parti)\). This proves the claim.
\end{proof}

\subsection{The boundaryless case}
In this part we discuss the boundaryless case.

\begin{prop}\label{prop:minimizing-neumann-sequence}
Suppose that \(\mG\) satisfies \autoref{assum:unif-can-com} and let $p \in [1,\infty]$ and $k\in \mathbb N$. Assume there exists a minimizing sequence $\mathcal P^n=(\mV_j^n)_{1\le j\le k}\in \classconn(\mG)$ for $\boptenergy[k,p]$ such that
\begin{equation}
\mV_j^n \to \mW_j, \qquad n\to \infty,
\end{equation}
for some subsets \(\mW_j\subset\mV\) for \(j=1,\ldots,k\). Then the following assertions hold.
\begin{enumerate}
\item Each \(\mW_j\) is non-empty. In particular, \(\parti:=(\mW_j)_{1\leq j\leq n}\in\classpart\).
\item If additionally \(\parti\in\classconn\), then \(\parti\) is a spectral minimal partition for \(\boptenergy[k,p](\mG)\).
\end{enumerate}

\end{prop}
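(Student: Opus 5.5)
The argument will mirror the proof of \autoref{thm:neu-existence}, with the Neumann extension replaced by extension by zero and \autoref{assum:neum-exist} replaced by \autoref{assum:unif-can-com}.

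\emph{Eigenfunctions and uniform bounds.} Fix the minimizing sequence $(\parti^n)$ with $\mV_j^n\to\mW_j$, so that $\benergy[k,p](\parti^n)\to\boptenergy[k,p](\mG)$. Since each $\mV_j^n$ is connected, \autoref{assum:unif-can-com} together with \autoref{lem:finfingeohaekel}-type reasoning (cf.\ \autoref{rem:unif-can-com} and \autoref{cor:comp-embed-formdomain}) gives that $D(\BouForm{\mV_j^n})=h^1(\mV_j^n)$ embeds compactly into $\ell^2_{\nu}(\mV_j^n)$. Hence the minimum in \eqref{eq:llb} is attained, and we may pick $f_j^n$ with
\[
\|f_j^n\|_{\ell^2_\nu(\mV_j^n)}=1,\qquad \sum_{\mv\in\mV_j^n}\nu(\mv)f_j^n(\mv)=0,\qquad \BouForm{\mV_j^n}(f_j^n)=\llB(\mV_j^n).
\]
A singleton cluster has infinite energy, which a minimizing sequence excludes for large $n$ once we know the infimum is finite; finiteness follows by exhibiting a single connected $k$-partition with non-singleton clusters. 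Extend $f_j^n$ by zero to $\overline{f_j^n}$ on $\mV$. By \eqref{eq:def-uniform-can-comp} we have $\|\overline{f_j^n}\|_{\ell^\infty(\mV)}\le C\llB(\mV_j^n)^{1/2}$, and the right-hand side is bounded uniformly in $n$ because the energies converge.

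\emph{Passing to the limit.} By \autoref{prop:comp-embedd}, since $\nu$ is finite, we may pass to a subsequence along which $\overline{f_j^n}\to\overline{f_j}$ in $\ell^2_\nu(\mV)$, and hence pointwise, for every $j$. Then $\|\overline{f_j}\|=1$. Pointwise convergence together with the discrete convergence $\mV_j^n\to\mW_j$ gives $\supp\overline{f_j}\subset\mW_j$: for $\mv\notin\mW_j$ we have $\mv\notin\mV_j^n$ eventually, so $\overline{f_j^n}(\mv)=0$ eventually. Thus each $\mW_j\neq\emptyset$, and \autoref{lem:limit-is-partition} yields $\parti\in\classpart$, which proves (1). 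The mean-zero condition passes to the limit because $\ell^2_\nu$-convergence on a finite measure space implies $\ell^1_\nu$-convergence. Hence $f_j:=\overline{f_j}|_{\mW_j}$ satisfies $\sum_{\mW_j}\nu f_j=0$ and $\|f_j\|=1$.

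\emph{Lower semicontinuity.} For (2), with $\parti\in\classconn$, we write
\[
\BouForm{\mW_j}(f_j)=\sum_{\me=\{\mv,\mw\}\in\mE}\mathbf 1_{\mW_j}(\mv)\mathbf 1_{\mW_j}(\mw)\,\omega(\me)\,|\overline{f_j}(\mv)-\overline{f_j}(\mw)|^2 .
\]
Each summand is the pointwise limit of the corresponding summands with $\mV_j^n$ and $\overline{f_j^n}$, since the indicators stabilize. Fatou's lemma then gives
\[
\BouForm{\mW_j}(f_j)\le\liminf_{n\to\infty}\llB(\mV_j^n).
\]
Therefore $f_j$ is an admissible test function in \eqref{eq:llb} for the connected set $\mW_j$, so $\llB(\mW_j)\le\liminf_{n\to\infty}\llB(\mV_j^n)$ for each $j$. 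Taking the $\ell^p$-mean or the maximum gives $\benergy[k,p](\parti)\le\boptenergy[k,p](\mG)$, and the reverse inequality holds by definition.

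\emph{Main obstacle.} The delicate point is Step 1: using the $\ell^\infty$ bound, which is uniform over all connected subsets, to obtain compactness independently of the varying clusters. This is exactly what \autoref{assum:unif-can-com} supplies. One must also ensure that no cluster of the minimizing sequence is a singleton with infinite energy, which is handled by the finiteness of the infimum as noted above.
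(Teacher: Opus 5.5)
Your proposal is correct and follows the paper's proof essentially step by step: extension by zero, the uniform $\ell^\infty$ bound from \autoref{assum:unif-can-com}, compactness of $\ell^\infty(\mV)\hookrightarrow\ell^2_\nu(\mV)$, pointwise limits giving $\supp\overline{f_j}\subset\mW_j$ and the mean-zero condition, and Fatou for lower semicontinuity. On compactness you cite the correct result (\autoref{prop:comp-embedd}), whereas the paper cites \autoref{lem:ex-conv-subsequ}. Your remark on singleton clusters covers a point the paper passes over, but you only assert that a connected $k$-partition without singleton clusters exists, and this can fail (e.g.\ on a finite star, or for $k>|\mV|/2$). In that case the infimum is $+\infty$ and the statement must be checked directly; on finite graphs this is trivial, since discretely convergent sequences of subsets of a finite set are eventually constant.
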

\begin{proof}
Let $(\mathcal P^n)_{n\in \mathbb N}$ be a minimizing sequence of $k$-partitions $\parti^n= (\mV_j^n)_{1\leq j \leq k} \in \classconn(\mG)$ with
\begin{equation}
\Lambda_{k,p}^B(\mathcal P^n) \to \mathcal L_{k,p}^B(\mG), \quad n\to \infty.
\end{equation}
In particular for $j\in \{1, \ldots, k\}$ and $n\in \mathbb N$ there exist $f_j^n \in D(\BouForm{\mV_j^n})$ with $\|f_j^n\|_{l^2(V_j)}=1$ and \(\sum_{\mv\in\mW_j}\nu(\mv)f_j^n(\mv)=0\) such that
\[\benergy[k,p](\parti^n)=\begin{cases}
		\left(\displaystyle\frac{1}{k}\sum_{j=1}^{k}\BouForm{\mV_j^n}(f_j^n)^p\right)^\frac{1}{p}, & 1\leq p<\infty\\
		\displaystyle\max_{j=1,\ldots,k}\BouForm{\mV_j^n}(f_j^n), & p=\infty.
	\end{cases}\]

Then we can extend the functions $f_j^n\in D(\BouForm{\mV_j^n})$ by zero to functions $\tilde f_j^n \in \lSP(\mV)$.  Then with \autoref{assum:unif-can-com} we infer
\begin{equation}
\|\tilde f_j^n\|_{\ell^\infty(\mv)} = \|f_j^n\|_{\ell^\infty(\mV_j^n)} \le C \|f_j^n\|_{\BouForm{\mV_j^n}}.
\end{equation}  
By \autoref{lem:ex-conv-subsequ} we may assume
that each \((\widetilde f_j^n)_{n\in\N}\) converges to some \(\widetilde f_j\in\lSP(\mV)\) with respect to the norm \(||\cdot||_{\lSP(\mV)}\). This implies pointwise convergence \(\tilde f_j^n\rightarrow \tilde f_j\), which in turn implies \(\supp \tilde f_j\subset\mW_j\). Let \(f_j:=\tilde f_{j|\mW_j}\in\ell^2_\nu(\mW_j)\). Since
	\[||f_j||_{\lSP(\mW_j)}=||\tilde f_j||_{\lSP(\mV)}=\lim_{n\rightarrow \infty}||\tilde f_j^n||_{\lSP(\mV)}=1,\]
every \(\mW_j\) is nonempty, so \(\parti:=(\mW_j)_{1\leq j\leq k}\) is a \(k\)-partition by \autoref{lem:limit-is-partition}. Now, we additionally assume that \(\parti\in\classconn\).  Fatou's Lemma yields

\[
\begin{split}
	\BouForm{W_j}(f_j)&= \sum_{\me=\{\mv,\mw\}\in \mE} \mathbf 1_{W_j \times W_j}(\mv,\mw) \; \omega(\me) |\tilde f_j(\mw)-\tilde f_j(\mv)|^2\\
	&\leq  \liminf_{n\rightarrow \infty}  \sum_{\me=\{\mv,\mw\}\in \mE} \mathbf 1_{\mV_j^n \times \mV_j^n}(\mv,\mw) \; \omega(\me) |\widetilde f_j^n(\mw)-\widetilde f_j^n(\mv)|^2\\
	&= \liminf_{n\rightarrow\infty} \BouForm{\mV_j^n}(f_j^n)<\infty
	\end{split}
	\]
for \(j=1,\ldots,k\). We conclude \(f_j\in D(\BouForm{W_j})\) and, as \((\tilde f_j^n)_{n\in\N}\) converges to \(\tilde f_j\) in \(\ell^2_\nu(\mV)\), we have \(\sum_{\mv\in\mW_j}\nu(\mv)f_j(\mv)=0\) for \(1\leq j\leq k\).  Therefore, \(f_j\) is a viable test function in the Courant--Fischer principle for \(\llB(\mW_j)\) and we obtain
	\[\llB(\mW_j)\leq\BouForm{\mW_j}(f_j)\leq\liminf_{n\rightarrow\infty}\BouForm{\mV_j^n}(f_j^n)=\liminf_{n\rightarrow\infty}\llB(\mV_j^n)\]
and
	\[\benergy[k,p](\parti)\leq\liminf_{n\rightarrow \infty}\benergy[k,p](\parti^n) = \boptenergy[k,p](\mG).\]
This proves the claim.
\end{proof}

\begin{theo}\label{thm:existence-neumann}
Let \(k\in\N\) and \(p\in[1,\infty]\). Let $\mG$, in addition to \autoref{assum:unif-can-com}, satisfy the requirements of \autoref{lem:classconn-closed}.

Then there exists a $k$-partition $\mathcal P \in \classconn(\mG)$ with
    \[\boptenergy[k,p](\mG)=\benergy[k,p](\parti).\]
\end{theo}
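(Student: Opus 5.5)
The argument follows the Neumann case in \autoref{thm:existence-bdryless} verbatim, with \autoref{prop:minimizing-neumann-sequence} in place of \autoref{thm:neu-existence}.

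First, the infimum $\boptenergy[k,p](\mG)$ is finite. Assuming $|\mV|\ge k$, any $k$-partition into connected clusters can be used, for instance one obtained by successively splitting off leaves of a spanning tree. For such a partition each $\llB(\mV_j)$ is either finite or $+\infty$ (singletons), so we should make sure a partition with finite energy exists. If only singleton-type partitions were available the statement would be trivial with value $\infty$, and any element of $\classconn(\mG)$ would be a minimizer. Otherwise, choose a sequence $\parti^n=(\mV^n_j)_{1\le j\le k}\in\classconn(\mG)$ with $\benergy[k,p](\parti^n)\to\boptenergy[k,p](\mG)$.

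By \autoref{lem:ex-conv-subsequ}, after passing to a subsequence, each $(\mV^n_j)_{n\in\N}$ converges discretely to some $\mW_j\subset\mV$. The subsequence is still minimizing. \autoref{prop:minimizing-neumann-sequence}.(1) applies under \autoref{assum:unif-can-com}: the uniform $\ell^\infty$ bound on the normalized mean-zero eigenfunctions, extended by zero, gives a compact family in $\ell^2_\nu(\mV)$ by \autoref{prop:comp-embedd}. The limit functions have unit norm, so every $\mW_j$ is nonempty and $\parti:=(\mW_j)_{1\le j\le k}\in\classpart(\mG)$ by \autoref{lem:limit-is-partition}.

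Next, each cluster of $\parti$ is connected. This is where the hypothesis of \autoref{lem:classconn-closed} enters. Apply that lemma to each cluster sequence $(\mV^n_j)_{n\in\N}$, whose members are connected vertex sets with discrete limit $\mW_j$. Given $\mv,\mw\in\mW_j$, both lie in $\mV^n_j$ for large $n$, and there they are joined by a noncontractible path. Only finitely many such paths exist, so one path $\gamma$ recurs along a subsequence. Every vertex of $\gamma$ lies in $\mV^{n}_j$ infinitely often, and discrete convergence then forces it to lie in $\mW_j$. Hence $\mW_j$ is connected and $\parti\in\classconn(\mG)$.

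Finally, \autoref{prop:minimizing-neumann-sequence}.(2) gives $\benergy[k,p](\parti)\le\liminf_n\benergy[k,p](\parti^n)=\boptenergy[k,p](\mG)$, and the reverse inequality holds by definition of the infimum.

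The main obstacle is the connectedness step. Lower semicontinuity and nondegeneracy have already been packaged in \autoref{prop:minimizing-neumann-sequence}, so the only real point is to confirm that \autoref{lem:classconn-closed}, as stated for elements of $\mathfrak C(\mG)$, applies clusterwise. It does, because its proof only uses connectedness of the individual sets $\mW^n$. A secondary care point is the degenerate case where infinite energies occur, for example singleton clusters; there the argument should be run only when $\boptenergy[k,p](\mG)<\infty$.
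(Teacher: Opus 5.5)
Your proposal is correct and follows the paper's proof: a minimizing sequence, discrete convergence of the clusters via \autoref{lem:ex-conv-subsequ}, nonemptiness from \autoref{prop:minimizing-neumann-sequence}(1), connectedness of the limit clusters via (the proof of) \autoref{lem:classconn-closed}, and lower semicontinuity from \autoref{prop:minimizing-neumann-sequence}(2). Your clusterwise reading of \autoref{lem:classconn-closed}, your use of \autoref{prop:comp-embedd} for the compactness step, and your treatment of the degenerate case $\boptenergy[k,p](\mG)=\infty$ (e.g.\ forced singleton clusters) make explicit points the paper leaves implicit; one small correction is that an infinite spanning tree may have no leaves (e.g.\ $\mathbb{Z}$), so a connected $k$-partition is better obtained by deleting $k-1$ edges of a spanning tree.
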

\begin{proof}
	Consider a sequence \((\parti^n)_{n\in\N}\) of \(k\)-partitions \(\parti^n=(\mV_j^n)_{1\leq j\leq k}\in\classconn(\mG)\) with
	\[\benergy[k,p](\parti^n)\rightarrow\boptenergy[k,p](\mG),\quad \text{as }n\rightarrow\infty.\]
	By \autoref{lem:ex-conv-subsequ} we may assume that each \((\mV_j^n)_{n\in\N}\) is discretely convergent with some limit \(\mW_j\subset\mV\). As \((\parti^n)_{n\in\N}\) is a minimizing sequence, each \(\mW_j\) is non-empty by part \autoref{prop:minimizing-neumann-sequence}.(1) and thus \(\parti:=(\mW_j)_{1\leq j\leq k}\in\classconn\) by \autoref{lem:classconn-closed}. Using \autoref{prop:minimizing-neumann-sequence}.(2) we conclude \(\boptenergy[k,p](\mG)=\benergy[k,p](\parti)\). This proves the claim.
\end{proof}

\begin{rem}
    We summarize what we know for a few special cases, based on the \autoref{thm:existence-bdryless}, \autoref{thm:existence-neumann} and \autoref{cor:geometric-condition-for-ucc}: there exists a spectral minimal partition for \(\noptenergy[k,p](\mG)\) and \(\boptenergy[k,p](\mG)\) if \(\mG\) satisfies either of the following two conditions:
	\begin{enumerate}
	\item \(\mG\) has finite total length \(L(\mG)<\infty\) and for any two vertices \(\mv\neq\mw\) there is only a finite number of non-contractible paths in \(\mG\) connecting \(\mv\) and \(\mw\);
	\item \(\mG\) has finite diameter \(\diam(\mG)<\infty\) and a finite number of independent cycles.
	\end{enumerate}
    In the latter case there exists a connected spectral minimal partition.
\end{rem}

\section{Spectral inequalities}\label{sec:spec-ineq}

In this section we wish to investigate the relationships among the minimal energies $\doptenergy[k,p](\mG)$, $\noptenergy[k,p](\mG)$, $\boptenergy[k,p] (\mG)$, and the relationship between these and geometric quantities of the whole graph $\mG$, as well as the Laplacian eigenvalues of the latter.

\begin{lemma}\label{lem:continuity-p}
    Let $k \in \N$, let $\nu$ be finite and suppose $\mG$ has finite total length. The functions
    \[
    p \mapsto \boptenergy[k,p] (\mG),\quad p \mapsto \doptenergy[k,p](\mG), \quad p \mapsto \noptenergy[k,p](\mG)
    \]
    are continuous and non-decreasing functions of $p \in [1,\infty]$.
\end{lemma}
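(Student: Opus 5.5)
Monotonicity is pointwise in the partition, and the infimum of non-decreasing functions is non-decreasing. Continuity on $[1,\infty)$ and at $p=\infty$ needs uniform control coming from the finite total length assumption.

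\emph{Monotonicity.} Fix $\parti\in\classconn(\mG)$. The energies are normalised power means $M_p(a)=\bigl(\frac1k\sum_j a_j^p\bigr)^{1/p}$ of the nonnegative vector $a=(\lambda(\mV_j))_j$, where $\lambda$ is one of $\llD,\llN,\llB$. The case of entries equal to $+\infty$ (singleton clusters for Neumann or boundaryless) gives $M_p=\infty$ for all $p$ and is harmless. By the power mean inequality (Jensen for $t\mapsto t^{q/p}$), $p\mapsto M_p(a)$ is non-decreasing on $[1,\infty)$ with $M_p(a)\le \max_j a_j=M_\infty(a)$. Taking the infimum over $\parti$ preserves monotonicity, so each of $\boptenergy[k,p](\mG)$, $\doptenergy[k,p](\mG)$, $\noptenergy[k,p](\mG)$ is non-decreasing in $p$.

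\emph{Continuity.} I will use the elementary two-sided bound $k^{-1/p}M_\infty(a)\le M_p(a)\le M_\infty(a)$, and more generally $M_p(a)\le M_q(a)\le k^{1/p-1/q}M_p(a)$ for $p\le q$. This follows from $\sum_j a_j^q\le(\sum_j a_j^p)^{q/p}$. Taking infima over partitions gives, for $1\le p\le q\le\infty$,
\[
\doptenergy[k,p](\mG)\le \doptenergy[k,q](\mG)\le k^{\frac1p-\frac1q}\doptenergy[k,p](\mG),
\]
and likewise for the Neumann and boundaryless energies. Since $k^{1/p-1/q}\to1$ as $q\to p$, and also as $p,q\to\infty$, continuity follows on all of $[1,\infty]$, including at $p=\infty$. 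This requires the minimal energies to be \emph{finite}.

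\emph{Finiteness, the main obstacle.} We need at least one $\parti\in\classconn(\mG)$ with finite energy. In the Dirichlet case this is automatic: any connected $k$-partition has $\llD(\mV_j)<\infty$, since $\nu$ is finite and finite total length gives compact embedding by \autoref{cor:comp-embed-formdomain} and \autoref{lem:finfingeohaekel}. For Neumann and boundaryless energies we need a connected $k$-partition without singleton clusters, and this requires $|\mV|\ge 2k$. For $\mV$ infinite I would exhibit one explicitly. Take a spanning tree and cut off $k-1$ subtrees of at least two vertices, each connected and with connected remainder; each cluster then has $\llB,\llN<\infty$ by \autoref{assum:unif-can-com} and \autoref{assum:neum-exist}, which hold by \autoref{cor:geometric-condition-for-ucc}. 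If no such partition exists, then every energy is identically $+\infty$, and the statement holds trivially in the extended sense. I will record this convention explicitly.
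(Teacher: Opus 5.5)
Your proposal is correct and follows essentially the paper's proof. The paper also rests on the two-sided comparison $\doptenergy[k,\infty](\mG)\le k^{1/q}\doptenergy[k,q](\mG)\le k^{1/p}\doptenergy[k,p](\mG)\le k^{1/p}\doptenergy[k,q](\mG)\le k^{1/p}\doptenergy[k,\infty](\mG)$ for $1\le p\le q<\infty$, which is your $M_p\le M_q\le k^{1/p-1/q}M_p$ passed to the infimum over partitions, and it leaves continuity as an exercise. Your explicit treatment of finiteness of the minimal energies is a useful point the paper does not address, with one imprecision. Finiteness of $\llB$ and $\llN$ on a connected cluster with at least two vertices comes from finitely supported mean-zero test functions, whereas \autoref{assum:unif-can-com} and \autoref{assum:neum-exist} only guarantee that these minima are attained.
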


\begin{proof}
    We will sketch the proof for $\doptenergy[k,p](\mG)$, the other two cases being analogue. It is easy to see that for $1\le p\le q < \infty$ we have
    \[
        \doptenergy[k,\infty](\mG) \le k^\frac{1}{q}\doptenergy[k,q](\mG) \le k^\frac{1}{p}\doptenergy[k,p](\mG) \le k^\frac{1}{p} \doptenergy[k,q](\mG) \le k^\frac{1}{p} \doptenergy[k,\infty](\mG)
    \]
    and the statement follows as an easy exercise.
\end{proof}

\begin{prop}\label{prop:comparison-bn}
    Let $\nu$ be finite, and let $\mG$ have finite total length. Then for any $k \in \N$ and any $1 \leq p \leq \infty$,  we have
    \[
    \boptenergy[k,p]\le \noptenergy[k,p].
    \]
\end{prop}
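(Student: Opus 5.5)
The plan is to compare the two energies partition by partition and then take infima over $\classconn(\mG)$. Fix $k\in\N$ and $p\in[1,\infty]$. By \autoref{cor:geometric-condition-for-ucc}.\eqref{item:fin-leng}, finite total length gives both \autoref{assum:unif-can-com} and \autoref{assum:neum-exist}. Together with finiteness of $\nu$, this ensures that for every connected cluster $\mV_j$ the quantities $\llB(\mV_j)$ and $\llN(\mV_j)$ are well defined, i.e.\ the minima in \eqref{eq:llb} and \eqref{eq:lambda2-rayl-chung-2} are attained (or equal $+\infty$ for singletons). So both energies $\benergy[k,p]$ and $\nenergy[k,p]$ make sense on all of $\classconn(\mG)$.

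The key step is the cluster-wise inequality $\llB(\mV_0)\le\llN(\mV_0)$ for every $\mV_0\subsetneq\mV$ inducing a connected subgraph, which is already noted in \autoref{rem:dbn-inequalities}. I will redo it briefly. If $\mV_0$ is a singleton, both quantities equal $\infty$. Otherwise, take any $g\in D(\dFormN)$ with $\sum_{\mv\in\mV_0}\nu(\mv)g(\mv)=0$. Since $\neuExt{\mV_0}g=g$ on $\mV_0$ and $\mE_0\subset\overline{\mE_0}$, we get $\dFormB(g)\le\dFormN(g)$, dropping the nonnegative contributions of the edges in $\partial\mV_0$. In particular $g\in D(\dFormB)=h^1(\mV_0)$. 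Hence the admissible set for the Neumann Rayleigh quotient is contained in that for the boundaryless one, and the boundaryless quotient is pointwise smaller on it. Taking minima yields the inequality.

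Next, for any $\parti=(\mV_j)_{1\le j\le k}\in\classconn(\mG)$, the maps $(x_1,\dots,x_k)\mapsto(\frac1k\sum x_j^p)^{1/p}$ and $\max_j x_j$ are monotone in each coordinate on $[0,\infty]^k$. So $\benergy[k,p](\parti)\le\nenergy[k,p](\parti)$, including the case where some entries are $+\infty$. Taking the infimum over $\parti\in\classconn(\mG)$, first on the left and then on the right, gives $\boptenergy[k,p](\mG)\le\noptenergy[k,p](\mG)$.

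The only point requiring care, and thus the main obstacle, is domain inclusion and well-posedness in the infinite case. One must check that $D(\dFormN)\subset D(\dFormB)$, which follows from the edge-by-edge comparison above. One must also check that the minima defining both energies exist, which is where finite total length (through \autoref{prop:geometric-condition-for-ucc} and the resulting compact embeddings) enters. Note that attainment of the optimal partitions is not needed, since the argument only compares infima.
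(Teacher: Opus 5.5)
Your argument is correct and is essentially the paper's proof. The paper simply cites the cluster-wise inequality $\llB(\mV_0)\le\llN(\mV_0)$ from \autoref{rem:dbn-inequalities}, which you rederive from $D(\dFormN)\subset D(\dFormB)$ and $\dFormB\le\dFormN$ under the same mean-zero constraint, and it leaves implicit the passage to the infimum over $\classconn(\mG)$ via monotonicity of the $p$-means, which you spell out.
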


\begin{proof}
This is a direct consequence of the inequality $\llB(\mV_0)\le \llN(\mV_0)$ for any subset $\mV_0 \subset \mV$, see \autoref{rem:dbn-inequalities} (and compare \cite[Theorem~1.1]{ShiYu25}).
\end{proof}

\begin{prop}\label{prop:estimate-bopten}
Let $k\in \mathbb N$. Let $\nu$ be finite and assume $\mG$ to have finite total length. Then for any $1 \leq p \leq \infty$
\begin{equation}\label{eq:estim-bopten}
\boptenergy[k,p](\mG) \ge \frac{4k^2}{\nu(\mV) L(\mG)} .
\end{equation}
\end{prop}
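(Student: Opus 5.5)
The plan is to prove a lower bound for a single cluster first, namely
\[
\llB(\mV_0)\ \ge\ \frac{4}{\nu(\mV_0)\,L(\mG_0)}
\]
for every connected induced subgraph $\mG_0=(\mV_0,\mE_0)$, where $L(\mG_0)=\sum_{\me\in\mE_0}\omega(\me)^{-1}$. We then aggregate this bound over the clusters of an arbitrary partition, using AM--GM and the monotonicity in $p$ from \autoref{lem:continuity-p}. Since the bound will hold for every $\parti\in\classconn(\mG)$, taking the infimum gives \eqref{eq:estim-bopten} without needing existence of a minimiser.

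\emph{Single-cluster estimate.} If $|\mV_0|=1$ then $\llB(\mV_0)=\infty$ and there is nothing to prove, so assume $|\mV_0|\ge2$. Finite total length gives \autoref{assum:unif-can-com} by \autoref{cor:geometric-condition-for-ucc}, so the minimum in \eqref{eq:llb} is attained. Since the form is real, we may take a real-valued minimiser $f$ with $\sum_{\mV_0}\nu f=0$. Set $M=\sup f$ and $m=\inf f$; these are finite because $f$ is bounded.

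\begin{enumerate}
\item The mean-zero condition forces $m\le 0\le M$. Hence $(f-m)(M-f)\ge0$ pointwise, and summing against $\nu$ gives the Bhatia--Davis type bound
\[
\|f\|^2_{\ell^2_{\nu_0}(\mV_0)}\le -mM\,\nu(\mV_0)\le \tfrac14 (M-m)^2\nu(\mV_0).
\]
\item For $\mv,\mw\in\mV_0$, pick a simple path in $\mG_0$ joining them. Its length is at most $L(\mG_0)$, since no edge is repeated. The telescoping Cauchy--Schwarz argument from the proof of \autoref{prop:geometric-condition-for-ucc} then gives
\[
|f(\mv)-f(\mw)|^2\le L(\mG_0)\,\BouForm{\mV_0}(f).
\]
Taking suprema (approximating $M$ and $m$ by values at vertices) yields $(M-m)^2\le L(\mG_0)\BouForm{\mV_0}(f)$.
\item Combining the two bounds gives $\|f\|^2\le \tfrac14\nu(\mV_0)L(\mG_0)\BouForm{\mV_0}(f)$, which is the claimed single-cluster estimate.
\end{enumerate}

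\emph{Aggregation over a partition.} Let $\parti=(\mV_j)_{j=1}^k\in\classconn(\mG)$. Write $\nu_j=\nu(\mV_j)$ and $L_j=L(\mG_j)$. The clusters are disjoint and the edge sets $\mE_j$ are disjoint, so $\sum_j\nu_j=\nu(\mV)$ and $\sum_j L_j\le L(\mG)$. By AM--GM,
\[
\prod_j \nu_j\le (\nu(\mV)/k)^k \quad\text{and}\quad \prod_j L_j\le (L(\mG)/k)^k.
\]
By \autoref{lem:continuity-p}, or simply by the power-mean inequality applied to each partition, it suffices to treat $p=1$. Applying AM--GM once more,
\[
\benergy[k,1](\parti)=\frac1k\sum_j\llB(\mV_j)\ \ge\ \frac4k\sum_j\frac{1}{\nu_jL_j}\ \ge\ 4\Big(\prod_j\nu_jL_j\Big)^{-1/k}\ \ge\ \frac{4k^2}{\nu(\mV)L(\mG)}.
\]
If some cluster is a singleton, the left-hand side is $+\infty$ and the inequality holds trivially. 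Taking the infimum over $\parti$ gives \eqref{eq:estim-bopten} for all $p\in[1,\infty]$.

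\emph{Main obstacle.} The delicate step is the aggregation. A naive harmonic-mean bound, which replaces $\sum_j\nu_jL_j$ by $\nu(\mV)L(\mG)$, loses a factor of $k$. One has to split the product $\prod_j\nu_jL_j$ and bound each factor separately by AM--GM to reach the sharp constant $k^2$. The single-cluster step is routine, but care is needed to justify a real-valued minimiser and to handle the possibly infinite clusters via the supremum and infimum.
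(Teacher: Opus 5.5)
Your proof is correct, but it takes a somewhat different route from the paper's in two places.

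\textbf{Single-cluster estimate.} The paper does not prove this step. It imports $\llB(\mV_0)\ge 4/(\nu(\mV_0)\diam(\mG_0))$ from \cite[Corollary~3.7]{LenSchSto18} and then uses $\diam(\mG_0)\le L(\mG_0)$. You prove the weaker but sufficient $L(\mG_0)$-version from scratch. You combine the Bhatia--Davis bound $\|f\|^2\le -mM\,\nu(\mV_0)\le\frac14(M-m)^2\nu(\mV_0)$ with the telescoping Cauchy--Schwarz estimate along a simple path. If you take the infimum of $L(\gamma)$ over paths $\gamma$ in $\mG_0$ instead of bounding it by $L(\mG_0)$, you get $(M-m)^2\le\diam(\mG_0)\,\BouForm{\mV_0}(f)$. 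That is exactly the cited diameter bound, so your argument is a self-contained replacement for the citation. Your estimate also holds for every admissible test function, so you do not actually need existence of a minimiser.

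\textbf{Aggregation.} The paper applies Jensen's inequality to the convex decreasing function $x\mapsto x^{-2p}$ at the points $x_j=(\nu(\mV_j)L(\mG_j))^{1/2}$. It then uses Cauchy--Schwarz, $\sum_j x_j\le\nu(\mV)^{1/2}L(\mG)^{1/2}$. This treats every finite $p$ at once and leaves $p=\infty$ to \autoref{lem:continuity-p}. You instead reduce to $p=1$ by the power-mean inequality applied to each fixed partition, and then apply AM--GM twice. Your per-partition reduction covers $p=\infty$ directly, without any appeal to continuity in $p$.

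Both aggregations rest on the point you flag as the main obstacle: one must avoid bounding $\sum_j\nu(\mV_j)L(\mG_j)$ by $\nu(\mV)L(\mG)$. Both then yield the same sharp factor $k^2$.
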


\begin{proof}
Suppose first that $1 \leq p < \infty$. By \cite[Corollary~3.7]{LenSchSto18} (with obvious changes due to the fact that $\nu$ is finite but not necessarily a probability measure) we have that
\begin{equation}\label{eq:subopt-estim}
\llB(\mV_0)\ge \frac{4}{\nu(\mV_0)\diam(\mG_0)}\ge \frac{4}{\nu(\mV_0)L(\mG_0)}
\end{equation}
holds for any vertex set $\mV_0\subset \mV$ and the corresponding induced subgraph $\mG_0$,
see also~\cite[Theorem~4.2]{Moh91b} in the unweighted case. Accordingly, by Jensen's inequality (for $\varphi(x)=x^{-2p}$), for any $k$-partition $\parti$,
\[ 
\begin{split}
\frac{1}{4^p k}\sum_{j=1}^k[\llB(\mV_j)]^p&\ge \frac{1}{k} \sum_{j=1}^k\frac{1}{[\nu(\mV_j)L(\mG_j)]^p} = \frac{1}{k} \sum_{j=1}^k \phi(\nu(V_j)^\frac12   L(\mG_j)^\frac12  )\\
&\ge  \left ( \frac{1}{k} \sum_{j=1}^k \nu(\mV_j)^\frac12   L(\mG_j)^\frac12   \right )^{-2p}\\
&\ge \left ( \frac{1}{k}  \left ( \sum_{j=1}^k\nu(\mV_j)\right )^\frac12   \left ( \sum_{j=1}^kL(\mG_j)\right )^\frac12    \right )^{-2p} \ge \left(\frac{k^{2}}{\nu(\mV) L(\mG)}\right)^p,
\end{split}
\]
using the fact that any two clusters have disjoint vertex (and, hence, edge) sets.
Therefore, for each partition $\parti$ we have
\[
\benergy[k,1](\parti)\ge \frac{4k^2}{\nu(\mV)L(\mG)}
\]
and taking the $\inf$ of the left hand side yields
\begin{equation}
\boptenergy[k,1](\mG) \ge \frac{4k^2}{\nu(\mV) L(\mG)} ,
\end{equation}
as we wanted to prove. For $p=\infty$ just use the continuity statement in \autoref{lem:continuity-p}.
\end{proof}

\begin{prop}\label{prop:estimate-dopten}
Let $k\in \mathbb N$. Let $\nu$ be finite and assume $\mG$ to have finite total length. Then for $1 \leq p \leq \infty$
\begin{equation}\label{eq:estim-dopten}
\doptenergy[k,p](\mG) \ge \frac{k^2}{2\nu(\mV) L(\mG)} .
\end{equation}
\end{prop}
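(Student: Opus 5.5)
The plan is to mimic the proof of \autoref{prop:estimate-bopten}. We replace the boundaryless bound \eqref{eq:subopt-estim} by a Dirichlet analogue that uses, for each cluster, the total length of its edges \emph{together with its boundary edges}. Since every edge of $\mG$ is counted at most twice when these enlarged lengths are summed over a partition, this is where the factor $2$ in \eqref{eq:estim-dopten} will come from. We assume $k\ge 2$, so that every cluster $\mV_j$ is a proper subset of $\mV$ and $\delta\mV_j\neq\emptyset$. For $k=1$ the only cluster is $\mV$ itself, which the paper excludes when defining $\llD$, and constants would give zero energy; I would remark on this convention.

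\emph{Step 1: single-cluster estimate.} Let $\mV_0\subsetneq\mV$ be connected, and set
\[
\overline{L}(\mV_0):=\sum_{\me\in\mE_0\cup\partial\mV_0}\frac{1}{\omega(\me)}.
\]
Take $f\in D(\dForm)$ with $\supp f\subset\mV_0$, and fix $\mv\in\mV_0$. Since $\mV_0$ is connected, there is a simple path $\gamma$ running inside $\mV_0$ from $\mv$ to some vertex adjacent to $\delta\mV_0$, followed by one edge of $\partial\mV_0$ to a vertex $\mw\in\delta\mV_0$. All edges of $\gamma$ are distinct and lie in $\mE_0\cup\partial\mV_0$, so $L(\gamma)\le\overline{L}(\mV_0)$. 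Because $f(\mw)=0$, the telescoping and Cauchy--Schwarz argument of \autoref{prop:geometric-condition-for-ucc} gives
\[
|f(\mv)|^2\le L(\gamma)\,\dForm(f)\le \overline{L}(\mV_0)\,\dFormD(f).
\]
Summing against $\nu$ yields $\|f\|^2_{\ell^2_\nu}\le \nu(\mV_0)\,\overline{L}(\mV_0)\,\dForm(f)$. By \eqref{eq:minmax-dir} this gives
\[
\llD(\mV_0)\ge \frac{1}{\nu(\mV_0)\,\overline{L}(\mV_0)}.
\]

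\emph{Step 2: summing over a partition.} Let $\parti=(\mV_j)_{1\le j\le k}\in\classconn(\mG)$; by the definition of $\doptenergy[k,p]$ it suffices to consider connected partitions. Each edge of $\mG$ lies either in exactly one $\mE_j$ or in exactly two of the sets $\partial\mV_j$. Hence $\sum_j\overline{L}(\mV_j)\le 2L(\mG)$, while $\sum_j\nu(\mV_j)=\nu(\mV)$. Now we repeat verbatim the chain in \autoref{prop:estimate-bopten}: Jensen's inequality for $x\mapsto x^{-2p}$, followed by Cauchy--Schwarz on $\sum_j\nu(\mV_j)^{1/2}\overline{L}(\mV_j)^{1/2}$. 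This gives
\[
\frac1k\sum_{j=1}^k\llD(\mV_j)^p\ge\Big(\frac{k^2}{2\nu(\mV)L(\mG)}\Big)^p,
\]
that is, $\denergy[k,p](\parti)\ge \frac{k^2}{2\nu(\mV)L(\mG)}$. Taking the infimum over $\parti$ proves \eqref{eq:estim-dopten} for $p<\infty$. The case $p=\infty$ follows either from \autoref{lem:continuity-p} or directly, since the maximum dominates the $p=1$ mean.

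\emph{Main obstacle.} The delicate point is Step 1, specifically the bookkeeping of the final boundary edge. Its length $1/\omega(\me)$ is not controlled by $L(\mG_j)$ alone, which is exactly why the enlarged lengths $\overline{L}(\mV_j)$ must be used. The price is the double counting of boundary edges in Step 2, and hence the constant $2$, rather than the factor $4$ of the boundaryless case. I would also check that the argument does not require $\mV_j$ to be finite: the path is finite, and finite total length of $\mG$ makes $\overline{L}(\mV_j)$ finite, so nothing further is needed.
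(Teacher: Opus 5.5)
Your argument is correct and has the same structure as the paper's proof. Both combine a one-cluster bound $\llD(\mV_j)\ge 1/(\nu(\mV_j)\,\ell_j)$, the Jensen/Cauchy--Schwarz chain of \autoref{prop:estimate-bopten}, and double counting of cut edges to get $\sum_j\ell_j\le 2L(\mG)$. The only difference is the choice of $\ell_j$: the paper takes $\ell_j=\inr(\mG_j)$ from a known inradius estimate and sums inradius-realizing paths, whereas you take the cruder $\ell_j=\overline{L}(\mV_j)$ and prove the one-cluster bound directly by telescoping. This makes your argument self-contained and explicit about the boundary edge, and your caveat that $k\ge 2$ is needed is justified.
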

\begin{proof}
We replace~\eqref{eq:subopt-estim} by
\begin{equation}\label{eq:subopt-estim-dir}
\lambda_1^{\mathrm{D}}(\mV_j)\ge \frac{1}{\nu(\mV_j)\inr(\mG_j)}\ge \frac{1}{\nu(\mV_j)L(\mG_j)},
\end{equation}
see~\cite[Corollary~4.6]{LenSchSto18}, and carry the calculation through with $\inr(\mG_j)$ in place of $L(\mG_j)$, leading to
\begin{displaymath}
    \frac{1}{k}\sum_{j=1}^k[\llB(\mV_j)]^p \ge \left(\frac{k^{2}}{\nu(\mV) \sum_{j=1}^k \inr(\mG_j)}\right)^p.
\end{displaymath}
Take any $k$ paths $\mathcal{P}_j$ realizing the respective inradii $\inr (\mG_j)$ (so that $\inr (\mG_j) = L(\mathcal{P}_j)$). Note that the paths are not necessarily disjoint, since they may include edges that connected $\mG_j$ to its complement; however, each such edge can belong to at most two paths. Hence we have the inequality
\begin{displaymath}
    \sum_{j=1}^k \inr (\mG_j) \leq 2L(\mG),
\end{displaymath}
from which the claim now follows.
\end{proof}

\begin{prop}\label{prop:estimate-dopten-bimu}
Let $k\in \mathbb N$. Let $\nu\equiv 1$ and $\omega\equiv 1$ for a finite graph $\mG$ on $n$ vertices. Then for $1\leq p \leq \infty$
\begin{equation}\label{eq:estim-dopten-b} 
\doptenergy[k,p](\mG) \ge 2\left(1-\cos\frac{k\pi}{2n-1}\right)
\end{equation}
and
\begin{equation}\label{eq:estim-dopten-c} 
\boptenergy[k,p](\mG) \ge 2\left(1-\cos\frac{k\pi}{n}\right)
\end{equation}
\end{prop}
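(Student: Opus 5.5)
The plan is to reduce to $p=1$, bound each cluster's energy from below by a function of its cardinality alone, and then combine these bounds by Jensen's inequality. First, the reduction: for a fixed partition $\parti$, the normalized power means $\bigl(\frac1k\sum_j\lambda_j^p\bigr)^{1/p}$ are non-decreasing in $p$, with the maximum at $p=\infty$ dominating all of them. Hence $\denergy[k,p](\parti)\ge\denergy[k,1](\parti)$ and $\benergy[k,p](\parti)\ge\benergy[k,1](\parti)$, and taking infima gives the same for the optimal energies. So it suffices to bound $\frac1k\sum_{j=1}^k\lambda(\mV_j)$ from below for an arbitrary $\parti\in\classconn(\mG)$. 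Since $\mG$ is finite, all spectra are discrete and no compactness issues arise. Write $m_j:=|\mV_j|$, so that $\sum_j m_j=n$.

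\emph{Boundaryless case.} If some $m_j=1$, then $\llB(\mV_j)=\infty$ and there is nothing to prove. Otherwise every $m_j\ge2$, which forces $n\ge 2k$. Each $\mG_j$ is a connected unweighted graph on $m_j$ vertices, and with $\nu\equiv1$ the quantity $\llB(\mV_j)$ is its algebraic connectivity. Fiedler's classical bound (the path minimizes algebraic connectivity among connected graphs on $m$ vertices) gives
\[
\llB(\mV_j)\ge g(m_j):=2\Bigl(1-\cos\frac{\pi}{m_j}\Bigr).
\]
The function $g$ is decreasing, and it is convex on $[2,\infty)$, because
\[
g''(x)=2\pi x^{-4}\Bigl(2x\sin\tfrac{\pi}{x}+\pi\cos\tfrac{\pi}{x}\Bigr)\ge0\quad\text{for }x\ge2 .
\]
Jensen's inequality then yields $\frac1k\sum_j g(m_j)\ge g(n/k)=2(1-\cos\frac{k\pi}{n})$, which is \eqref{eq:estim-dopten-c}.

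\emph{Dirichlet case.} Here the same scheme applies with the per-cluster bound replaced by a Dirichlet one. Each $\mV_j$ is a proper connected subset, so there is at least one edge to $\delta\mV_j$, which contributes a positive $\degout$ term in \eqref{eq:dformd-as-schroed}. The extremal configuration should be a path on $m_j$ vertices with a single Dirichlet vertex attached at one end. I would prove $\lambda_1^{\mathrm D}(\mV_j)\ge h(m_j)$ for an explicit $h$ by a standard path-reduction argument. First, deleting interior edges of $\mG_j$ and all boundary edges but one only decreases $\dFormD$. Second, by the variational characterization \eqref{eq:minmax-dir}, a spanning tree with one boundary edge can be dominated by a path, using a rearrangement or symmetrization of the ground state along a longest path. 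Reflecting the path across the Dirichlet vertex turns the problem into an odd eigenfunction on a doubled path, which produces cosines with denominator of order $2m_j\pm1$. One then checks that $h$ is decreasing and convex and applies Jensen with average cluster size $n/k$.

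\emph{Main obstacle.} The step I expect to be hardest is pinning down exactly the constant $2n-1$ in \eqref{eq:estim-dopten-b}. The naive extremal (path with one Dirichlet end) gives $h(m)=2(1-\cos\frac{\pi}{2m+1})$, and Jensen then yields only $2(1-\cos\frac{k\pi}{2n+k})$, which is weaker than the claim. To recover $\frac{k\pi}{2n-1}$ I would try two things, in order. First, exploit that Dirichlet clusters are not independent: the boundary vertices of one cluster lie in other clusters, so the disjointly supported ground states $f_j$ span a $k$-dimensional subspace of $\C^{\mV}$. By min-max this gives $\denergy[k,\infty](\parti)\ge\lambda_k(\mG)$, and I would aim for a comparison of $\lambda_k(\mG)$ with the $k$-th Dirichlet/Neumann eigenvalue of a path, whose eigenvalues are $2(1-\cos\frac{(2i-1)\pi}{2n-1})$-type quantities. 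Second, failing that, I would refine the per-cluster bound by doubling the whole path partition rather than each cluster separately. I expect this to be the crux of the proof; the boundaryless half is routine once Fiedler's bound is invoked.
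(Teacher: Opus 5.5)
Your reduction to $p=1$ via monotonicity of power means, and your proof of the boundaryless bound \eqref{eq:estim-dopten-c} (Fiedler's bound $\llB(\mV_j)\ge 2(1-\cos\frac{\pi}{m_j})$, convexity of $x\mapsto 2(1-\cos\frac{\pi}{x})$ on $[2,\infty)$, and Jensen), are correct and are exactly the paper's argument.

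The Dirichlet half has a genuine gap. The step you flag as the crux cannot be completed, because \eqref{eq:estim-dopten-b} is false as stated.

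\begin{itemize}
\item Take $\mG=K_2$ with unit weights and $k=2$. The only $2$-partition consists of two singletons, each with $\lambda_1^{\mathrm D}=1$; the paper itself records $\doptenergy[2,\infty](\mG)=1$ for this graph right after Proposition~\ref{prop:ld-ev}. The claimed bound is $2(1-\cos\frac{2\pi}{3})=3$.
\item Splitting $\mP_4$ into two halves gives energy $\frac{3-\sqrt5}{2}\approx 0.38$, below the claimed $2(1-\cos\frac{2\pi}{7})\approx 0.75$.
\item For $k=1$ the single cluster $\mV$ carries no Dirichlet vertices, so its energy is $0$. Your assertion that each cluster is a proper subset also needs $k\ge 2$.
\end{itemize}

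What your scheme genuinely gives for $k\ge 2$ is $\doptenergy[k,p](\mG)\ge 2(1-\cos\frac{k\pi}{2n+k})$. It combines the per-cluster bound $\lambda_1^{\mathrm D}(\mV_j)\ge 2(1-\cos\frac{\pi}{2m_j+1})$ with convexity and Jensen. For that per-cluster bound the paper cites Bifulco--Mugnolo rather than relying on a sketched path reduction like yours. This corrected bound is attained in both examples above. The paper's sketch uses the same ingredients, so it too supports only $2n+k$ in place of $2n-1$.

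Your first fallback is also unsound on its own terms: $\denergy[k,\infty](\parti)\ge\lambda_k(\mG)$ fails on graphs. Ground states of adjacent clusters produce cross terms on the edges joining them, so the Rayleigh quotient on their span is not bounded by $\max_j\lambda_1^{\mathrm D}(\mV_j)$. Proposition~\ref{prop:ld-ev} only gives $\lambda_k(\mG)\le 2\doptenergy[k,\infty](\mG)$, and $K_2$, with $\lambda_2=2$, shows the factor $2$ cannot be dropped.
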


\begin{proof}
For $p=1$ the proof of this inequality can be carried out like \autoref{prop:estimate-bopten}, up to replacing the isoperimetric inequality \eqref{eq:subopt-estim}
by \cite[Corollary~5.12]{BifMug25} or \cite[Section 4.3]{Fie73} (for the Dirichlet case or the boundaryless case, respectively), invoking convexity of \(x\mapsto 1-\cos\frac{k\pi }{2x+1}\), and, in the Dirichlet case, observing that at most $n-1$ vertices can be outside the set of vertices on which Dirichlet conditions are imposed.

The statement then follows from the monotonicity statement in \autoref{lem:continuity-p}.
\end{proof}

\begin{prop}\label{prop:estimate-nopten}
Let $k\in \mathbb N$ and $1 \leq p \leq \infty$. Let $\nu$ be finite and assume $\mG$ to have finite total length. Then
\begin{equation}\label{eq:estim-nopten}
\noptenergy[k,p](\mG) \ge \frac{4k^2}{\nu(\mV) L(\mG)} .
\end{equation}
\end{prop}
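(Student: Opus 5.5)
The plan is to reduce the Neumann bound to the boundaryless bound already proved. By \autoref{prop:comparison-bn}, under the standing assumptions ($\nu$ finite, $\mG$ of finite total length), we have $\boptenergy[k,p](\mG)\le\noptenergy[k,p](\mG)$ for every $k\in\N$ and $1\le p\le\infty$. Combining this with \autoref{prop:estimate-bopten}, which gives $\boptenergy[k,p](\mG)\ge 4k^2/(\nu(\mV)L(\mG))$, yields
\[
\noptenergy[k,p](\mG)\ge\boptenergy[k,p](\mG)\ge\frac{4k^2}{\nu(\mV)L(\mG)},
\]
which is exactly \eqref{eq:estim-nopten}.

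Before invoking these results, I would check that both quantities are well defined. Finite total length implies finite maximal path length, since $L_{\max}(\mG)\le L(\mG)$. Hence \autoref{cor:geometric-condition-for-ucc}.\eqref{item:fin-leng} gives both \autoref{assum:unif-can-com} and \autoref{assum:neum-exist}. Consequently, for every connected cluster $\mV_j$ with at least two vertices, the minima defining $\llB(\mV_j)$ and $\llN(\mV_j)$ are attained. For singleton clusters both energies equal $+\infty$, so the inequality is trivial for them.

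The key pointwise fact behind \autoref{prop:comparison-bn} is $\llB(\mV_j)\le\llN(\mV_j)$ for each cluster, which holds by \autoref{rem:dbn-inequalities}. Indeed, $D(\dFormN)\subset D(\dFormB)$, the mean-zero constraint is the same in both variational problems, and $\dFormB\le\dFormN$. Since $t\mapsto t^p$ and the maximum are monotone, this inequality passes to $\nenergy[k,p](\parti)\ge\benergy[k,p](\parti)$ for every $\parti\in\classconn(\mG)$. Taking the infimum over $\classconn(\mG)$ then gives the comparison of the optimal energies.

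There is no real obstacle in this argument. The only point needing care is the case $p=\infty$, but \autoref{prop:estimate-bopten} is already stated for all $p\in[1,\infty]$. Alternatively, one can simply note that $\benergy[k,\infty](\parti)\ge\benergy[k,1](\parti)$.
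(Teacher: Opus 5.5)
Your proposal is correct and is exactly the paper's argument: the paper obtains \eqref{eq:estim-nopten} in one line by chaining \autoref{prop:comparison-bn} with \autoref{prop:estimate-bopten}. Your extra checks are consistent with the paper and only make explicit what those two propositions already rely on. These are the well-definedness of the energies via \autoref{cor:geometric-condition-for-ucc}, the pointwise inequality $\llB\le\llN$ from \autoref{rem:dbn-inequalities}, and the treatment of $p=\infty$.
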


\begin{proof}
This is an immediate consequence of \autoref{prop:estimate-bopten} and \autoref{prop:comparison-bn}.
\end{proof}
Further estimates could be found using the lower eigenvalue bounds in~\cite{ShiYu25}.

\begin{prop}
\label{prop:ld-ev}
    Let $\nu$ be finite and assume  the embedding $h^1(\mV) \hookrightarrow \ell_\nu^2(\mV)$ to be compact. Then for any $k \in \mathbb N$,
    \begin{displaymath}
        \lambda_k(\mG) \leq 2\doptenergy[k,\infty](\mG)
    \end{displaymath}
    where $\lambda_k$ is the $k$-th eigenvalue of the discrete Laplacian on $\mG$ (enumerated so that $\lambda_1(\mG) = 0$).
\end{prop}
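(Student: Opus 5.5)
The idea is to use the Courant--Fischer min-max principle for $\dLapl$ with test functions assembled from the Dirichlet ground states of the clusters of an arbitrary partition. We do not even need the existence result \autoref{thm:dir-existence}: it suffices to show that $\lambda_k(\mG)\le 2\denergy[k,\infty](\parti)$ for every $\parti\in\classconn(\mG)$ and then take the infimum. Since $h^1(\mV)\hookrightarrow\ell^2_\nu(\mV)$ is compact, so is $D(\dForm)\hookrightarrow\ell^2_\nu(\mV)$. Hence $\dLapl$ has compact resolvent, and min-max applies:
\[
\lambda_k(\mG)=\min_{\substack{U\subset D(\dForm)\\ \dim U=k}}\ \max_{g\in U\setminus\{0\}}\frac{\dForm(g)}{\|g\|^2_{\lSP(\mV)}}.
\]
By \autoref{lem:dlapld-selfadj}.\eqref{item:monotonicity-ppsp}, each $\llD(\mV_j)$ is attained.

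Fix $\parti=(\mV_j)_{1\le j\le k}$. For each $j$, take $f_j\in D(\dForm)$ with $\supp f_j\subset\mV_j$, $\|f_j\|_{\lSP(\mV)}=1$ and $\dForm(f_j)=\llD(\mV_j)$, using \eqref{eq:minmax-dir}. The $f_j$ have pairwise disjoint supports, so they are orthonormal in $\lSP(\mV)$ and $U:=\aufspan\{f_1,\dots,f_k\}$ is a $k$-dimensional subspace of $D(\dForm)$. For $g=\sum_j c_jf_j$ we have $\|g\|^2=\sum_j|c_j|^2$.

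The key step is the estimate $\dForm(g)\le 2\sum_j|c_j|^2\dForm(f_j)$, which we obtain by splitting the edge sum in \eqref{eq:in-form} edge by edge.
\begin{itemize}
\item Consider an edge $\{\mv,\mw\}$ with both ends in the same cluster $\mV_j$. It contributes $|c_j|^2\omega(\me)|f_j(\mv)-f_j(\mw)|^2$, which is exactly $|c_j|^2$ times its contribution to $\dForm(f_j)$.
\item Consider an edge with $\mv\in\mV_i$, $\mw\in\mV_j$, $i\ne j$. It contributes $\omega(\me)|c_if_i(\mv)-c_jf_j(\mw)|^2$, and by $|a-b|^2\le 2|a|^2+2|b|^2$ this is at most $2|c_i|^2\omega(\me)|f_i(\mv)|^2+2|c_j|^2\omega(\me)|f_j(\mw)|^2$. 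Since $f_i$ vanishes at $\mw$ and $f_j$ vanishes at $\mv$, these two terms are precisely $2|c_i|^2$ and $2|c_j|^2$ times the contributions of this boundary edge to $\dForm(f_i)$ and $\dForm(f_j)$. Compare the $\degout$ term in \eqref{eq:dformd-as-schroed}.
\end{itemize}
Summing over all edges (all terms are nonnegative, so rearrangement is fine) gives
\[
\dForm(g)\le 2\sum_j|c_j|^2\llD(\mV_j)\le 2\max_j\llD(\mV_j)\,\|g\|^2_{\lSP(\mV)}.
\]
Min-max then yields $\lambda_k(\mG)\le 2\denergy[k,\infty](\parti)$, and taking the infimum over $\parti$ concludes the proof.

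The only delicate point is bookkeeping, not analysis. We must check that $U\subset D(\dForm)$; this holds because $\llD(\mV_j)$ was defined through $f\in D(\dForm)$ with support in $\mV_j$. We must also check that the factor $2$ is charged only on cut edges, never on interior edges. The indexing convention $\lambda_1(\mG)=0$ is consistent, since the $k$-th min-max value is taken over $k$-dimensional subspaces.
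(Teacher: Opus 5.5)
Your proposal is correct and is essentially the paper's own argument: you apply Courant--Fischer to the span of the zero-extended Dirichlet ground states of the clusters of an arbitrary partition. You then split the edge sum into intra-cluster edges, which cost nothing, and cut edges, which pick up the factor $2$ from $|a-b|^2\le 2|a|^2+2|b|^2$, and finally take the infimum over partitions. Your bookkeeping charges each cut edge directly to its contributions in $\dForm(f_i)$ and $\dForm(f_j)$; this is, if anything, slightly cleaner than the paper's detour through $\degout$.
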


Note that the inequality $\lambda_k \leq \doptenergy[k,\infty]$, true for the corresponding spectral minimal partition problem on Euclidean domains (and quantum graphs), is not true in this context, as the simple example of a graph with two vertices $\mv,\mw$ (and constant weights, $\nu(\mv)=\nu(\mw)=\omega(\{\mv,\mw\})=1$) shows: it is immediate that $\lambda_2=2$, while $\doptenergy[k,\infty]=1$.

\begin{proof}[Proof of \autoref{prop:ld-ev}]
    Denote by $\parti$ any $k$-partition of $\mG$, and by $\psi_1,\ldots,\psi_k$ the eigenvectors of the clusters, considered as elements of $\ell_\nu^2(\mV)$ after extension by zero, and normalized to have $\ell_\nu^2$-norm $1$. It is an immediate consequence of the Courant--Fischer min-max principle that
    \begin{displaymath}
        \lambda_k(\mG) \leq \max_{\substack{0 \neq u \in \aufspan \{\psi_1,\ldots,\psi_k\}\\ \|u\|_{\ell_\nu^2(\mV)}=1}} \sum_{\me = \{\mv,\mw\} \in \mE} \omega(\me)|u(\mv)-u(\mw)|^2.
    \end{displaymath}
    Take $u=\sum_{j=1}^k \alpha_j \psi_j$, where $\sum_{j=1}^k |\alpha_j|^2 = 1$. For each edge $\me$, we consider three cases for $\me$: (1) $\me$ is contained in some $\mG_j$; (2) $\me$ is on the boundary of some $\mG_j$; (3) $\me$ is contained in the complement of the $\mG_j$ together with their boundary layers. In case (3) we automatically have $u(\mv)=u(\mw)=0$; in case (1), $u(\mv)-u(\mw) = \alpha_j(\psi_j(\mv)-\psi_j(\mw))$.
    
    In case (2), if $\me$ is only on the boundary of $\mG_j$ and no other cluster, then the same conclusion is true as in case (1). Otherwise, the only other possibility is that $\me$ is a boundary edge between $\mG_j$ and another cluster $\mG_i$. In this case
    \begin{displaymath}
        |u(\mv)-u(\mw)|^2 = |\alpha_j\psi_j(\mv)-\alpha_i\psi_i(\mw)|^2
        \leq 2(|\alpha_j\psi_j(\mv)|^2 + |\alpha_i\psi_i(\mw)|^2).
    \end{displaymath}
    Summing over all edges $\me$ leads to
    \begin{multline*}
        \sum_{\me = \{\mv,\mw\} \in \mE} \omega(\me)|u(\mv)-u(\mw)|^2 \\
        \qquad 
        \leq \sum_{j=1}^k \left(\sum_{\me=\{\mv,\mw\} \subset \mG_j} \omega(\me)|\alpha_j|^2|\psi_j(\mv)-\psi_j(\mw)|^2 + \sum_{\mv \in \mV_j} 2\degout^{\mV_j}(\mv)|\alpha_j|^2|\psi_j(\mv)|^2\right)
    \end{multline*}
    (using the notation of \eqref{eq:degout-def}), which in turn leads to
    \begin{displaymath}
        \sum_{\me = \{\mv,\mw\} \in \mE} \omega(\me)|u(\mv)-u(\mw)|^2
        \leq 2\sum_{j=1}^k |\alpha_j|^2 \lambda_1^{\mathrm{D}}(\mG_j).
    \end{displaymath}
    Since $\sum_{j=1}^k |\alpha_j|^2 = 1$ and the $k$-partition $\parti$ was arbitrary, the claim now follows.
\end{proof}

\section{A family of examples}
\label{sec:snakes-and-ladders}

In this section we will study a model case of the spectral minimal partitions of the two-ladder graph $\mLn= \mP_n \times \mP_2$ (for short, ladders), where $\mP_n$ is a path graph with $n$ vertices, $2 \leq n \leq \infty$, see \autoref{fig:paths-to-the-ladder} and \autoref{fig:two-ladder-intro}. For simplicity we will take partitions into $k=2$ cells, and $p=\infty$, we will however compare all three energy functionals, namely Dirichlet, boundaryless and Neumann.

We will distinguish between the finite ($n<\infty$) and infinite ($n=\infty$) cases, with rather different goals in each case: 
\begin{itemize}
    \item In the finite case, where we can take all weights to be one and can thus determine minimizing partitions analytically, we will illustrate how the different functionals identify different kinds of minimizers {(we observed something similar for the eigenvalues themselves in \autoref{exa:prismatic})};
    \item In the infinite case, we will illustrate how without suitable summability conditions on the weights, minimizing partitions need not exist, and indeed the problem need not be well defined.
\end{itemize}

\begin{figure}[ht]
\centering

\begin{tikzpicture}[scale=1, line cap=round, line join=round]
  \def\n{5} 
  \foreach \i in {0,...,\n} {
    \fill (\i,0) circle (1.5pt);
  }
  \foreach \i in {0,...,\numexpr\n-1\relax} {
    \draw[thick] (\i,0) -- (\i+1,0);
  }
  \node at (\n/2,-0.55) {$\mP_n$};
\end{tikzpicture}
\hspace{2em} 
\begin{tikzpicture}[scale=1, line cap=round, line join=round]
  \coordinate (a) at (0,0);
  \coordinate (b) at (0,1.5);

  \draw[thick] (a) -- (b);
  \fill (a) circle (1.5pt);
  \fill (b) circle (1.5pt);
  \node[left] at (0,0.75) {$\mP_2$};
\end{tikzpicture}

\caption{The path graphs $\mP_n$ (for $n=6$) and $\mP_2$...}
\label{fig:paths-to-the-ladder}
\end{figure}

\begin{figure}[ht]
\centering
    \begin{tikzpicture}[scale=1, line cap=round, line join=round]
  \def\n{3} 
  \def\h{1.2}

  \foreach \i in {0,...,\n} {
    \fill (\i,0) circle (1.5pt);
    \fill (\i,\h) circle (1.5pt);
  }

  \foreach \i in {0,...,\numexpr\n-1\relax} {
    \draw[thick] (\i,0) -- (\i+1,0);
    \draw[thick] (\i,\h) -- (\i+1,\h);
  }

  \foreach \i in {0,...,\n} {
    \draw[thick] (\i,0) -- (\i,\h);
  }

  \draw[dashed] (\n, 0) -- (\n +1, 0);
  \draw[dashed] (\n, \h) -- (\n+1, \h);
  \draw[thick] (\n+1, 0) -- (\n+1, \h);

\fill (\n+1,0) circle (1.5pt);
\fill (\n+1, \h) circle (1.5pt);

  \node[left] at (0,-0.4) {$b_1$};
  \node[left] at (0,\h+0.1) {$a_1$};
  \node[right] at (\n+1,\h+0.1) {$a_{n}$};
  \node[right] at (\n+1,-0.4) {$b_{n}$};

\end{tikzpicture}

\caption{... and the two-ladder graph $\mLn=\mP_n \times \mP_2$.}
\label{fig:two-ladder-intro}
\end{figure}

\subsection{Finite ladders} We first consider the case of finite graphs, i.e. $n < \infty$, and assume for the meantime that all vertex and edge weights are $1$, i.e. 
\[
\nu(\mv) = \omega(\me) = 1 \qquad\hbox{for all }\mv \in \mV (\mLn)\hbox{ and all }\me \in \mE (L_n).
\]
Clearly, on any finite graph, including the $\mLn$, all our assumptions are satisfied; thus there will always exist minimizing partitions for all three of our functionals, for all $1 \leq k \leq 2n = |\mV(\mLn)|$. However, as noted, for simplicity we will only consider $k=2$; in this case, since our partitions need to be exhaustive, all clusters of all admissible $2$-partitions will be subgraphs of $\mL_n$ of the following form.

\begin{defi}
\label{def:ladder-clusters}
    Denote by $\mG_{\ell,m}\subset \mathbb N \times \mP_2$ the graph obtained by attaching a pendant path $\mP_m$, $m \geq 0$ to the ladder with $\ell \geq 0$ rungs, $\mP_\ell \times \mP_2$ (see \autoref{fig:ladder-clusters}; the case $m=0$ is taken to stand for a ladder of depth $\ell$, the case $\ell=0$ corresponds to a path of length $m$).
\end{defi}

\begin{figure}[ht]
\begin{tikzpicture}[scale=1, line cap=round, line join=round]
  \def\h{1.2}
  \def\n{4}

  \foreach \i in {0,...,\n} {
    \fill (\i,0) circle (1.5pt);
    \fill (\i,\h) circle (1.5pt);
  }

  \foreach \i in {0,...,\numexpr\n-1\relax} {
    \draw[thick] (\i,0) -- (\i+1,0);
    \draw[thick] (\i,\h) -- (\i+1,\h);
  }

  \foreach \i in {0,...,\n} {
    \draw[thick] (\i,0) -- (\i,\h);
  }

  \draw[thick] (\n,\h) -- (\n+1,\h) -- (\n+2,\h) -- (\n+3,\h);

  \fill (\n+1,\h) circle (1.5pt);
  \fill (\n+2,\h) circle (1.5pt);
  \fill (\n+3,\h) circle (1.5pt);

  \node at (1.5,-0.35) {$\cdots$};
  \node at (1.5,\h+0.35) {$\cdots$};

  \node at (\n+2,\h-0.35) {$\cdots$};

  \node[below] at (0,0) {$\mv_{2,1}$};
  \node[above] at (0,\h) {$\mv_{1,1}$};
  \node[below] at (\n,0) {$\mv_{2,\ell}$};
  \node[above] at (\n,\h) {$\mv_{1,\ell}$};
  \node[below] at (\n+3,\h) {$\mw_{m}$};

  \node[below] at (\n+1,\h) {$\mw_{1}$};

  \draw[decorate,decoration={brace,amplitude=4pt,mirror}]
    (-0.4,-0.8) -- (\n-0.1,-0.8);
  \node at (1.5,-1.2) {$\mP_\ell \times \mP_2$};

  \draw[decorate,decoration={brace,amplitude=4pt,mirror}]
    (\n+0.1,-0.8) -- (\n+3,-0.8);
  \node at (5.5,-1.2) {$\mP_m$};
\end{tikzpicture}
\caption{The graph $\mG_{\ell,m}$ consisting of a ladder of length $\ell \geq 0$, $\mP_\ell \times \mP_2$, to which is appended a path graph $\mP_m$ on $m \geq 0$ vertices.}
\label{fig:ladder-clusters}
\end{figure}

Before identifying the minimizers, we will need a few preliminary results. The first is ostensibly for the boundaryless case but will yield the Neumann case as a corollary. Note that it is valid on more general graphs than just $\mL_n$.

\begin{lemma}\label{lem:optimal-config-boundaryless}
    Suppose that $\mG$ has $k\cdot n$ vertices and suppose all vertex and edge weights are $1$. If there exists a $k$-partition $\mathcal P=\{ \mV_1, \ldots, \mV_k\}$ such that for each $i=1,\ldots,k$ the induced subgraph $\mG_i$ associated with $\mV_i$ is a path of length $n$, then $\mathcal P$ is minimizing for $\boptenergy[k,\infty](\mG)$, i.e. $\benergy[k,\infty](\mathcal P) = \boptenergy[k,\infty](\mG)$. In this case, if $\hat{\mathcal P}=(\hat \mV_1, \ldots, \hat \mV_k)$ is another $k$-partition of $\mG$ for which at least one induced subgraph $\hat\mG_i$ is not a path of length $n$, then $\hat{\mathcal P}$ is not minimizing. 
\end{lemma}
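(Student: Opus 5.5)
We will argue via the classical extremal property of the path (Fiedler's result, \cite[Section 4.3]{Fie73}). Among all connected graphs on $m$ vertices with unit weights, the algebraic connectivity $\llB$ is minimized exactly by the path $\mP_m$. Its value is $\llB(\mP_m)=2(1-\cos\frac{\pi}{m})$. Moreover, this quantity is strictly decreasing in $m$.

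First, the energy of the given partition $\mathcal P$ is easy to compute. Each cluster $\mG_i$ is a path on $n$ vertices, so $\benergy[k,\infty](\mathcal P)=2(1-\cos\frac{\pi}{n})$.

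Next, let $\hat{\mathcal P}=(\hat\mV_1,\dots,\hat\mV_k)$ be any connected $k$-partition. Since it is exhaustive, its cluster sizes $m_i=|\hat\mV_i|$ satisfy $\sum_i m_i=kn$, and hence $\min_i m_i\le n$. Choose $i$ with $m_i\le n$. If $m_i=1$, then $\llB(\hat\mV_i)=\infty$ by convention, and the claimed inequality is trivial. Otherwise $\hat\mG_i$ is connected, and Fiedler's bound together with monotonicity in the number of vertices gives
\[
\benergy[k,\infty](\hat{\mathcal P})\ge \llB(\hat\mV_i)\ge 2\Bigl(1-\cos\tfrac{\pi}{m_i}\Bigr)\ge 2\Bigl(1-\cos\tfrac{\pi}{n}\Bigr)=\benergy[k,\infty](\mathcal P).
\]
This proves that $\mathcal P$ is minimizing.

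For strictness, suppose some $\hat\mG_j$ is not a path on $n$ vertices. There are two cases. In the first case, all $m_i=n$. Then $\hat\mG_j$ is a connected graph on $n$ vertices that is not a path, and the equality case of Fiedler's theorem gives $\llB(\hat\mV_j)>2(1-\cos\frac{\pi}{n})$. This alone does not finish the argument, because the maximum over clusters could still be attained at a different cluster which is a path.

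This is the main obstacle: strictness really requires that \emph{every} cluster contributes strictly more than $2(1-\cos\frac{\pi}{n})$. With the statement's conclusion "not minimizing" read literally, this may fail, since other clusters might still be paths of length $n$. I would therefore verify the statement's intended sense, namely that a non-path cluster of size $n$ yields a strictly larger value. If that is the intended sense, the equality-case uniqueness in Fiedler's theorem handles it.

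In the second case, some $m_i<n$. Then strict monotonicity gives $\llB(\hat\mV_i)\ge 2(1-\cos\frac{\pi}{m_i})>2(1-\cos\frac{\pi}{n})$, so $\hat{\mathcal P}$ is strictly worse.

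The key ingredients to invoke or justify are therefore twofold. The first is the uniqueness of the path as the minimizer of algebraic connectivity among connected graphs with a fixed number of vertices. I would cite Fiedler for this, or prove it by comparing $\llB$ with the spanning tree and using edge monotonicity of $\dFormB$ together with the tree bound. The second is the elementary strict monotonicity of $m\mapsto 1-\cos\frac{\pi}{m}$.
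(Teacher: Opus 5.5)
Your ingredients and case split match the paper's proof: pigeonhole on the cluster sizes $m_i=|\hat\mV_i|$, Fiedler's bound $\llB(\hat\mV_i)\ge\llB(\mP_{m_i})$ with its equality case, and strict monotonicity of $m\mapsto\llB(\mP_m)=2(1-\cos\frac{\pi}{m})$. But you leave the strictness part unfinished because of an obstacle that does not exist.

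The functional $\benergy[k,\infty]$ is a \emph{maximum} over clusters. So as soon as one cluster satisfies $\llB(\hat\mV_j)>\llB(\mP_n)$, you get
\[
\benergy[k,\infty](\hat{\mathcal P})=\max_{i}\llB(\hat\mV_i)\ge\llB(\hat\mV_j)>\llB(\mP_n)=\benergy[k,\infty](\mathcal P),
\]
whatever the other clusters are. Requiring every cluster to be strictly worse would matter for a minimum over clusters, not a maximum. The scenario you describe, where the maximum is attained at a cluster that is a path on $n$ vertices, is self-contradictory: that maximum would then equal $\llB(\mP_n)<\llB(\hat\mV_j)$. Hence the lemma holds in its literal sense and no reinterpretation is needed. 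Adding this one line to both of your cases (all $m_i=n$ with a non-path cluster, or some $m_i<n$) completes the proof exactly as in the paper.

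A minor point on your fallback for the equality case: edge monotonicity of $\dFormB$ is only non-strict. Comparing with a spanning tree therefore yields strictness directly only when that tree is not $\mP_n$, and even then you need uniqueness of the path as minimizer among trees. If the spanning tree is $\mP_n$ itself, you need an extra argument. For instance, $\llB(\mP_n)$ is simple with an eigenvector taking pairwise distinct values. A minimizer for a strictly larger graph with the same $\llB$ would then also minimize for $\mP_n$, hence be a multiple of that eigenvector, and the extra edges would add strictly positive energy. Citing Fiedler's characterization of equality, as the paper does, avoids this.
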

\begin{proof}
    Take $\mathcal P$ as described in the statement, and let $\hat{\mathcal P} =(\hat \mV_1, \ldots, \hat \mV_k)$ be any other $k$-partition on $\mG$ which is not of this form. Then by the pigeonhole principle either $|\mV_i|=n$ for each $i=1,\ldots, k$ or there exists $j=1,\ldots,k$ such that $|\mV_j| \le n-1$.
    
    Suppose the latter, then by monotonicity of $\llB (\mP_n)=4\sin^2\left(\frac{\pi}{2n}\right)$ (see~\cite[Section~5]{AndMor71}) with respect to $n$ and by Fiedler's inequality (see \cite[4.3]{Fie73}, noting that $\llB(\hat\mV_i)$ corresponds to the algebraic connectivity of the corresponding induced subgraph $\hat\mG_i$),
    \begin{gather*}
        \benergy[k,\infty](\hat{\mathcal P})\ge \llB(\hat\mV_j) \ge \llB(\mP_{|\hat\mV_j|}) > \llB(\mP_n) = \benergy[k,\infty](\mathcal P).
    \end{gather*}

    Suppose now that $|\hat \mV_i|=n$ for all $i=1,\ldots,k$; then not all $\hat\mG_i$ are path graphs. Suppose $\hat\mG_j$ is not a path. Then, by the characterization of equality in Fiedler's inequality (see \cite[Theorem~4.5]{Fie89}), $\llB(\hat\mV_j) > \llB(\mP_n)$, whence again $\benergy[k,\infty](\hat{\mathcal P}) > \benergy[k,\infty](\mathcal P)$.

    We conclude that $\mathcal P$ is minimizing $\boptenergy[k,\infty](\mG)$, and no other kind of partition which does not consist of $k$ path graphs on $n$ vertices each, can be minimizing.
\end{proof}

We next give an auxiliary result which will allow us to analyze the Dirichlet case.

\begin{lemma}\label{lem:dirichletbox}
Let $\mG_{\ell,m}$ be as in \autoref{def:ladder-clusters}, with $\ell \geq 0$, and assume its boundary to consist of the vertices $\mv_{2,\ell},\mw_1,\ldots,\mw_m$ (using the labeling scheme depicted in \autoref{fig:ladder-clusters},with the vertex $\mv_{2,\ell}$ being counted if and only if $\ell \geq 1$). Then for $m \ge 2$ we have
\begin{equation*}
    \llD(\mG_{\ell,m}) > \llD(\mG_{\ell+1,m-2}).
\end{equation*}
\end{lemma}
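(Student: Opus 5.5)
The plan is a \emph{transplantation} argument. I would take a Dirichlet ground state of $\mG_{\ell,m}$, move it onto $\mG_{\ell+1,m-2}$ with almost no change, and show that its Rayleigh quotient strictly drops. Both graphs have $2\ell+m$ vertices. $\mG_{\ell+1,m-2}$ arises from $\mG_{\ell,m}$ by three changes: the first pendant vertex $\mw_1$ becomes the top vertex $\mv_{1,\ell+1}$ of a new rung; the last pendant vertex $\mw_m$ is deleted; and a new bottom vertex $\mathsf b:=\mv_{2,\ell+1}$ is added below $\mw_1$. I would work with the Schr\"odinger form of the Dirichlet energy \eqref{eq:dformd-as-schroed}: edge differences inside the cluster, plus $\degout(\mw)|g(\mw)|^2$ for every vertex of the cluster. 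All weights are $1$, so $\degout$ simply counts neighbours in the complementary cluster of the ladder $\mL_n$. The hypothesis on the boundary gives $\degout=1$ at $\mv_{2,\ell}$ and at $\mw_1,\dots,\mw_{m-1}$. The last pendant vertex $\mw_m$ has $\degout=2$ (neighbour below and to the right), or $1$ if it sits at the end of $\mL_n$; I would carry both cases.

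First I fix a minimizer $\varphi$ of \eqref{eq:minmax-dir} on $\mG_{\ell,m}$. By Perron--Frobenius I may take $\varphi>0$. From the eigenvalue equation on the pendant path I would establish the monotonicity
\[
\varphi(\mw_1)\ge\varphi(\mw_2)\ge\dots\ge\varphi(\mw_m).
\]
This follows by a discrete maximum-principle argument: on the path, $\lambda\varphi(\mw_i)=2\varphi(\mw_i)-\varphi(\mw_{i-1})-\varphi(\mw_{i+1})+\varphi(\mw_i)$, so $\varphi$ is a discrete subsolution there. I would also record the comparison of $\varphi(\mw_1)$ with $\varphi(\mv_{1,\ell})$ and $\varphi(\mv_{2,\ell})$.

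Next I define the test function $\psi$ on $\mG_{\ell+1,m-2}$. It equals $\varphi$ on all common vertices (with $\mw_1\mapsto\mv_{1,\ell+1}$ and $\mw_i$ kept for $2\le i\le m-1$), and $\psi(\mathsf b):=\varphi(\mw_m)$. Then $\|\psi\|=\|\varphi\|$, and it suffices to show $\DirForm{}(\psi)<\DirForm{}(\varphi)$ for the two respective clusters. The difference of the two energies is local.
\begin{itemize}
\item \emph{Lost terms:} the edge term $|\varphi(\mw_{m-1})-\varphi(\mw_m)|^2$; the potential $\degout(\mw_m)\varphi(\mw_m)^2$; and one unit of potential each at $\mv_{2,\ell}$ and $\mw_1$, whose lower neighbours now lie inside the cluster.
\item \emph{Gained terms:} the edges $|\varphi(\mw_1)-\varphi(\mw_m)|^2$ and $|\varphi(\mv_{2,\ell})-\varphi(\mw_m)|^2$; the potential $\degout(\mathsf b)\varphi(\mw_m)^2$ with $\degout(\mathsf b)\le 1$; and one extra unit of potential at $\mw_{m-1}$, which has become the path end.
\end{itemize}
Using $ab\ge0$ and the monotonicity, the gains should be dominated. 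For instance, $|\varphi(\mw_1)-\varphi(\mw_m)|^2\le\varphi(\mw_1)^2-\varphi(\mw_m)^2$ when $0\le\varphi(\mw_m)\le\varphi(\mw_1)$, and similarly at $\mv_{2,\ell}$. The extra potential $\varphi(\mw_{m-1})^2$ then has to be absorbed by $\degout(\mw_m)\varphi(\mw_m)^2$ together with leftover terms. Strictness comes from $\varphi>0$: at least one of the inequalities used is strict, for example $-2\varphi(\mw_1)\varphi(\mw_m)<0$. Combining this with \eqref{eq:minmax-dir} on $\mG_{\ell+1,m-2}$ gives the claim.

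The main obstacle is the bookkeeping in the last step. The extra boundary penalty at $\mw_{m-1}$ is not obviously paid for, and the naive choice $\psi(\mathsf b)=\varphi(\mw_m)$ may be too crude. If so, I would instead choose $\psi(\mathsf b)=t$ and optimize $t$. Alternatively, I would shift the whole pendant tail by one place, taking $\psi(\mw_i)=\varphi(\mw_{i+1})$ and placing $\varphi(\mw_1)$ on the new rung. In either version the eigenvalue equation at $\mw_m$, namely $(\lambda-1-\degout(\mw_m))\varphi(\mw_m)=-\varphi(\mw_{m-1})$, would be used to trade $\varphi(\mw_{m-1})$ against $\varphi(\mw_m)$. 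The degenerate case $\ell=0$ has no $\mv_{2,\ell}$ and needs a separate, simpler version of the same computation.
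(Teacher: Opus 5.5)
\textbf{The decisive step is missing.} After listing lost and gained terms you only say the gains ``should be dominated''. You concede that the penalty at $\mw_{m-1}$ is not obviously paid for. Your sample bound $(x-z)^2\le x^2-z^2$ discards the term $2z(x-z)$, which is exactly what pays for that penalty. The analogous bound at $\mv_{2,\ell}$ would moreover need $\varphi(\mw_m)\le\varphi(\mv_{2,\ell})$, which you never show.

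Computed exactly, your own bookkeeping closes. Set $x=\varphi(\mw_1)$, $y=\varphi(\mv_{2,\ell})$, $u=\varphi(\mw_{m-1})$, $z=\varphi(\mw_m)=\psi(\mathsf b)$. Then the energy of $\psi$ minus that of $\varphi$ is
\[
\bigl[(x-z)^2-x^2\bigr]+\bigl[(y-z)^2-y^2\bigr]+\bigl[u^2-(u-z)^2\bigr]+\bigl(\degout(\mathsf b)-\degout(\mw_m)\bigr)z^2 ,
\]
which equals
\[
2z(u-x-y)+\bigl(1+\degout(\mathsf b)-\degout(\mw_m)\bigr)z^2 .
\]
For $\degout(\mw_m)=2$ and $\degout(\mathsf b)=1$ this is $2z(u-x-y)$, which is negative as soon as $u\le x$. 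For $\degout(\mw_m)=1$ an extra $+z^2$ survives and needs a further comparison.

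\textbf{The monotonicity $u\le x$ is not established either.} On the pendant path the eigenvalue equation reads $\varphi(\mw_{i-1})+\varphi(\mw_{i+1})=(3-\lambda)\varphi(\mw_i)$, with $\mw_0:=\mv_{1,\ell}$ and $\lambda=\llD(\mG_{\ell,m})$. This makes $\varphi$ a subsolution only if $\lambda\le 1$. Even then, convexity yields monotonicity only together with the sign of one increment. Both can be supplied for $\ell\ge1$:
\begin{enumerate}
\item The indicator of the ladder block $\mP_\ell\times\mP_2$ has Rayleigh quotient $2/(2\ell)$ and is not the positive ground state, so $\lambda<1$.
\item The equation at $\mw_m$ gives $\varphi(\mw_{m-1})=(3-\lambda)\varphi(\mw_m)>\varphi(\mw_m)$.
\item Convexity then propagates this decrease back to $\mw_1$.
\end{enumerate}
For $\ell=0$ one has $\lambda\ge 1$, so $\varphi$ is concave on the path, and you must start instead from the free end $\mw_1$. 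None of this is in your proposal, so as it stands it does not prove the lemma.

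\textbf{How the paper avoids all of this.} It rearranges differently: it folds the pendant path at $\mw_1$. It keeps $\varphi(\mw_1)$ on the new top rung vertex, places $\varphi(\mw_2)$ directly below it, and lets $\varphi(\mw_3),\dots,\varphi(\mw_m)$ continue as the new pendant path along the bottom row. No edge is destroyed: the edge $\mw_1\mw_2$ becomes the new rung, and the tail keeps its boundary contributions. The only change is that $\mv_{2,\ell}$ and $\mw_2$ each lose one Dirichlet neighbour and become adjacent. Hence the energy drops by exactly
\[
\varphi(\mw_2)^2+\varphi(\mv_{2,\ell})^2-\bigl(\varphi(\mw_2)-\varphi(\mv_{2,\ell})\bigr)^2=2\varphi(\mw_2)\varphi(\mv_{2,\ell})>0 .
\]
This uses nothing but Perron--Frobenius positivity: no monotonicity and no comparison of eigenvector values.

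Your transplant keeps the new cluster in the same position inside $\mL_n$. But deleting $\mw_m$ and reusing its value elsewhere is precisely what creates the boundary mismatch at $\mw_{m-1}$, and it is this that forces the extra estimates above.
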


\begin{proof}
The proof is based on a rearrangement argument using the variational characterization of $\llD(\mG_{\ell,m})$. By the Perron--Frobenius theorem, the eigenvector corresponding to $\llD(\mG_{\ell,m})$, unique up to scalar multiples, can be chosen strictly positive on each vertex of $\mG_{\ell,m}$; we fix such a choice of eigenvector $\psi \neq 0$.

Suppose that $\ell \geq 1$. Denote the value it takes at each vertex by $a_1,\ldots,a_\ell,b_1,\ldots,b_m > 0$ on $\mP_\ell \times \mP_2$, and by $a_{\ell+1},\ldots,a_{\ell+m}>0$ the values it takes on the pendant path $\mP_m$ (see \autoref{fig:rearrangement}).

\begin{figure}[ht]
\begin{tikzpicture}[scale=1, line cap=round, line join=round]
  \def\h{1.2}
  \def\n{4}

  \foreach \i in {0,...,\n} {
    \fill (\i,0) circle (1.5pt);
    \fill (\i,\h) circle (1.5pt);
  }

  \foreach \i in {0,...,\numexpr\n-1\relax} {
    \draw[thick] (\i,0) -- (\i+1,0);
    \draw[thick] (\i,\h) -- (\i+1,\h);
  }

  \foreach \i in {0,...,\n} {
    \draw[thick] (\i,0) -- (\i,\h);
  }

  \draw[thick] (\n,\h) -- (\n+1,\h) -- (\n+2,\h) -- (\n+3,\h);

  \fill (\n+1,\h) circle (1.5pt);
  \fill (\n+2,\h) circle (1.5pt);
  \fill (\n+3,\h) circle (1.5pt);

  \node at (1.5,-0.35) {$\cdots$};
  \node at (1.5,\h+0.35) {$\cdots$};

  \node at (\n+2,\h-0.35) {$\cdots$};

  \node[left] at (0,0) {$b_1$};
  \node[left] at (0,\h) {$a_1$};
  \node[below] at (\n,0) {$b_\ell$};
  \node[above] at (\n,\h) {$a_\ell$};
  \node[below] at (\n+3,\h) {$a_{\ell+m}$};

  \node[below] at (\n+1,\h) {$a_{\ell+1}$};

  \draw[decorate,decoration={brace,amplitude=4pt,mirror}]
    (-0.4,-0.8) -- (\n-0.1,-0.8);
  \node at (1.5,-1.2) {$\mP_\ell \times \mP_2$};

  \draw[decorate,decoration={brace,amplitude=4pt,mirror}]
    (\n+0.1,-0.8) -- (\n+3,-0.8);
  \node at (5.5,-1.2) {$\mP_m$};
\end{tikzpicture} \vspace{2 em} \\

\begin{tikzpicture}[scale=1, line cap=round, line join=round]
  \def\h{1.2}
  \def\n{5}

  \foreach \i in {0,...,\n} {
    \fill (\i,0) circle (1.5pt);
    \fill (\i,\h) circle (1.5pt);
  }

  \foreach \i in {0,...,\numexpr\n-1\relax} {
    \draw[thick] (\i,0) -- (\i+1,0);
    \draw[thick] (\i,\h) -- (\i+1,\h);
  }

  \foreach \i in {0,...,\n} {
    \draw[thick] (\i,0) -- (\i,\h);
  }

  \draw[thick] (\n,0) -- (\n+1,0) -- (\n+2,0) -- (\n+3,0);

  \fill (\n+1,0) circle (1.5pt);
  \fill (\n+2,0) circle (1.5pt);
  \fill (\n+3,0) circle (1.5pt);

  \node at (1.5,-0.35) {$\cdots$};
  \node at (1.5,\h+0.35) {$\cdots$};

  \node at (\n+2,-0.35) {$\cdots$};

  \node[left] at (0,0) {$b_1$};
  \node[left] at (0,\h) {$a_1$};
  \node[below] at (\n-1, 0){$b_\ell$};
  \node[above] at (\n-1, \h){$a_\ell$};
  \node[below] at (\n,0) {$a_{\ell+2}$};
  \node[above] at (\n,\h) {$a_{\ell+1}$};
  \node[below] at (\n+3,0) {$a_{\ell+m}$};
  \node[below] at (\n+1, 0) {$a_{\ell+3}$};

  \draw[decorate,decoration={brace,amplitude=4pt,mirror}]
    (-0.4,-0.8) -- (\n-0.1,-0.8);
  \node at (2.5,-1.2) {$\mP_{\ell+1} \times \mP_2$};

  \draw[decorate,decoration={brace,amplitude=4pt,mirror}]
    (\n+0.1,-0.8) -- (\n+3,-0.8);
  \node at (6.5,-1.2) {$\mP_{m-2}$};
\end{tikzpicture} 
\caption{Rearrangement concept in the proof of \autoref{lem:dirichletbox}. Above: the starting graph $\mG_{\ell,m}$, together with the values $a_1,\ldots,a_{\ell+m}, b_1, \ldots, b_\ell$ that the given eigenvector $\psi$ takes at each of the vertices of $\mG_{\ell,m}$. Below: the comparison graph $\mG_{\ell+1,m-2}$, together with the rearranged values of the test vector on $\mG_{\ell+1,m-2}$, constructed using the values of $\psi$.}
\label{fig:rearrangement}
\end{figure}

We rearrange the values of $\psi$ as illustrated in \autoref{fig:rearrangement}: the subpath starting at $a_{\ell+2}$ is reattached below the vertex corresponding to $a_{\ell+1}$. In particular, after rearrangement, the vertex with value $a_{\ell+2}$ becomes adjacent to the vertex with value $b_\ell$ on $\mP_\ell \times \mP_2$.

We claim that the resulting vector $\tilde\psi$ on $\mG_{\ell+1,m-2}$ has a strictly smaller Rayleigh quotient than $\psi$. Indeed, the $\ell^2$-norm of the two vectors is equal, since they take on exactly the same values; while the local contribution to the Dirichlet energy decreases, since
\[
(a_{\ell+2} - b_\ell)^2 < (a_{\ell+2} - 0)^2 + (b_\ell - 0)^2
\]
(where in all other cases the difference in neighboring values remains the same). Denoting by $R[\tilde\psi]$ the corresponding Rayleigh quotient, we thus have that
\begin{displaymath}
    \llD (\mG_{\ell+1,m-2}) \leq R[\tilde\psi] < R[\psi] = \llD (\mG_{\ell,m}).
\end{displaymath}
The case $\ell = 0$ is an easy adaptation of the above argument, and is omitted.
\end{proof}

\begin{cor}\label{cor:optimal-constellation}
    Let $m, \ell \in \mathbb N_0$ and $N\in \mathbb N$, such that $N=2\ell + m$.
    \begin{itemize} 
    \item[(i)] If $N=2n+1$ for some $n\in \mathbb N$, then
    \begin{gather*}
        \llD(\mG_{\ell,m}) \ge \llD(\mG_{n,1}),
    \end{gather*}
    with equality if and only if $m=1$ and $\ell=n$.
    \item[(ii)] If $N=2n$ for some $n\in \mathbb N$, then
    \begin{gather*}
        \llD(\mG_{\ell,m}) \ge \llD(\mG_{n,0}) = \llD(\mLn),
    \end{gather*}
    with equality if and only if $m=0$ and $\ell=n$.
    \end{itemize}
\end{cor}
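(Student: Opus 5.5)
The plan is to iterate \autoref{lem:dirichletbox}. Fix $N=2\ell+m$; the operation $(\ell,m)\mapsto(\ell+1,m-2)$ preserves $N$ and is admissible whenever $m\ge 2$. Starting from any pair $(\ell,m)$ with $m\ge 2$, we apply \autoref{lem:dirichletbox} repeatedly, obtaining the strictly decreasing chain
\[
\llD(\mG_{\ell,m})>\llD(\mG_{\ell+1,m-2})>\cdots>\llD(\mG_{\ell+j,m-2j}),
\]
and we stop at the first $j$ with $m-2j\in\{0,1\}$. Parity decides where we stop. If $N=2n+1$, then $m$ is odd and the chain terminates at $m-2j=1$, hence at $\ell+j=n$, i.e.\ at $\mG_{n,1}$. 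If $N=2n$, then $m$ is even and the chain terminates at $\mG_{n,0}$, which is the ladder $\mP_n\times\mP_2=\mLn$. This yields the two inequalities.

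For the equality cases, note that the only pair $(\ell,m)$ with $2\ell+m=N$ and $m\in\{0,1\}$ is $(n,1)$ in the odd case and $(n,0)$ in the even case. Any other admissible pair has $m\ge2$, so at least one step of the chain is taken, and the strict inequality in \autoref{lem:dirichletbox} makes the final inequality strict. Conversely, equality trivially holds for $(\ell,m)=(n,1)$, respectively $(n,0)$.

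The main point requiring care is the consistency of the boundary convention. Throughout the chain, each $\mG_{\ell',m'}$ must carry the boundary prescribed in \autoref{lem:dirichletbox}, namely the vertices $\mv_{2,\ell'},\mw_1,\dots,\mw_{m'}$, so that the lemma can be reapplied at every step. This includes the degenerate cases $\ell=0$ (a pure path), which the lemma covers via the omitted adaptation, and $m'=0$, where the boundary reduces to $\mv_{2,n}$. We also need the identification $\llD(\mG_{n,0})=\llD(\mLn)$ to be read with this boundary convention, i.e.\ as the Dirichlet energy of the ladder cluster inside the ambient graph, as the statement asserts. Once the convention is fixed, the argument is a finite induction on $m$ (or on $\lfloor m/2\rfloor$) with no further estimates.
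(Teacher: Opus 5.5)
Your argument is correct and takes the same approach as the paper. The paper gives no separate proof: the corollary is meant to follow by iterating \autoref{lem:dirichletbox} with $N=2\ell+m$ fixed, exactly as you do, with the parity of $m$ deciding whether the chain ends at $\mG_{n,1}$ or $\mG_{n,0}$, and the strictness of each step giving the equality characterization. Your caveat is also well placed: all Dirichlet energies, including $\llD(\mG_{n,0})=\llD(\mLn)$, must be read with the lemma's fixed boundary convention for the chain of inequalities to be consistent.
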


\begin{figure}[ht]
\centering
    \begin{tikzpicture}[scale=1, line cap=round, line join=round]
  \def\n{3} 
  \def\h{1.2}

  \foreach \i in {0,...,\n} {
    \fill (\i,0) circle (1.5pt);
    \fill (\i,\h) circle (1.5pt);
  }

  \foreach \i in {0,...,\numexpr\n-1\relax} {
    \draw[thick] (\i,0) -- (\i+1,0);
    \draw[thick] (\i,\h) -- (\i+1,\h);
  }

  \foreach \i in {0,...,\n} {
    \draw[thick] (\i,0) -- (\i,\h);
  }

  \draw[dashed] (\n, 0) -- (\n +1, 0);
  \draw[dashed] (\n, \h) -- (\n+1, \h);
  \draw[thick] (\n+1, 0) -- (\n+1, \h);

\fill (\n+1,0) circle (1.5pt);
\fill (\n+1, \h) circle (1.5pt);

  \node[left] at (0,-0.4) {$b_1$};
  \node[left] at (0,\h+0.1) {$a_1$};
  \node[right] at (\n+1,\h+0.1) {$a_{n}$};
  \node[right] at (\n+1,-0.4) {$b_{n}$};
\end{tikzpicture}
\qquad     \begin{tikzpicture}[scale=1, line cap=round, line join=round]
  \def\n{3} 
  \def\h{1.2}

  \foreach \i in {0,...,\n} {
    \fill (\i,0) circle (1.5pt);
    \fill (\i,\h) circle (1.5pt);
  }
  \fill (\n+1, \h) circle (1.5pt);
  \fill (\n+2, \h) circle (1.5pt);

  \foreach \i in {0,...,\numexpr\n-1\relax} {
    \draw[thick] (\i,0) -- (\i+1,0);
    \draw[thick] (\i,\h) -- (\i+1,\h);
  }

  \foreach \i in {0,...,\n} {
    \draw[thick] (\i,0) -- (\i,\h);
  }

  \draw[dashed] (\n, 0) -- (\n +1, 0);
  \draw[dashed] (\n, \h) -- (\n+1, \h);
  \draw[thick] (\n+1, \h) -- (\n+2, \h);
  \draw[thick] (\n+1, 0) -- (\n+1, \h);

\fill (\n+1,0) circle (1.5pt);
\fill (\n+1, \h) circle (1.5pt);

  \node[left] at (0,-0.4) {$b_1$};
  \node[left] at (0,\h+0.1) {$a_1$};
  \node[above] at (\n+1,\h+0.1) {$a_{n}$};
  \node[right] at (\n+1,-0.4) {$b_{n}$};
  \node[right] at (\n+2, \h+0.1) {$a_{n+1}$};
\end{tikzpicture}

\caption{The ladder $\mLn= \mG_{n,0} = \mP_n \times \mP_2$ ($N$ even) and $\mG_{n,1}$ ($N$ odd) are optimal in the sense of \autoref{cor:optimal-constellation}.}
\label{fig:two-step-ladder}
\end{figure}

We can now characterize all the spectral minimal $2$-partitions on $\mLn = \mP_n \times \mP_2$, $n \geq 2$, for the three energy functionals. We note that valid $2$-partitions, since they must be exhaustive, can only consist of cells of the form $\mG_{\ell,m}$, for $0 \leq m \leq n$ and $0 \leq \ell \leq \lfloor \frac{n}{2}\rfloor$.

\begin{itemize}
    \item Then, by \autoref{cor:optimal-constellation}, the unique spectral minimal partition in the Dirichlet sense depends on whether $n$ is even or odd. If $n=2m$ with $m\in \mathbb N$, then the spectral minimal partition consists of two copies of $\mG_{m,0} = \mL_m$. If $n=2m+1$, $m \in \N$, then the spectral minimal partition consists of two copies of $\mG_{m,1}$. See \autoref{fig:ladder-optimal-dirichlet}. 
    \item The spectral minimal partition in the boundaryless case consists of a division of $\mLn$ into two path graphs of $n$ vertices each; since there exist such partitions, they must be the only minimizers, by \autoref{lem:optimal-config-boundaryless}. Note, however, that there are two possible configurations: Either one cuts along the horizontal lines in $\mLn$ to create a partition consisting of two copies of $\mP_m$, or for $m\ge 2$ one can also divide the graph in two path graphs in an $L$-shape, where each cell is of the form $\mG_{1,m-2}$. See \autoref{fig:ladder-optimal-boundaryless}.
    
\begin{figure}[ht]
        \begin{tikzpicture}[scale=1, line cap=round, line join=round]
  \def\h{1.2}
  \def\n{2}
  \pgfmathsetmacro{\m}{2*\n+1}

  \foreach \i in {0,...,\m} {
    \fill (\i,0) circle (1.5pt);
    \fill (\i,\h) circle (1.5pt);
  }

  \foreach \i in {0,...,\numexpr\n-1\relax} {
    \draw[thick] (\i,0) -- (\i+1,0);
    \draw[thick] (\i,\h) -- (\i+1,\h);
    \draw[thick] (\i, 0) -- (\i, \h);
    \draw[thick] (\i+1, 0) -- (\i+1, \h);
  }

    \foreach \i in {\numexpr\n+1\relax,...,\numexpr2*\n\relax} {
    \draw[thick] (\i,0) -- (\i+1,0);
    \draw[thick] (\i,\h) -- (\i+1,\h);
    \draw[thick] (\i, 0) -- (\i, \h);
    \draw[thick] (\i+1, 0) -- (\i+1, \h);
  }

  \node[above] at (\n/2, \h) {$\cdots$};
  \node[below] at (\n/2,0) {$\cdots$};

  \node[above] at (2*\n, \h) {$\cdots$};
  \node[below] at (2*\n,0) {$\cdots$};
  
  \end{tikzpicture} \qquad \qquad 
          \begin{tikzpicture}[scale=1, line cap=round, line join=round]
  \def\h{1.2}
  \def\n{2}
  \pgfmathsetmacro{\m}{2*\n}

  \foreach \i in {0,...,\m} {
    \fill (\i,0) circle (1.5pt);
    \fill (\i,\h) circle (1.5pt);
  }

  \foreach \i in {0,...,\numexpr\n-2\relax} {
    \draw[thick] (\i,0) -- (\i+1,0);
    \draw[thick] (\i,\h) -- (\i+1,\h);
    \draw[thick] (\i, 0) -- (\i, \h);
    \draw[thick] (\i+1, 0) -- (\i+1, \h);
  }
  \draw[thick] (\n, 0) -- (\n+1, 0);
  \draw[thick] (\n, \h) -- (\n-1, \h);

    \foreach \i in {\numexpr\n+1\relax,...,\numexpr2*\n-1\relax} {
    \draw[thick] (\i,0) -- (\i+1,0);
    \draw[thick] (\i,\h) -- (\i+1,\h);
    \draw[thick] (\i, 0) -- (\i, \h);
    \draw[thick] (\i+1, 0) -- (\i+1, \h);
  }

  \node[above] at (\n/3, \h) {$\cdots$};
  \node[below] at (\n/3,0) {$\cdots$};

  \node[above] at (1.7*\n, \h) {$\cdots$};
  \node[below] at (1.7*\n,0) {$\cdots$};
  
  \end{tikzpicture} 
    \caption{The partitions consisting of the arrangements in \autoref{fig:two-step-ladder} are optimal in the Dirichlet case.}
    \label{fig:ladder-optimal-dirichlet}
\end{figure}

    \begin{figure}
    \begin{tikzpicture}[scale=1, line cap=round, line join=round]
  \def\h{1.2}
  \def\n{5}

  \foreach \i in {0,...,\n} {
    \fill (\i,0) circle (1.5pt);
    \fill (\i,\h) circle (1.5pt);
  }

  \foreach \i in {0,...,\numexpr\n-1\relax} {
    \draw[thick] (\i,0) -- (\i+1,0);
    \draw[thick] (\i,\h) -- (\i+1,\h);
  }

  \node[above] at (2.5, \h) {$\cdots$};
  \node[below] at (2.5,0) {$\cdots$};
  
  \end{tikzpicture}
  \qquad  \qquad    
  \begin{tikzpicture}[scale=1, line cap=round, line join=round]
  \def\h{1.2}
  \def\n{5}

  \foreach \i in {0,...,\n} {
    \fill (\i,0) circle (1.5pt);
    \fill (\i,\h) circle (1.5pt);
  }

  \foreach \i in {0,...,\numexpr\n-2\relax} {
    \draw[thick] (\i,0) -- (\i+1,0);
    \draw[thick] (\i+1,\h) -- (\i+2,\h);
  }
  \draw[thick] (0,0) -- (0, \h);
  \draw[thick] (\n,0) -- (\n, \h);

  \node[above] at (2.5, \h) {$\cdots$};
  \node[below] at (2.5,0) {$\cdots$};
  
  \end{tikzpicture}
  \caption{The partitions consisting of two path graphs or two $L$-shaped graphs provide the spectral minimal $2$-partition in the boundaryless case.}
  \label{fig:ladder-optimal-boundaryless}
\end{figure}
    \item In the Neumann case, we have $\noptenergy[k,\infty](\mLn) =\boptenergy[k,\infty](\mLn)$, since on the one hand $\noptenergy[k,\infty](\mG) \geq \boptenergy[k,\infty](\mG)$ in general, see \autoref{prop:comparison-bn}, but on the other, it can be seen directly that $\llN(\mG_{0,n})=\llB(\mG_{0,n})$, that is, in $\mL_n$, the horizontal path graphs $\mP_n$ have the same Neumann as boundaryless eigenvalue, and thus continue to form a minimal partition. However, in the Neumann case, this is the \emph{only} minimal partition. Indeed, by \cite[Theorem~1.1]{ShiYu25}, $\llN(\mG_{1,n-2}) > \llB(\mG_{1,n-2})= \llB(\mP_n)$ since an eigenvector of the Neumann problem on the L-shaped subgraph $\mG_{1,n-2}$ cannot coincide with the eigenvector in the boundaryless case due to the existence of a vertex in the vertex boundary $\delta \mV_{1,n-2}$ that is adjacent to more than a single vertex of the subgraph $\mG_{1,n-2}$.
\end{itemize}

We leave it as an open question whether graphs $\mG$ exist such that, for some $k$ and some $p$, $\noptenergy[k,p](\mG)$ is attained on a partition that is different from any minimizer of both $\doptenergy[k,p](\mG)$ and $\boptenergy[k,p](\mG)$.

\subsection{The infinite case} We now consider the case $n=\infty$. We first observe that under our standing assumptions the partition problems are always well posed.

\begin{prop}
    Take $\mL_\infty = \Z \times \mP_2$ to be the infinite two-ladder. If $\nu$ is finite, that is, $\nu (\mV(\mL_\infty)) < \infty$, and $L(\mL_\infty) < \infty$ (see \autoref{cor:geometric-condition-for-ucc}), then for all $k \geq 2$ and all $1 \leq p \leq \infty$ the functionals $\doptenergy[k,p](\mL_\infty)$, $\boptenergy[k,p](\mL_\infty)$ and $\noptenergy[k,p](\mL_\infty)$ each admit a minimizing $k$-partition.
\end{prop}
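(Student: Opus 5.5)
The plan is to reduce all three claims to the existence theorems already established, namely \autoref{thm:dir-existence}, \autoref{thm:existence-bdryless} and \autoref{thm:existence-neumann}. We only need to check that the infinite ladder $\mL_\infty$ with finite $\nu$ and finite total length meets their hypotheses.

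\emph{Dirichlet case.} Since $L(\mL_\infty)<\infty$, \autoref{cor:geometric-condition-for-ucc}.\eqref{item:fin-leng} gives \autoref{assum:unif-can-com}. By \autoref{prop:hierarchy} (with $\nu$ finite), the graph is then canonically compactifiable and $h^1(\mV)$ embeds compactly into $\ell^2_\nu(\mV)$. Any admissible $D(\dForm)$ is a closed subspace of $h^1(\mV)$, so the form domain in \eqref{eq:domdirform} also embeds compactly. Hence \autoref{lem:connected-not-restr} and \autoref{thm:dir-existence} apply and yield a connected minimizing $k$-partition for $\doptenergy[k,p](\mL_\infty)$, for every $p\in[1,\infty]$.

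\emph{Neumann and boundaryless cases.} \autoref{assum:unif-can-com} and \autoref{assum:neum-exist} both hold by \autoref{cor:geometric-condition-for-ucc}. What remains is the combinatorial hypothesis of \autoref{lem:classconn-closed}: any two distinct vertices of $\mL_\infty$ are joined by only finitely many non-contractible paths. I expect this to be the main point requiring care, since $\mL_\infty$ has infinitely many independent cycles, so \autoref{cor:geometric-condition-for-ucc}.\eqref{item:fin-cycl} does not apply.

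I would argue directly. Label the vertices $a_i,b_i$ for $i\in\Z$, with rungs $\{a_i,b_i\}$. Let $\gamma$ be a non-contractible path from $\mv$ to $\mw$, and let $I$ be the bounded index interval spanned by the two endpoints. Suppose $\gamma$ visits a column $i$ outside $I$, say with $i$ larger than both endpoint indices, and take $i$ maximal. The path must reach column $i$ from column $i-1$ and return to column $i-1$. Since it is simple, it enters along one rail, say at $a_i$, then uses the rung to $b_i$ and leaves along the other rail; it cannot leave along the same rail because $a_i$ has only one neighbour in column $i-1$. Consequently $\gamma$ contains both $a_{i-1}$ and $b_{i-1}$. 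These are adjacent in $\mL_\infty$ but not consecutive in $\gamma$, which contradicts non-contractibility. The only exception would be if $\gamma$ passes along $a_{i-1},b_{i-1}$ consecutively as well, but then $b_{i-1}$ would have degree three in $\gamma$, which is impossible.

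So every non-contractible path stays within the finitely many columns indexed by $I$, possibly enlarged by one column on each side. The set of such paths is therefore finite, and \autoref{lem:classconn-closed} applies.

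\autoref{thm:existence-bdryless} then gives a minimizer for $\noptenergy[k,p](\mL_\infty)$, and \autoref{thm:existence-neumann} gives one for $\boptenergy[k,p](\mL_\infty)$. The restriction $k\ge 2$ only ensures that clusters are proper subsets, as the energies presuppose. When checking the column argument, the delicate step is the endpoint case where the extremal column $i$ contains $\mv$ or $\mw$ itself; there the same reasoning bounds the excursion by one extra column.
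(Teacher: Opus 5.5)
Your proposal is correct and follows the same route as the paper. Dirichlet existence comes from \autoref{thm:dir-existence} via the compact embedding, and the Neumann and boundaryless cases come from \autoref{thm:existence-bdryless} and \autoref{thm:existence-neumann}, once the finiteness of non-contractible paths required by \autoref{lem:classconn-closed} is checked. The paper only asserts that finiteness, so your column argument usefully fills the gap; since $i$ is taken beyond both endpoint indices, it actually shows that no non-contractible path leaves the columns spanned by the endpoints, so the extra-column and endpoint caveats are unnecessary.
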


\begin{proof}
    The requirements of \autoref{thm:dir-existence} apply, hence, the spectral minimizing $k$-partitions exist for all $k\ge 2$. In order to apply \autoref{thm:existence-bdryless} and \autoref{thm:existence-neumann} we need to verify the requirements of \ autoref {lem: classconn-closed} additionally. In fact, there exist in
    $\mL_\infty = \mathbb Z \times \mP_2$ only finitely many non-contractible paths between any two vertices, hence \autoref{lem:classconn-closed} applies, and each of the problems $\doptenergy[k,p](\mL_\infty)$, $\boptenergy[k,p](\mL_\infty)$ and $\noptenergy[k,p](\mL_\infty)$ admit a minimizing $k$-partition.
\end{proof}

\begin{rem}
(1)   On the other hand, if on $\mL_\infty = \Z \times \mP_2$ we take all vertex weights and all edge weights to be $1$, then the quantities \eqref{eq:llb} (boundaryless case) and \eqref{eq:lambda2-rayl-chung-2} (Neumann case) are not well-defined on any cluster containing infinitely many vertices (of which there must be at least one in any partition). Indeed, it is clear from a simple test function argument that on any such cluster the respective infima in \eqref{eq:llb} and \eqref{eq:lambda2-rayl-chung-2} are both zero; however they will not be attained, that is, there is no minimum (since for the energy to be zero, the function must be constant, hence not square summable).
    
(2)    In the Dirichlet case, the problem is similar: on an infinite graph 
    one can still define $\llD$ to be the infimum of the spectrum; however, this infimum may be zero and may not be attained by any element of $\ell^2$. But there is a further problem: if we accept this broader definition of $\llD$, then on $\mL_\infty$ we will have that $\doptenergy[k,p](\mL_\infty) = \inf \denergy[k,p](\mathcal P) = 0$ for all $k \geq 2$ and all $1 \leq p \leq \infty$, but the infimum over all partitions, $0$, will only be attained for $k=2$.
    
    Namely, suppose for a contradiction that there exists a $k$-partition attaining $\doptenergy[k,p](\mL_\infty)$ for $k\ge 3$, then at least one cluster will either be finite or consist
    
    of a semi-infinite path graph, by which we mean a path graph of infinite length with one degree-one vertex.  However, the first eigenvalue of any finite set is strictly positive, while for any semi-infinite path graph $\mP$ we have $\degout(\mw)\ge 1$  (where $\degout$  was defined in \eqref{eq:degout-def}) for each vertex in $\mP$. By the variational characterization we have $$\lambda_1^{\mathrm D}(\mP_\infty )\ge \min_{\mw\in \mP_\infty} \degout(w) \ge 1. $$
    Since the infimum over all partitions is clearly zero (take the partition $\mathcal P$ to consist of two semi-infinite two-ladder graphs, of the form $\mG_{\infty,0}:= \mathbb N \times \mP_2$, separated by $k-2$ finite ladders $\mL_n$ each of depth $n$: then $\llD(\mL_n)\searrow 0$ as $n\to \infty$, and so does $\denergy[k,p](\parti)$), no partition for $k\ge 3$ can attain the spectral spectral minimal energy $\doptenergy[k,1](\mL_\infty)=0$. This phenomenon is reminiscent of the case of spectral minimal partitions of infinite metric graphs (see \cite[Example~5.1]{HoKeSe23}). 

    For $k=2$, spectral minimal partitions exist (in the generalized sense where $\llD$ is an infimum) for every $1 \le p \le \infty$, but are not unique; again, cf.\ \cite[Example~5.1]{HoKeSe23}. In fact, any $2$-partition is minimal if each of its two clusters contains a semi-infinite ladder graph $\mG_{\infty,0}$ as a subgraph; in our running notation, such clusters may be denoted by $\mG_{\infty,m}$: for each $m \in \N_0$ we will have a different minimizing partition.
    
    To see that such partitions are minimal, by domain monotonicity for each of the induced subgraphs $\mG_{\infty,m}$ of an element of the partition, we then infer
    \begin{gather*}
        \llD(\mG_{\infty,m}) \le \llD(\mG_{\infty,0}) \le \lim_{n\to \infty} \frac{4}{2n}=0,
    \end{gather*}
    {
    where, in the variational argument, we choose characteristic functions on embedded subgraphs $\mG_{n,0}$ in $\mL_\infty$ as test functions.}

\end{rem}

\newcommand{\etalchar}[1]{$^{#1}$}

\end{document}